\begin{document}

\title[\null]
{ On Fano threefolds with semi-free ${\mathbb C}^*$-actions, I }

\author[\null]{Qilin Yang and Dan Zaffran}

\address{ Department of Mathematics,
Sun Yat-Sen University, ~510275, ~ Guangzhou, ~P. R.  CHINA.} %
\email{yqil@mail.sysu.edu.cn}

\address{Department of Mathematical Science,
Dan Dahle Building, 101,
Florida Institute of Technology
150 W. University Blvd
Melbourne, Florida 32901, U. S. A.} %
\email{dzaffran@fit.edu}

\keywords{Fano threefold, algebraic action, Hamiltonian action, moment map, Morse theory, holomorphic Lefschetz formula, equivariant localization}
  \subjclass[2010]{14J45, 32M05, 53C55, 53D20, 57R20}

\renewcommand{\labelenumi}{(\Alph{enumi})}
\newcommand{\C}{\mathbb C}
\newcommand{\F}{\mathbb F}
\newcommand{\Pj}{\mathbb P}
\newcommand{\Z}{\mathbb Z}
\newcommand{\R}{\mathbb R}
\newcommand{\fk}{\mathfrak{k}}
\newcommand{\fm}{\mathfrak{m}}
\newcommand{\cf}{\mathrm{cf.}}
\newcommand\grad{\rm grad}
\newcommand\lgr{\longrightarrow}
\newcommand\rw{\rightarrow}
\newcommand\lmp{\longmapsto}
\newcommand\lie{{\mathscr L}{\rm ie}}
\newcommand\Lm{\Lambda}
\newcommand\lmd{\lambda}
\newcommand\al{\alpha}
\newcommand\dt{\delta}
\newcommand\G{\Gamma}
\newcommand\eg{e.g.}
\newcommand\ie{i.e.}
\newcommand\resp{\rm resp.}
\newcommand\Hom{\rm Hom}
\newcommand\ov{\overline}
\newcommand\bt{\beta}
\newcommand\om{\omega}
\newcommand\df{\rm d}
\newcommand\bi{\bar{i}}
\newcommand\bj{\bar{j}}
\newcommand\bl{\bar{l}}
\newcommand\bk{\bar{k}}
\newcommand\bp{\bar{p}}
\newcommand\gd{\rm grad}
\newcommand\lge{\langle}
\newcommand\rg{\rangle}

\newtheorem{theorem}{Theorem}[section]
\newtheorem{proposition}[theorem]{Proposition}
\newtheorem{corollary}[theorem]{Corollary}
\newtheorem{lemma}[theorem]{Lemma}
\newtheorem{example}[theorem]{Example}
\newtheorem{remark}[theorem]{Remark}
\newtheorem{definition}[theorem]{Definition}

\begin{abstract}
Let $X$ be a Fano threefold and $\C ^* \times X\rightarrow X$ an algebraic action. Fix a maximal compact subgroup $S^1$ of ${\mathbb C}^*.$ Then
$X$ has
a $S^1$-invariant K\"ahler structure and the corresponding  $S^1$-action
admits an equivariant moment map which is at the same time a perfect Bott-Morse function. We will initiate a program to classify the Fano threefolds with semifree ${\mathbb C}^*$-actions using the Morse theory and  holomorphic Lefschetz fixed point formula as the main tools. In this paper we give a complete list
of all possible  Fano threefolds without the interior isolated fixed points for any  semifree ${\mathbb C}^*$-actions. Moreover when the actions whose fixed point sets have only two connected components and a few of the rest cases, we give the realizations of  the semifree $\C^*$-actions.
\end{abstract}
\maketitle
\baselineskip=10pt

\section{introduction}
Orlik and Wagreich gave a complete classification of smooth projective algebraic surfaces with $\C^*$-actions in  \cite{ow77}.
Building on work of  Audin, in \cite{ka} Karshon gave a complete classification of Hamiltonian circle actions on compact $4$-manifolds, proved that each of those  $4$-manifolds admits a compatible, invariant K\"ahler structure. More precisely,  compact symplectic $4$-manifolds with Hamiltonian circle actions are in fact obtained by equivariant symplectic blow ups at fixed points of the complex ruled surfaces. This classification is highly similar to  the projective algebraic surfaces with $\C^*$-actions  in  \cite{ow77}.
The followed question is to give a  classification of Hamiltonian circle actions on compact $6$-manifolds. In \cite{lh} Li decided the possible types of fixed point set of a semifree Hamiltonian circle action on the $6$-manifold with the second Betti number less than $3.$
 The general classification  is  difficult since we lack a complete classification of smooth, simply-connected, compact $6$-manifolds. An easier question is the classification of the projective algebraic $3$-folds (they are real $6$ dimensional complex algebraic varieties) with Hamiltonian circle actions.
The Mori theory of the extremal ray contractions is very powerful in the study of the
algebraic $3$-folds. Using this theory, Mori and Mukai\cite{mm1} gave a complete classification of Fano $3$-folds, building on Iskovskih's work about Fano $3$-folds of Picard number one.

Note that Fano manifolds are uniruled. A classification  of Fano $3$-folds with $\C^*$ actions is undoubtedly helpful in studying the  Hamiltonian circle actions on compact $6$-manifolds. Motivated by this crude observation,  we wish to deal with the simplest case-the classification of Fano $3$-folds with {\it semifree} $\C^*$-actions, in this and a following paper.

At the beginning of this work, we want to imitate the procedure  in  \cite{ow77} and \cite{ka} to give a classification of Fano $3$-folds with $\C^*$-actions. We had to give up after some attempt, mainly because of the complexity
 of birational surgery in higher dimension, partially due to the authors have no enough knowledge of the computational details in the paper \cite{mm1}.
In this paper we go a different way and use directly the classification results established by Iskovskih, Mori and Mukai. Suppose a Fano $3$-fold $X$ admits  a semifree $\C^*$-action, we could label its fixed point data by different types. Use the Morse theory for the moment map as established in \cite{fr59},\cite{cs79},\cite{at82},
we could read the Hodge numbers from the assumed types of fixed point data. Then use the holomorphic Lefschetz fixed point formula and  Atiyah-Bott- Berline-Vergne
localization theorem,  we could get more global numerical  invariants of $X$ from its local fixed point data. These numerical characters will greatly narrow the range the Fano manifolds which have $\C^*$ actions, and as a result we can study them case by case and finally get a complete classification.

 The holomorphic Lefschetz fixed point formula and  Atiyah-Bott-Berline-Vergne
localization theorem combine the local information near the fixed submanifolds with the global topological and analytical invariants of $X.$ The difficult is to decide the  local information near the fixed submanifolds.
The following observations are starting point of our work.
Assume $X$ is a Fano manifold with an effective semifree  $\C^*$-action.
Choose a  $S^1$-invariant K\"ahler structure  on $X,$
since $X$ is simply connected we may further assume that $S^1$-action is Hamiltonian. Then it admits
an equivariant moment map $J:X\rw \R^1$ which is  a perfect Bott-Morse function \cite{fr59},\cite{at82}.
The critical points of $J$ are exactly
the fixed points sets $X^{S^1}=X^{\C^*}.$ The connected components of  $X^{S^1}$ are K\"ahler
 hence algebraic submanifolds
of $X.$
Let $r \in {\rm im}(J)$; then $J^{-1}(r)/S^1 = X_r$ is called the Marsden-Weinstein reduced space at $r.$ If
the action is semi-free, then, for any regular value $r,$ the reduced space $X_r$ is a smooth manifold; for a
singular value $r,$ for $X$ of arbitrary dimension it is not clear whether or not $X_r$ is a smooth manifold. However,
for a Fano $3$-fold $X$(it has real dimension $6$), the fixed point sets are the minimum, the maximum or of index $2$
or co-index 2. By \cite{gs89}, all the reduced spaces (including those on critical levels)
are smooth manifolds. Moreover, when the index $2$ and co-index $2$ fixed point sets
consist of surfaces, the reduced spaces are all diffeomorphic.  The natural diffeomorphism between the reduced spaces is
induced by the gradient flow of the Morse function, hence there is a well defined diffeomorphism between the reduced spaces.
 Note the circle action is the restriction of the holomorphic $\C^*$-action, the Marsden-Weinstein reduced spaces $X_a$ are  K\"ahler manifolds by \cite{hl94}.
 Since $X$ is a Fano $3$-fold,  $X_a$ are also Fano surfaces by \cite{fut}. Therefore all $X_a$ are del Pezzo surfaces. As a result, if there are index $2$ and co-index $2$ isolated fixed points, the transformations of diffeomorphic type of
the reduced spaces are  compositions of blow ups or blow downs between del Pezzo surfaces.
  In section \ref{sect5}, we  prove that on a
pointwise fixed complex algebraic surface  the moment map will take maximum or minimum; and if $C$ is a pointwise fixed complex algebraic curve
   on it  the moment map takes maximum or minimum, then  $C$ is a rational curve. These observations provide enough information for the computations of the contributions of fixed  submanifolds in the holomorphic Lefschetz fixed point formula and  Atiyah-Bott-Berline-Vergne
localization formula, and make it possible for us to study the classification problem case by case since Fano manifolds of complex dimension no more than two have very few possibilities.

The fixed points where the moment map does not take maximum or minimum are called interior fixed points in this paper. In section \ref{sect3} the connected components of  interior fixed points are classified into three types: fixed curves, isolated fixed point of isotropic weight  type $(1,2)$, isolated fixed point of isotropic weight type $(2,1).$ In this paper we will give a classification of  Fano $3$-folds with semifree $\C^*$-actions which have no interior isolated fixed points. In a following paper we will deal with the rest cases, i.e., the actions have at least one interior isolated fixed point.

\begin{theorem} Let $X$ be a Fano $3$-fold equipped with an effective, semifree, and algebraic $\C^*$-action. Suppose that the fixed point set $X^{\C^*}$ has no  no interior isolated fixed points. If $X^{\C^*}$  has two connected components, then $X$ is listed in Table $5;$ if $X^{\C^*}$  has at least three connected components, then $X$ is $P^1\times P^1\times P^1,~~~ P^1\times\widetilde{P^2}$ or
 a possible one in the rest cases in Table $6.$
\end{theorem}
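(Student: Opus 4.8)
\smallskip
\noindent\emph{Outline of the proof.} The plan is to run the Bott--Morse theory of the moment map together with equivariant localization to reduce the problem to a finite numerical search, to match the surviving numerical types against the Iskovskih--Mori--Mukai classification of Fano threefolds, and finally to settle realizability.

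First I would fix an invariant K\"ahler form with moment map $J\colon X\rw\R$ for the circle action; it is a perfect Bott--Morse function whose critical set is $X^{\C^*}$, and each of its fibers, in particular the minimum $F_-$ and the maximum $F_+$, is connected. Semifreeness forces the normal bundle of every fixed component $F$ to split holomorphically as $N^+_F\oplus N^-_F$ with $\C^*$ acting by weight $+1$ and $-1$ on the two summands, and the Morse index of $F$ equals $2\,\mathrm{rk}\,N^-_F$; hence $F$ is the source $F_-$ ($N^-=0$), the sink $F_+$ ($N^+=0$), or an interior component, and under our hypothesis Section \ref{sect3} says that every interior component is a smooth fixed curve with index and coindex $2$. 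Perfectness gives $b_1(X)=b_1(F_-)$ and $b_5(X)=b_1(F_+)$, which vanish, so a one-dimensional $F_\pm$ must be $\Pj^1$; moreover the reduced space just above the minimum is biholomorphic to $\Pj(N^+_{F_-})$, which by the cited results is a del Pezzo surface, so $F_-$ is a point (and $\Pj(N^+_{F_-})=\Pj^2$), a line $\Pj^1$ (and $\Pj(N^+_{F_-})$ is a Hirzebruch surface $\F_m$ with $m\le1$), or a del Pezzo surface, and symmetrically for $F_+$; by Section \ref{sect5} the pointwise fixed surfaces that can occur are exactly the del Pezzo ones.

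Consequently $(\dim F_-,\dim F_+)$, the number of interior fixed curves, and the distribution of the critical values are confined to finitely many combinatorial types; when $X^{\C^*}$ has exactly two components there are no interior curves, and the remaining dimension patterns are $(0,2),(2,0),(1,1),(1,2),(2,1),(2,2)$. In each type the Bott--Morse spectral sequence computes the Betti and Hodge numbers of $X$ from the fixed data, in particular $\rho(X)=b_2(X)$ and $b_3(X)$. Since the action is holomorphic and $X$ is Fano, $H^q(X,\mathcal O_X)=0$ for $q>0$, so the holomorphic Lefschetz number of every $t\in\C^*$ equals $1$; writing this, and the analogue for $-K_X$ and a few of its powers, through the Atiyah--Bott fixed point formula produces rational identities in $t$ whose apparent poles must cancel, and this cancellation determines the Todd genera of the components, relations among the Chern numbers of the $N^\pm_F$, the value $\chi(\mathcal O_X)=1$, and, via the Atiyah--Bott--Berline--Vergne localization of the equivariant classes $1$ and $c_1(X)$, the anticanonical degree $(-K_X)^3=\int_X c_1(X)^3$. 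Because every reduced space is del Pezzo, crossing an interior fixed curve changes the reduced surface by exactly one point-blow-up or blow-down, which bounds how $\rho$ and $(-K_X)^3$ can vary along $J$. Feeding these numerical data into the classification of Iskovskih, Mori and Mukai leaves only finitely many candidate threefolds in each combinatorial type.

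It remains to decide realizability. For two components, when $F_-$ and $F_+$ are both surfaces the limit maps $x\mapsto\lim_{t\to0}t\cdot x$ and $x\mapsto\lim_{t\to\infty}t\cdot x$ exhibit $X$ as a $\Pj^1$-bundle $\Pj(\mathcal O_S\oplus L)$ over a del Pezzo surface $S\cong F_-\cong F_+$ carrying the fiberwise $\C^*$, so one records which pairs $(S,L)$ make $X$ Fano; the mixed patterns are realized by explicit coordinate and scaling actions on $\Pj^3$, $Q^3$, $\Pj^1\times\Pj^2$, $V_5$ and the like, and assembling these gives Table $5$. For three or more components there is at least one interior fixed curve, the product actions on $\Pj^1\times\Pj^1\times\Pj^1$ and on $\Pj^1\times\widetilde{\Pj^2}$ realize the simplest allowed types, and the remaining numerically admissible candidates are collected in Table $6$. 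The main obstacle is exactly this last step: proving, for each numerically surviving $X$, whether a semifree $\C^*$-action of the predicted shape exists (and is unique up to the obvious symmetries), and, in the multi-component case, deciding which chains of del Pezzo reduced surfaces linked by blow-ups and blow-downs are genuinely compatible with the Fano condition on the total space --- which is why the three-or-more-component list is given only up to the ``possible'' entries of Table $6$.
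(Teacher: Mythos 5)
Your skeleton is the paper's (perfect Bott--Morse theory for the moment map, Carrell--Sommese to read Hodge numbers off the fixed locus, the holomorphic Lefschetz formula for $\mathscr O_X$, ABBV localization for $(-K_X)^3$, then the Iskovskih--Mori--Mukai tables), but the step you rely on to control how the invariants vary along $J$ is wrong, and it replaces the actual engine of the classification. Crossing an interior fixed \emph{curve} does not blow the reduced surface up or down: a component of weight type $(1,1)$ has index and coindex $2$, and by Guillemin--Sternberg the reduced spaces on either side of (and on) that critical level are all diffeomorphic; blow-ups and blow-downs of the reduced space occur only when one crosses interior \emph{isolated} fixed points of type $(1,2)$ or $(2,1)$, which are excluded by hypothesis here. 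What does change across a fixed curve is the Euler class of the circle bundle $P_r\rw X_{red}$, namely $e(P_{c+\epsilon})=e(P_{c-\epsilon})+[C_1]+\cdots+[C_k]$. The Diophantine system that actually cuts the possibilities down to Tables 5 and 6 comes from (i) this Euler-class bookkeeping between the values of $e_\pm$ forced near $F_\pm$ by localization, (ii) the adjunction formula for $[C_k]$ on the fixed del Pezzo surface $X_{red}$, which determines the genera $g_k$, and (iii) the relation $\alpha_k^++\alpha_k^-=\int_{X_{red}}[C_k]^2$ feeding back into the Lefschetz identities. None of this is in your outline, and the blow-up/blow-down mechanism you substitute for it does not produce these constraints.

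There are also concrete errors on the realizability side. You claim the mixed two-component patterns are realized by actions on $Q^3$ and $V_5$; in fact the paper proves that the smooth quadric threefold admits \emph{no} semifree $\C^*$-action, and that the natural $\C^*$-actions on the Mukai--Umemura threefolds are not semifree, so neither appears in Table 5 (the surviving entries are $\Pj^3$, $\Pj^1\times\Pj^2$, the blow-up of $\Pj^3$ along a line, and $\Pj^1$-bundles over del Pezzo surfaces). You also silently drop the patterns $(\dim F_-,\dim F_+)=(0,0)$ and $(0,1)$; these must be ruled out explicitly ($X$ would be $S^6$ in the first case, and in the second the reduced space would have to be simultaneously $\Pj^2$ and a Hirzebruch surface). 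Finally, many numerically surviving candidates are killed in the paper not by comparing ``chains of del Pezzo surfaces'' but by normal/tangent bundle exact-sequence computations along the fixed curves and by descending the action through equivariant blow-downs; your last step would need those arguments to close the cases.
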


The paper is organized as follows. In Section \ref{sect2}  we  introduce some basic notions of algebraic
 transformation group, and display the relations among ruled manifolds, Fano manifolds and algebraic manifolds with $\C^*$-actions.
In Section \ref{sect3}, using the isotropic representation we give a classification of the types of the fixed point submanifolds  of algebraic threefolds with semifree $\C^*$-action.
In Section \ref{sect4}, we prove the  homology  and cohomology groups with integer coefficient of a Fano $3$-fold $X$ with $\C^*$-actions are torsion free and the fixed surfaces are del Pezzo surfaces. We give the expression of the Hodge number of $X$ form those of its fixed submanifolds.
In Section \ref{sect5}, we compute the contributions of the fixed submanifolds of different types  to the  holomorphic Lefschetz fixed point formula and  Atiyah-Bott-Berline-Vergne
localization formula. In Section \ref{sect6}, we give a classification of Fano $3$-folds with semifree $\C^*$-actions whose fixed point set has only two connected components. Finally in Section \ref{sect7}, we give a classification of Fano $3$-folds with semifree $\C^*$-actions whose interior fixed point set consists of algebraic curves.

\section{ $\C^*$ actions on  general projective $3$-folds}\label{sect2}
\setcounter{section}{2} \setcounter{equation}{0}

Let $G$  be an algebraic group and $X$ an algebraic variety. A group action  $\sigma:G\times X\rw X$ is  called an algebraic action if $\sigma$ is a morphism between algebraic varieties.
 Usually we denote $\sigma(g,x)=g\cdot x$ for brevity. Then
 $g\cdot(h\cdot x)=(gh)\cdot x$ and $1\cdot x=x$
  for any $g,h\in G$ and $x\in X.$ Here $``1"$ denotes the unit of $G.$
 The action is called {\it effective} if $g\cdot x=x$  for all $x\in X$
then $g=1.$ Throughout this paper we  only study effective group actions. For any $x\in X,$ the closed subgroup $G_x=\{g\in G|g\cdot x=x \}$ is called the isotropy group at $x$ and there is a naturally induced linear representation of $G_x$ on the tangent space $T_x X,$ called {\it isotropic representation}. Let $\C^*$ be the multiplicative group of complex number field $\C,$ it is the simplest reductive algebraic group.
In this paper we will only consider algebraic $\C^*$-actions on complex algebraic varieties if without special explanations.

A $\C^*$-action on an complex affine variety $V={\rm Spec} (A)$ is equivalent to a grading of the
ring $A.$ If $A$ is graded ring
we can find homogeneous generators of the $\C^*$-algebra $A,$ say $x_1, \cdots, x_n.$ The homomorphism
$\phi: \C [X_1,\cdots, X_n]\rw A$ defined by $\phi(X_j)=x_j$ defines an embedding of $X$ in $\C^n.$ Let $m_j$ be the
degree $x_j.$ Then define an action of $\C^*$ on $\C^n$ by
\begin{equation} t\cdot (z_1,\cdots,z_n)=(t^{m_1}z_1,\cdots,t^{m_n}z_n).\label{action}\end{equation}
This action on $\C^n$ leaves $V$ invariant. The following Lemma was also used in \cite{ow77}.

\begin{lemma}\label{affine} Suppose that $\C^*$ acts effectively on an affine variety $V.$ Then there exists an affine variety
$W$ and a Zariski open subset $U$ of $V$
such that $U$ is equivariantly isomorphic to $W\times \C^*,$ where $\C^*$ acts on $W\times \C^*$ by translations
on the second factor.
\end{lemma}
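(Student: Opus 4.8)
The plan is to exhibit a $\C^*$-invariant Zariski-open subset of $V$ on which there is a globally defined semi-invariant regular function of weight exactly $1$; inverting such a function splits the coordinate ring as a Laurent polynomial ring over its weight-zero part, which is precisely the desired product decomposition. Throughout, write $A=\bigoplus_{m\in\Z}A_m$ for the coordinate ring of $V$ with the $\Z$-grading induced by the action, so that in the notation of \eqref{action} the generator $x_j$ lies in $A_{m_j}$.

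First I would record that effectiveness forces the weights that actually occur to be setwise coprime: if $t_0\in\C^*$ satisfies $t_0^m=1$ for every $m$ with $A_m\neq 0$, then $t_0$ acts trivially on $A$ and hence on $V$, so $t_0=1$; this is impossible unless $\gcd(m_1,\dots,m_n)=1$. Consequently one can pick finitely many nonzero homogeneous elements $f_1,\dots,f_k\in A$, of degrees $d_1,\dots,d_k$, together with integers $a_1,\dots,a_k$ such that $\sum_{i} a_id_i=1$; one may simply take the $f_i$ to be (enough of) the $x_j$.

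Next, put $g=f_1\cdots f_k$ and $U=\{x\in V:g(x)\neq 0\}$. As $g$ is homogeneous, $U$ is a $\C^*$-invariant Zariski-open subset of $V$, and $U\neq\emptyset$ because $g\neq 0$ in the domain $A$. Its coordinate ring is the localization $A_g=A[1/g]$, again a $\Z$-graded domain, in which each $f_i$ is invertible; hence $u:=f_1^{a_1}\cdots f_k^{a_k}\in A_g$ is a unit of degree $1$ (after possibly replacing $u$ by $u^{-1}$). Multiplication by $u^m$ maps $(A_g)_0$ bijectively onto $(A_g)_m$, so every homogeneous $h$ of degree $m$ is of the form $(hu^{-m})u^m$ with $hu^{-m}\in(A_g)_0$; therefore $A_g=(A_g)_0[u,u^{-1}]$. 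Furthermore $u$ is transcendental over $B:=(A_g)_0$: a relation $\sum_m c_mu^m=0$ with $c_m\in B$ is a sum of elements lying in distinct graded pieces, so every $c_mu^m$ vanishes and, multiplying by $u^{-m}$, so does every $c_m$. Thus $A_g\cong B\otimes_\C\C[u,u^{-1}]$ as graded $\C$-algebras, with $B$ concentrated in degree $0$ and $\deg u=1$.

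It remains to see that $W:=\mathrm{Spec}(B)$ is an affine variety and that the decomposition is $\C^*$-equivariant. Expressing finitely many $\C$-algebra generators of $A_g$ as Laurent polynomials in $u$ with coefficients $b_{ij}\in B$, the subalgebra generated by the $b_{ij}$ together with $u,u^{-1}$ already equals $A_g$, whence $B$ is generated over $\C$ by the $b_{ij}$; being also a subring of the domain $A_g$, the ring $B$ is a finitely generated domain and $W$ is an irreducible affine variety. Finally the isomorphism $U=\mathrm{Spec}(A_g)\cong\mathrm{Spec}(B)\times\mathrm{Spec}(\C[u,u^{-1}])=W\times\C^*$ is $\C^*$-equivariant, since the coaction on $A_g$ is trivial on the degree-zero part $B$ and acts on the degree-one generator $u$ with weight $1$, which is exactly the translation action on the second factor. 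The one step that is not routine bookkeeping is the construction of the weight-one unit $u$ on an invariant open set, that is, the passage from coprimality of the occurring weights to the existence of a single weight-one semi-invariant regular on a nonempty invariant open subset; once the localization $A_g$ has been introduced, together with the finite-generation observation guaranteeing that $W$ is a genuine affine variety, the lemma follows.
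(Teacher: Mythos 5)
Your proof is correct, and it rests on the same key mechanism as the paper's: effectiveness forces the occurring weights to be coprime, which yields an invertible semi-invariant $u$ of weight exactly $1$ on a nonempty invariant open subset, and inverting it splits the coordinate ring as a Laurent ring $B[u,u^{-1}]$ over its weight-zero part $B$. Where you differ is in the route to that open subset. The paper first invokes Sumihiro's equivariant embedding theorem to place (an open piece of) $V$ inside $\C^n$ with a linear diagonal action, works out the splitting over the big torus $\{z_1\cdots z_n\neq 0\}$ there, and then descends to $V$ by observing that the graded ideal $I$ of $V$ satisfies $I=\oplus_j I_0 Z^j$. You instead argue intrinsically on the graded coordinate ring $A$ of $V$: you choose homogeneous elements $f_1,\dots,f_k$ of $A$ itself whose degrees generate $\Z$, localize at their product $g$, and verify directly that $(A_g)_0$ is a finitely generated domain. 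This buys two things: it avoids the appeal to Sumihiro's theorem entirely, and it sidesteps a small gap in the paper's version, namely that the locus $\{z_1\neq 0,\dots,z_n\neq 0\}$ could a priori miss $V$ if $V$ lies in a coordinate hyperplane, whereas your $U=\{g\neq 0\}$ is nonempty by construction because $g$ is a nonzero element of the domain $A$. The only implicit hypothesis you use, as does the paper, is that $V$ is irreducible, so that $A$ is a domain; your finite-generation argument for $B=(A_g)_0$, which the paper leaves tacit, is also a worthwhile addition.
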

\begin{proof} Firstly assume that $V=\C^n$ is a complex vector space, and the action is given
as (\ref{action}). In this case, we take
$$U=\{(z_1,\cdots,z_n)|z_1\not=0,\cdots,z_n\not=0\}={\rm Spec}(\C[z_1,\cdots,z_n,z_1^{-1},\cdots,z_n^{-1}]).$$
Define a grading of
$A=\C[z_1,\cdots,z_n,z_1^{-1},\cdots,z_n^{-1}]$
 by degree $z_i=m_i.$
Let $R$ be the subalgebra
consists of elements of degree zero and $W={\rm Spec} R$. Since the action is effective, ${\rm g.c.d}(m_1,\cdots,m_n)=1.$ Hence there exist
integers $a_1,\cdots,a_n$ such that $a_1m_1+\cdots+a_n m_n=1.$ Let $Z=z_1^{a_1}\cdots z_n^{a_n},$ then
$A=R[Z,Z^{-1}],$ hence $U=W\times \C^*$ in this case.

In the case of general affine variety $V$, by the embedding theorem of Sumihiro \cite{sum74}, there is a Zariski open subset which
is equivariantly embedded in $\C^n.$ So we may assume $V\subset \C^n$ is an affine subvariety cut out by an ideal
$I\subset R[Z,Z^{-1}].$
Naturally it has grading, and $I=\oplus_{j\in\Z} I_j=\oplus_{j\in \Z}I_0 Z^j.$ Let $W={\rm Spec}(R/{I_0}),$ then
$V=W\times \C^*.$
\end{proof}

\begin{theorem}\label{rule} Suppose that $X$ is nonsingular 3-fold
 equipped with an algebraic
$\C^*$-action. Then there exist a smooth projecitve surface $S$, a 3-fold $Y$ with an algebraic $\C^*$ action, and equivariant morphisms $\varphi_1,\varphi_2,$
$$
\xymatrix {Y\ar[d]^{\varphi_1}\ar[dr]^{\varphi_2}\\
X\ar@{.>}[r]^f & S\times P^1}
$$
such that
\begin{enumerate}
\item
the $\C^*$-action on $S\times P^1$ is given by $t\cdot (y,[z_0:z_1])=(y,[tz_0,z_1]);$

\item
$\varphi_1$ and $\varphi_2$ are equivariant;

\item
$\varphi_1$ is composed of blow ups whose
centers are the fixed points or invariant curves of the $\C^*$-action.
\end{enumerate}
\end{theorem}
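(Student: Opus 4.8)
The plan is to reduce to the affine-local picture supplied by Lemma~\ref{affine} and then globalize using Sumihiro's equivariant completion and resolution together with the theory of $\C^*$-quotients. First I would invoke Sumihiro's theorem to cover $X$ by $\C^*$-invariant affine charts, and on each chart apply Lemma~\ref{affine} to see that a dense invariant open set $U_i \subset X$ is equivariantly isomorphic to $W_i \times \C^*$ with $\C^*$ acting by translation on the second factor. The key consequence is that, away from a proper invariant closed subset, the $\C^*$-action on $X$ is \emph{free} and its orbit space is (birationally) a surface $S$; concretely the rational quotient map $X \dashrightarrow S$ is the map we want to complete to $f: X \dashrightarrow S \times \Pj^1$. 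The second coordinate $\Pj^1$ records the $\C^*$-orbit itself together with the two limit points $\lim_{t\to 0} t\cdot x$ and $\lim_{t\to\infty} t\cdot x$, which by properness of $X$ always exist and land in the fixed locus; this is exactly what pins down the weight-$1$ linear action $t\cdot[z_0:z_1] = [tz_0:z_1]$ on the $\Pj^1$-factor.

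Next I would make this birational map into a morphism by equivariant resolution of indeterminacy. Since everything in sight is $\C^*$-equivariant, one can resolve the base locus of $f$ by a sequence of blow-ups with $\C^*$-invariant smooth centers (equivariant resolution of singularities / principalization in characteristic zero, or more elementarily by repeatedly blowing up the invariant subvarieties where $f$ fails to be defined and appealing to Noetherian termination). Call the result $Y$, with $\varphi_1 : Y \to X$ the composite of these blow-ups and $\varphi_2 : Y \to S \times \Pj^1$ the resolved morphism; by construction $\varphi_1$ and $\varphi_2$ are equivariant and the triangle commutes. One should also check $S$ can be taken smooth and projective: the generic orbit space is a smooth quasi-projective surface by the freeness above, and one completes and resolves it, pulling the completion back through $\varphi_2$ if necessary. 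The centers of the blow-ups constituting $\varphi_1$ are supported on the indeterminacy locus of $f$, which is invariant, hence is a union of fixed points and invariant curves (an invariant subvariety of a $\C^*$-threefold of dimension $\le 1$ that is not pointwise fixed is an invariant curve, whose two limit points are fixed); this gives assertion~(3).

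The main obstacle will be assertion~(3) in its precise form — controlling the centers of $\varphi_1$ so that they are genuinely fixed points or \emph{smooth} invariant curves rather than arbitrary invariant subschemes. Abstract equivariant principalization gives invariant centers but not automatically the geometric description wanted; one has to argue that after each blow-up the remaining indeterminacy locus, being $\C^*$-invariant and of dimension at most one inside a smooth threefold, decomposes into a finite set of fixed points and finitely many invariant curves, and that blowing these up (first the points, then the — now possibly singular — curves, whose strict transforms become smooth) terminates. A secondary subtlety is ensuring the target is exactly $S \times \Pj^1$ with the stated product action and not merely a $\Pj^1$-bundle over $S$: here one uses that the $\C^*$-action trivializes the bundle, because the two sections given by the $t\to 0$ and $t\to \infty$ limits are disjoint invariant divisors, and a ruled surface (fiberwise, a $\Pj^1$-bundle over $S$) with two disjoint sections and a fiberwise linear $\C^*$-action of weight one is equivariantly the trivial bundle. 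Once these points are handled, collecting $\varphi_1$, $\varphi_2$, the commuting triangle, and the equivariance finishes the proof.
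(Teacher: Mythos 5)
Your proposal follows essentially the same route as the paper: use Lemma~\ref{affine} (via Sumihiro) to exhibit a dense invariant open set equivariantly isomorphic to $S_0\times\C^*$, complete it to a dominant equivariant rational map $f:X\dashrightarrow S\times P^1$, observe that the codimension-$\ge 2$ indeterminacy locus consists of fixed points and invariant curves (with singular points of such curves themselves fixed), and resolve by equivariant blow-ups terminating by Hironaka. The points you flag as obstacles (smoothness of centers, triviality of the $P^1$-factor) are handled in the paper exactly as you sketch, the latter being immediate since the product structure already comes from Lemma~\ref{affine}.
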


\begin{proof}
By Lemma \ref{affine}, there is an open invariant subset $U\subset X$ equivariantly isomorphic
to $S_0\times \C^*,$ where $S_0$ is a smooth surface. Therefore there is
an open embedding $f:U\rw S\times P^1,$ where $S$ is a
smooth completion of $S_0.$ Hence $f$
defines a dominant rational map $f:X\dashrightarrow S\times P^1$ which is equivariant relative to the $\C^*$ action. The set $I(f)$ of points of indeterminacy of $f$
is an algebraic subset of $X$ of codimension at least two, hence $I(f)$ is the union  of finite
number  of points  and finite number of curves. If $x\in I(f)$ is an isolated point then it is a fixed point,
otherwise $f$ is defined at $x$ by formula $f(x)=t^{-1}\cdot f(t\cdot x).$ If $x\in C\subset I(f)$ where
$C$ is curve, by the same reason that $t\cdot x\in C,$ hence $C$ is invariant. Note that the singular points
are also fixed, therefore we may assume that $C$ is smooth.
Let $X_0=X.$ Define inductively $f_i:X_i\rw X_{i-1}$ to be the blow ups along the fixed points and
invariant curves located in the indeterminacy  of $f\circ f_1\circ\cdots\circ f_{i-1}.$
By Hironaka's theorem on resolution of the indeterminacies of rational maps \cite{hir64}, there is an integer $k$ such that $X_k$ is smooth
and $f\circ f_1\circ\cdots\circ f_k$ is defined on $X_k.$ Let $Y=X_k,$
$\varphi_1=f_1\circ\cdots\circ f_k$ and $\varphi_2=f\circ\varphi_{1}.$
Note $f_i$ and $f$ are equivariant and hence so are
$\varphi_1$ and $\varphi_2.$
\end{proof}

\begin{proposition}\label{rul}Let $X$ be nonsingular projective $3$-fold which admits  an effective $\C^*$-action, then there is a rational curve   though a general point of $X.$
\end{proposition}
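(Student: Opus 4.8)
The plan is to deduce Proposition \ref{rul} directly from Theorem \ref{rule}. Given the effective $\C^*$-action on the nonsingular projective $3$-fold $X$, Theorem \ref{rule} produces a smooth projective surface $S$, a $3$-fold $Y$, and equivariant morphisms $\varphi_1\colon Y\rw X$, $\varphi_2\colon Y\rw S\times P^1$, where $\varphi_1$ is a composition of blow ups along fixed points and invariant curves, and $f\colon X\dashrightarrow S\times P^1$ is a dominant equivariant rational map. The key observation is that $S\times P^1$ is covered by the rational curves $\{s\}\times P^1$, so $Y$ (being birational to $S\times P^1$ via $\varphi_2$, or at least dominating it generically finitely — actually $\varphi_2$ is the resolution map so $Y\dashrightarrow S\times P^1$ is birational, hence $Y$ is a rational-curve-covered variety wherever $\varphi_2$ is an isomorphism) carries a rational curve through its general point.

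First I would argue on $Y$. Since $\varphi_2\colon Y\rw S\times P^1$ is an equivariant morphism that factors the birational map $f$, and since $\varphi_1$ is a composition of blow ups (hence birational), the composite $f = \varphi_2\circ\varphi_1^{-1}$ is birational; therefore there is a Zariski-dense open subset $V\subset S\times P^1$ over which $\varphi_2$ restricts to an isomorphism onto its preimage. Shrinking $V$ so that it meets the ruling fibers $\{s\}\times P^1$ in dense opens, each such fiber pulls back under $\varphi_2^{-1}$ to a rational curve in $Y$ meeting the dense open $\varphi_2^{-1}(V)$. Hence through a general point of $Y$ there passes a rational curve.

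Next I would push forward to $X$ via $\varphi_1$. Because $\varphi_1$ is birational (a composition of blow ups), there is a dense open $X^\circ\subset X$ over which $\varphi_1$ is an isomorphism; its preimage $\varphi_1^{-1}(X^\circ)$ is dense in $Y$, so it meets the dense family of rational curves just constructed. A rational curve $R\subset Y$ meeting $\varphi_1^{-1}(X^\circ)$ maps to $\varphi_1(R)\subset X$, which is either a point or a rational curve, and which meets $X^\circ$; by choosing $R$ through a general point of $Y$ lying over a general point of $X$ we ensure $\varphi_1(R)$ is a (possibly singular, but irreducible) rational curve through that general point of $X$ — its normalization is $P^1$, which is what ``rational curve'' means here. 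This proves that a rational curve passes through a general point of $X$.

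The only subtle point, and the one I would spell out carefully, is the claim that the family of rational curves in $Y$ obtained by transporting the ruling of $S\times P^1$ actually sweeps out a dense subset rather than accumulating on a proper closed set; this follows from the fact that $\varphi_2$ is dominant (indeed generically finite of degree one) so the images of the ruling fibers cover a dense open of $S\times P^1$, whose full preimage is dense in $Y$. A secondary point is making sure the generic fiber $\{s\}\times P^1$ is not contracted or mapped to a point by $\varphi_2^{-1}$ — but since $\varphi_2$ is birational this cannot happen for general $s$. Everything else is a formal consequence of birationality of $\varphi_1$ and $\varphi_2$, so I do not expect a genuine obstacle; the content is entirely contained in Theorem \ref{rule}.
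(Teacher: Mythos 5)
Your argument is correct, but it is a genuinely different route from the one the paper takes. The paper never invokes Theorem \ref{rule} here: it argues directly that for any non-fixed point $x$ the orbit $\C^*\cdot x$ is a constructible set whose boundary $\overline{\C^*\cdot x}\setminus\C^*\cdot x$ is a finite set of fixed points, checks that the limits as $t\rw 0$ and $t\rw\infty$ exist, and concludes that the orbit closure itself is a rational curve; since $X^{\C^*}$ is a proper closed subset, these orbit closures pass through every general point. Your proof instead transports the ruling of $S\times P^1$ back through the birational model of Theorem \ref{rule}, which is legitimate and essentially shows the stronger statement that $X$ is ruled (hence uniruled). Two remarks on the comparison. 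First, your argument secretly produces the same curves: the fibers $\{s\}\times P^1$ are exactly the orbit closures for the action $t\cdot(y,[z_0:z_1])=(y,[tz_0:z_1])$, and $f$ is equivariant, so their strict transforms are orbit closures in $X$; the paper's proof reaches this conclusion without Sumihiro's embedding theorem or Hironaka's resolution of indeterminacy, and it works verbatim in any dimension, whereas Theorem \ref{rule} is stated for threefolds. Second, a small point you should make explicit: the statement of Theorem \ref{rule} does not assert that $\varphi_2$ (equivalently $f$) is birational, only that $\varphi_1$ is a composition of blow-ups; your sentence ``since $\varphi_2$ factors the birational map $f$\ldots the composite is birational'' presupposes what it should justify. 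The birationality is true, but you need to extract it from the \emph{proof} of Theorem \ref{rule}: there $f$ restricts to an open embedding of the dense invariant open $U\cong S_0\times\C^*$ into $S\times P^1$, and an open embedding between irreducible varieties of the same dimension has dense image, so $f$ is birational. With that supplied, the remaining steps (Zariski-density of the locus where $\varphi_2$ is an isomorphism, general fibers not contracted, pushing forward a curve that meets the isomorphism locus of $\varphi_1$) are all sound.
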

 \begin{proof}
By Chevalley's theorem, the orbit  $\C^*\cdot x$ at any point $x\in X$ is a constructible set, hence the closure $\overline{\C\cdot x}$ is an algebraic subset.  Now assume $x$ is not a fixed point. Then $\overline{\C\cdot x}-\C\cdot x$ is a $\C^*$-invariant subset consists of lower dimensional orbits, therefore the algebraic subset $\overline{\C\cdot x}-\C\cdot x$ is a finite set  and it is pointwise fixed by $\C^*$-action.  We claim that the limits $\lim_{t\rw 0}t\cdot x$ and $\lim_{t\rw \infty }t\cdot x$ must exist. In fact, the action morphism $\C^*\times X\rw X$ yields an algebra homomorphism:
$$\C[X]\rw \C[\C^*\times X]=\C[\C^*]\otimes \C[X],$$
for $f\in \C[X],$ we may write
$$f(t\cdot x)=\sum_{i=1}^m\phi_i(t)\psi_i(x),$$
where $\phi_1,\cdots,\phi_m\in \C[\C^*]$ and $\psi_1,\cdots,\psi_m\in \C[X].$ If one of the limits $\lim_{t\rw 0}t\cdot x$ and $\lim_{t\rw \infty}t\cdot x$ does not exist, then there exist series $\{t^1_i\},\{t^2_i\}\subseteq \C^*$ with $\lim_{i\rw \infty}t^1_i=\lim_{i\rw \infty}t^2_i,$ but the limits $\lim_{i\rw \infty}t^1_i\cdot x$ and $\lim_{i\rw \infty}t^2_i\cdot x$ are different.
Since $\overline{\C\cdot x}-\C\cdot x$ is a finite set, there exist $\epsilon_0>0$ such that
$$\lim_{i\rw\infty}|f(t^1_i\cdot x)-f(t^2_i\cdot x)|\geq \epsilon_0,$$
but$$\lim_{i\rw\infty}|\sum_{k=1}^m(\phi_k(t^1_i)\psi_i(x)-\phi_k(t^1_i)\psi_i(x))|=0,$$ this is a contradiction.

Hence $\overline{\C\cdot x}$ is
 isomorphic to a rational curve if $x$ is not a fixed point.  Since the fixed point set $X^{\C^*}$ is an algebraic subset of $X$
and the action is effective, $X^{\C^*}$ is a proper algebraic subset of $X.$ Hence there is a rational curve passing through a generic point of $X.$
\end{proof}

Recall that a projective variety $X$ of dimension $n$ is called {\it ruled} ({\rm resp.} {\it uniruled}) if there is a variety $Y$ of dimension
$n-1$ and a birational ({\rm resp.} {\it dominant rational}) map $Y\times P^1\dashrightarrow X.$
 By definition a projective variety is uniruled then there is a rational curve through a general point. If $X$ is irreducible and  $X-X^{\C^*}$ is a finite union of $\C^*$-invariant affine open subsets, by Luna's slice theorem, each invariant affine open subset is ruled and hence $X$ is ruled.
 We conjecture that any nonsingular projective variety equipped with  an effective $\C^*$-action is ruled.

A projective manifold $X$ with ample canonical line bundle
$K_X$ is of general type, it admits no effective $\C^*$
actions since its automorphism group is a finite group. If the  anticanonical line bundle
of $X$  is ample, it
is called a {\it Fano} manifold.  A projective variety is called {\it rationally connected}  if there is rational curve through a general pair of points.
Kollar, Miyaoka, Mori   \cite{kmm92} and Campana \cite{cm} proved that nonsingular Fano varieties are rational connected, in particular
they are uniruled.
However not every Fano manifold admits an effective $\C^*$-action. Fano surfaces have the following
complete lists: (1) $P^2;$ (2) Analytic $P^1$ bundles over $P^1;$
(3) (Denoted by $S_n$) The blow up of $P^2$ at $9-n$ points ($1\leq n \leq 7$) in general positions.
Clearly the first two classes and  $S_7$ in the case (3) admit effective $\C^*$
actions. However when $n\leq 6,$
the automorphism groups ${\rm Aut}_{\mathscr O}(S_n)=1,$
in particular $S_n$ have no nontrivial $\C^*$-actions when $n\leq 6.$

It is an interesting problem to classify those Fano manifolds which admit effective $\C^*$-actions.
Since on the one hand it is very hard to classify all fano manifolds as the dimensions increasing, those
have continuous symmetry naturally deserve more attentions;
 on the
other hand the study of $\C^*$-actions on Fano manifolds give us numerous examples which are urgently
needed in the study of Hamiltonian actions on symplectic manifolds. In the rest of this paper we will
only consider  the semifree actions of $\C^*$ on Fano $3$-folds.

\section{ Types of fixed points for a semifree $\C^*$ action on a threefold}\label{sect3}
Assume $X$ is a Fano $3$-fold with an effective semi-free (free outside fixed point sets) $\C^*$-action.
Fix a circle subgroup $S^1$ of $\C^*,$ and choose a  $S^1$-invariant K\"ahler structure  on $X$
(it is always possible since $S^1$ is compact).
Since $X$ is simply connected we may further assume that $S^1$-action is Hamiltonian. Then it admits
a moment map $J:X\rw \R^1$ which is $S^1$-invariant and a perfect Bott-Morse function \cite{fr59},\cite{at82}.
The critical points of $J$ are exactly
the fixed points sets $X^{S^1}=X^{\C^*}.$ The connected components of  $X^{S^1}$ are K\"ahler
 hence algebraic submanifolds
of $X.$ Let $\{F_j\}$ be connected components of $X^{\C^*}$ and $X^{\pm}_j =\{x\in X|\lim_{t\rightarrow 0}t^{\pm}\cdot x\in F_j\}.$ Then we have
 $X=\cup X^+_j$ ({\rm resp.} $\cup X^-_j$), called   Bialynicki-Birula plus ({\rm resp.} minus) decomposition of $X$ \cite{bb}. The
 stable ({\rm resp.} unstable) submanifolds of $F_j$ defined by the gradient flows of $J$ coincides with $X^+_j$ ({\rm resp.} $X^-_j$).

For a fixed point $x$  in $ F_j,$ we consider the isotropic representation on  the tangent space $T_x X.$
It is a complex representation, and splits into direct sum of one dimensional irreducible representations. The tangent vectors of $T_x F_j$
are exactly the fixed vectors of the isotropic action (weighted zero vectors of the isotropic representation). The  vectors in the normal bundle $N_{F_j}X^{\pm}_j$
 are spanned by all vectors $v\in T_x X$ such that
for $t\cdot v=t^{\pm} v$ (weighted $\pm 1$ vectors of the isotropic representation).
 Let $\nu^0_j$ denote the complex dimension of $F_j$  and  $\nu^{\pm}_j$ denote the complex rank of $N_{F_j}X^{\pm}_j,$  then
     $$\nu^0_j +\nu^+_j +\nu^-_j =3.$$ We call $(\nu^-_j, \nu^+_j)$ the {\it weight type} of $F_j.$

If $\nu^+_j =3$ or $\nu^-_j =3$ then $x\in F_j$ is
 called an {\it elliptic} fixed point; if both $\nu^+_j$ and $\nu^- _j$
are nonzero then $x\in F_j$ is called a {\it hyperbolic} fixed point;
 if $\nu^0_j>0$ and one of  $\nu^+ _j, \nu^- _j$ is
zero then $x\in F_j$ is called a {\it parabolic} fixed point.
$2\nu^- _j$ is called {\it index} of $J$ as a Morse function at $F_j$.
Clearly the possible indices of the parabolic fixed points are $0,2,4.$ Hence the level sets $J^{-1}(v)$ is connected \cite{at82}
 and $J$ has a unique local minimum and a unique local maximum.

Let $F_{+}$ (resp. $F_-$) denote the connected component $F_j$  with $\nu^+_j =0$ (resp. $\nu^-_j =0$), and denote its weight type by $(n_-,0)$ with (resp. $(0,m_+)$ ).
 Then on $F_-,F_+$ the moment map $J$
take minimum and maximum respectively. Write $\C^*=S^1\times \R_{\geq 0},$
then the moment map is $S^1$-invariant and
 an increasing function along the $\R_{\geq 0}$-orbits \cite[the proof of Proposition 4.1]{yb}.  Hence the points in the
other components of $X^{\C^*}$ are all consisted of hyperbolic fixed points. In particular each connected component of $X^{\C^*}\setminus \{F_-\cup F_+\}$ is either an isolated point or a smooth algebraic curve.

Let $\{C_k|k=1,\cdots,a\}$ denote the set of fixed curves locating in $X^{\C^*}\setminus \{F_-\cup F_+\},$ they are called {\it interior fixed curves} since
on them the moment map take values between maximum and minimum.
 Then all points in the interior fixed curves $\{C_k\}$ have indices $2,$ hence their  weight type is $(\nu^-,\nu^+)=(1,1).$ The isolated fixed points locating in $X^{\C^*}\setminus \{F_-\cup F_+\}$
are called {\it interior fixed points}, and they are
   decomposed into two classes. One class consists of points with indices $2$, denoted by $\{x_i|i=1,\cdots b,\}.$ The other class consists of points with indices $4$, denoted by $\{y_j|j=1,\cdots, c\}$.  Hence $\{x_i\}$ have weight type
$(\nu^-,\nu^+)=(1,2)$ and $\{y_j\}$ have weight type $(\nu^-,\nu^+)=(2,1).$

To sum up, we have
\begin{proposition}\label{str} The fixed points set $X^{\C^*}$ has the following decomposition
$$X^{\C^*}=F_- \bigcup F_+ \cup \bigcup_{k=1}^a\{C_k\}\cup\bigcup_{k=1}^b\{x_i\}\cup\bigcup_{k=1}^c\{y_j\},$$
where on $F_-$ and $F_+$ the moment map take minimum and maximum respectively, $F_-$ has weight type $(0,m_+)$ with $m_+ =1,2$ or $3$  and $F_+$ has weight type $(n_-,0)$ with $n_- =1,2$ or $3;$
 the interior fixed curves $\{C_k\}$ has weight type $(1,1);$ the interior fixed points  $\{x_i\}$ and $\{y_i\}$ have weight type $(1,2)$ and weight type $(2,1)$ respectively.
\end{proposition}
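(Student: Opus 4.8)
The proof is essentially a synthesis of the isotropy analysis carried out above with the monotonicity of the moment map, so I would proceed as follows. First I would recall that at a fixed point $x\in F_j$ the isotropic action on $T_xX$ is a complex representation whose weight‑zero summand is $T_xF_j$ and whose weight $\pm 1$ summands span $N_{F_j}X^{\pm}_j$; this yields the numerical identity $\nu^0_j+\nu^+_j+\nu^-_j=3$. Since each $F_j$ is a smooth complex submanifold of complex dimension $\nu^0_j$, and the action is effective, no component can have $\nu^0_j=3$ (that would force $F_j=X$), so $\nu^0_j\le 2$ for every $j$; moreover $X^{\C^*}$ has finitely many components because $X$ is compact. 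Finally, because the isotropic representations are complex, the Morse--Bott index $2\nu^-_j$ and coindex $2\nu^+_j$ of $J$ at each $F_j$ are even, so in particular no critical submanifold has index or coindex $1$.

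Next I would use this evenness, which guarantees that all level sets $J^{-1}(v)$ are connected, together with the fact recalled above that $J$ is a moment map possessing a unique local minimum $F_-$ and a unique local maximum $F_+$. On $F_-$ the index $2\nu^-_-$ must vanish, for otherwise $J$ would decrease in some normal direction; hence the weight type of $F_-$ is $(0,m_+)$ with $m_+=3-\nu^0_-$, and by the previous paragraph $m_+\in\{1,2,3\}$. Applying the same argument to $-J$ gives $F_+$ weight type $(n_-,0)$ with $n_-\in\{1,2,3\}$.

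Finally I would show that every remaining component $F$ of $X^{\C^*}$ is hyperbolic. If $\nu^-(F)=0$ then $F$ has index $0$, i.e. $J$ attains a local minimum along $F$, and by uniqueness of the local minimum $F=F_-$; likewise $\nu^+(F)=0$ forces $F=F_+$. Thus $\nu^-(F)\ge 1$ and $\nu^+(F)\ge 1$, and combined with $\nu^0(F)+\nu^+(F)+\nu^-(F)=3$ only two options survive: $\nu^0(F)=1$ with $\nu^-(F)=\nu^+(F)=1$, so $F$ is a smooth curve of weight type $(1,1)$ and index $2$; or $\nu^0(F)=0$ with $\{\nu^-(F),\nu^+(F)\}=\{1,2\}$, so $F$ is an isolated point of weight type $(1,2)$ (index $2$) or $(2,1)$ (index $4$). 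Since $F\ne F_\pm$, the value $J(F)$ lies strictly between the minimum and the maximum, which is why these are called interior fixed loci. Naming the curves $C_1,\dots,C_a$, the weight‑$(1,2)$ points $x_1,\dots,x_b$ and the weight‑$(2,1)$ points $y_1,\dots,y_c$ then yields the asserted decomposition. The only step that is not pure bookkeeping is the uniqueness of the extremal components $F_\pm$, which rests on Atiyah's connectedness theorem for moment maps (available here precisely because all indices and coindices are even); everything else is the elementary case analysis of $\nu^0+\nu^++\nu^-=3$ under the parabolic/hyperbolic dichotomy.
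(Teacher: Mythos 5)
Your proposal is correct and follows essentially the same route as the paper: the count $\nu^0_j+\nu^+_j+\nu^-_j=3$ from the isotropy representation, evenness of all indices and coindices giving connected level sets and a unique local minimum and maximum (Atiyah), and then the elementary case analysis forcing every other component to be hyperbolic of type $(1,1)$, $(1,2)$ or $(2,1)$. The only cosmetic difference is that you exclude extra parabolic components via uniqueness of the local extrema, while the paper phrases the same exclusion through the monotonicity of $J$ along the $\R_{\geq 0}$-orbits; both rest on the same connectedness theorem.
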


\section{The Hodge numbers of the fixed submanifolds for a semifree $\C^*$ action on a Fano $3$-fold }\label{sect4}
Let $X$ be a Fano $3$-fold with a algebraic $\C^*$-action, as studied in Sect. \ref{sect3}. Write its $p$-th homology group as a sum of free subgroup and torsion (finite) subgroup:   $H_p(X,\Z)=\Z^{b_p}\oplus T_p,$ where $b_p$ is the $p$-th Betti number
 and $T_p$ is the torsion part of $H_p(X,\Z).$
From the universal coefficient theorem we have $H^{p}(X,\Z)=\Z^{b_p}\oplus T_{p-1}.$
Applying Poicar\'e duality we have $b_p=b_{6-p}$ and $T_p\cong T_{5-p}.$ Clearly $b_0=b_6=1,b_1=b_5=0$
and  $T_0=T_5=T_1=T_4=0,T_2\cong T_3.$
Therefore the integral cohomology of $X$ is decided as the following if we know the its $2$-homology group and $3$-th Betti number:
$$\begin{array}{rcl}
&&H^0(X,\Z)=H^6(X,Z)= \Z,\quad H^1(X,\Z)=H^5(X,Z)=0,\\
&&H^2(X,\Z)=\Z^{b_2},\quad H^3(X,\Z)=\Z^{b_3}\oplus T_2,\quad H^4(X,\Z)=\Z^{b_2}\oplus T_2.\\
\end{array}
$$
\begin{proposition}\label{torfree} Let $X$ be a Fano  $3$-fold equipped with algebraic $\C^*$-action such that the restricted
circle subgroup action is Hamiltonian, then $T_2=0. $ In particular, all  homology  and cohomology groups with integer coefficient of $X$ are torsion free.
\end{proposition}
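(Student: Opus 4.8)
The plan is to exploit the fact, already invoked in the excerpt, that the moment map $J\colon X\rw\R$ is a perfect Bott--Morse function (\cite{fr59},\cite{at82}), together with control of the \emph{integral} homology of each critical submanifold. The key observation is that rational perfection, plus torsion-freeness of the critical submanifolds, is enough to force the integral connecting homomorphisms in the Morse--Bott long exact sequences to vanish, so that the whole integral homology of $X$ is built up as a direct sum of free abelian groups.

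First I would check that every connected component $F_j$ of $X^{\C^*}$ has torsion-free integral homology. By Proposition \ref{str} each $F_j$ has complex dimension $0$, $1$ or $2$. Points and smooth projective curves are harmless (a smooth curve of genus $g$ has homology $\Z,\Z^{2g},\Z$), so the only real issue is a $2$-dimensional component. But again by Proposition \ref{str} a $2$-dimensional fixed component has weight type $(0,1)$ or $(1,0)$, hence is the minimum $F_-$ or the maximum $F_+$. Treat $F_-$: its normal bundle $N_{F_-/X}$ is a line bundle on which $S^1$ acts with weight $+1$ by semifreeness, so near $F_-$ the moment map has the quadratic normal form $J=\min J+\tfrac12|z|^2$ in the fibre coordinate $z$ and $S^1$ rotates the fibre of $N_{F_-/X}$ in the standard way. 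Consequently, for small $\epsilon>0$ the level $\min J+\epsilon$ is a regular value and $J^{-1}(\min J+\epsilon)$ is the unit circle bundle of $N_{F_-/X}$, on which $S^1$ acts freely, so the reduced space $X_{\min J+\epsilon}=J^{-1}(\min J+\epsilon)/S^1$ is canonically identified with $F_-$. Since the Marsden--Weinstein reductions of a Fano threefold are del Pezzo surfaces (\cite{hl94},\cite{fut}), $F_-$ is a del Pezzo surface; replacing $J$ by $-J$ (equivalently $t$ by $t^{-1}$) does the same for $F_+$. Del Pezzo surfaces are rational, hence simply connected, hence have torsion-free integral homology.

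With this in hand I would run the Morse--Bott induction over $\Z$. List the critical values $c_1<\cdots<c_N$ and set $X_i=J^{-1}((-\infty,c_i+\epsilon])$; I claim $H_*(X_i;\Z)$ is free abelian for all $i$. For the base case $X_1$ deformation retracts onto the unique minimum $F_-$, which is torsion-free by the previous step. For the inductive step, the negative normal bundle of the critical submanifold(s) $F$ at level $c_{i+1}$ is complex, hence oriented, so the Thom isomorphism gives $H_*(X_{i+1},X_i;\Z)\cong H_{*-\lambda}(F;\Z)$, a free abelian group. In the long exact sequence of the pair the connecting map $\partial\colon H_{*-\lambda}(F;\Z)\to H_{*-1}(X_i;\Z)$ is a homomorphism of free abelian groups which becomes zero after $\otimes\,\mathbb{Q}$ (because $J$ is perfect over $\mathbb{Q}$), and a homomorphism of torsion-free groups that vanishes rationally vanishes; hence $\partial=0$. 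So the long exact sequence breaks into short exact sequences $0\to H_*(X_i;\Z)\to H_*(X_{i+1};\Z)\to H_{*-\lambda}(F;\Z)\to 0$ with free quotient, and $H_*(X_{i+1};\Z)\cong H_*(X_i;\Z)\oplus H_{*-\lambda}(F;\Z)$ is free abelian. Taking $i=N$ gives that $H_*(X;\Z)$ is free abelian, so in particular $T_2=0$, and then the cohomology computations recorded before the statement give torsion-free (co)homology in all degrees.

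The step I expect to be the genuine obstacle is the identification of the $2$-dimensional fixed components with smooth symplectic reductions: one must be sure the relevant value is regular, that the local normal form of the moment map near a fixed component correctly realizes $J^{-1}(\min J+\epsilon)$ as the circle bundle of $N_{F_-/X}$ under semifreeness, and that the cited Fano-ness of reduced spaces indeed applies just above the minimum. Everything else is formal once the critical submanifolds are known to be torsion-free; the only deep input, the classical rational perfection of the moment map, is already taken for granted in the excerpt.
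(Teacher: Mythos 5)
Your proof is correct, but it takes a genuinely different route from the paper's. The paper disposes of the statement in one step by citing Carrell--Sommese's theorem that the Bialynicki-Birula decomposition induces an isomorphism $\bigoplus_j H_{p-2\nu^-_j}(F_j,\Z)\to H_p(X,\Z)$ of \emph{integral} homology, and then reads off torsion-freeness of $H_2(X,\Z)$ from the corresponding summands of the fixed components. You instead re-derive this integral splitting by hand: a Morse--Bott induction over $\Z$ in which the Thom isomorphism for the complex (hence oriented) negative normal bundles identifies the relative groups, and the connecting homomorphisms are killed by combining rational perfection of $J$ with torsion-freeness of the groups involved (a map of torsion-free groups that vanishes rationally vanishes). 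This is a valid and self-contained argument; its one extra obligation is to check that the critical submanifolds themselves have torsion-free integral homology, which forces you to prove that the two-dimensional extremal components are del Pezzo surfaces --- exactly the content of Proposition \ref{maxmin}, which the paper proves separately (and, strictly speaking, also needs here, since $H_2(F_-,\Z)$ appears as a summand of $H_2(X,\Z)$ in the Carrell--Sommese decomposition; the displayed formula in the paper's proof silently omits it). What the paper's approach buys is brevity and independence from perfection of $J$ over $\Q$; what yours buys is transparency about exactly where torsion could arise and a proof that does not rely on the integral (as opposed to field-coefficient) form of the Carrell--Sommese theorem. Your identification of $F_\pm$ with the reduced spaces just inside the extremal levels, via the equivariant normal form and semifreeness, is the same mechanism as in Proposition \ref{maxmin} and is sound.
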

\begin{proof}
In traditional Morse theory,
 $X$ is built up by attaching negative  normal bundles of critical submanifolds, which is equivalent to give $X$ a  Bialynicki-Birula  minus decomposition.
 In \cite{cs79} and \cite{fu79} the authors used  Bialynicki-Birula  plus decomposition to study the geometry of $\C^*$-manifolds. In this paper we use minus decomposition for notation convenient, though using either plus or minus decomposition cause no essential difference.
Let $\{F_j\}$ be the connected components of the fixed point
 of the $\C^*$-action with  weight type $(\nu^-_j, \nu^{+}_j).$ Carrell and Sommese \cite{cs79}
proved that there are isomorphisms of homology
$$\bigoplus_j H_{p-2\nu^-_j}(F_j,\Z)\longrightarrow H_p(X,\Z),$$
for any integer $p.$ Using the notions of Proposition \ref{str}, we have
$$H_2(X,\Z)\cong \bigoplus_{k=1}^a H_2(C_k,\Z)\oplus \Z^{b}$$
is torsion free. Therefore $T_2=0.$
\end{proof}

The following theorem give a diffeomorphic classification of some special $6$-dimensional manifolds by their numerical invariants.
\begin{theorem}\label{wal}(\cite[Theorem 5]{wall}).
 If two simply-connected $6$-dimensional symplectic manifolds
have no torsion homology, and if their second Stiefel-Whitney classes $w_2$ vanish and
they have the same cohomology ring and the same first Pontryagin class $p_1,$ then they are diffeomorphic.
\end{theorem}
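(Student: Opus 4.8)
The plan is to deduce the statement from Wall's handlebody analysis of simply-connected closed $6$-manifolds together with simply-connected surgery theory; the symplectic hypothesis is not actually needed, and I will only use that $M_0$ and $M_1$ are closed oriented smooth $6$-manifolds. A preliminary remark is that under the stated hypotheses the listed data is already a \emph{complete} set of invariants in the tautological sense: orientability gives $w_1=0$, the Wu formula in dimension $6$ gives $v_1=0$, $v_2=w_2$, $v_3=0$, so $w_2=0$ forces every Stiefel--Whitney class to vanish; and since $H^8=0$ the only possibly nonzero Pontryagin class is $p_1\in H^4\cong H^2$. Hence what must be proved is that $(H^\ast,\ \text{cup product},\ p_1)$ together with $w_2=0$ determines the \emph{diffeomorphism} type, and I would attack this in three steps.

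\emph{Step 1: handle reduction.} Fix a handle decomposition of each $M_i$. Since $\pi_1=0$, Smale's handle-trading and cancellation lemmas remove all handles of index $0,1,5,6$ except a single $0$-handle and a single $6$-handle. Poincar\'e duality together with torsion-freeness of $H_\ast$ then allows one to normalize so that the subhandlebody $M_i^{(2)}$ formed by the $0$-handle and the index-$2$ handles is the boundary connected sum $\natural^{r}(S^2\times D^4)$ with $r=b_2$, and dually so that the top piece formed by the index-$4$ handles and the $6$-handle is another copy of $\natural^{r}(S^2\times D^4)$. Here $w_2=0$ is exactly what kills the framing twists, which live in $\pi_1(SO(5))=\Z/2$, of the attaching circles of the $2$-handles, so that no twisted $D^4$-bundle summands appear. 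Thus each $M_i$ is recovered from $\#^{r}(S^2\times S^3)=\partial\big(\natural^{r}(S^2\times D^4)\big)$ by attaching a family of index-$3$ handles along a framed link and then capping with a standard top piece.

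\emph{Steps 2 and 3: homotopy model, normal data, surgery.} From a ring isomorphism $\theta\colon H^\ast(M_1)\xrightarrow{\ \sim\ }H^\ast(M_0)$ I would construct a homotopy equivalence $f\colon M_0\to M_1$: for a simply-connected $6$-complex with cells only in dimensions $0,2,3,4,6$ and torsion-free homology the $4$-skeleton is determined up to homotopy by $H^\ast$ with its cup products into degree $4$, and $\theta$ lifts over the top $6$-cell because the relevant attaching element in $\pi_5$ of the $4$-skeleton is governed by the trilinear form $\mu(x,y,z)=\langle xyz,[M_i]\rangle$, the residual ambiguity being eliminated by the spin and torsion-freeness hypotheses (see below). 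Since both manifolds are spin and $\theta$ matches $p_1$ --- equivalently the class $p_1/2\in H^4$ classifying a map to $B\mathrm{Spin}$ over a $6$-complex --- one gets $f^\ast\nu_{M_1}\cong\nu_{M_0}$ stably, so $f$ refines to a degree-one normal map that is normally cobordant to $\mathrm{id}_{M_1}$. Running this through the surgery exact sequence, the obstruction to improving the corresponding normal cobordism, rel its boundary $M_0\sqcup M_1$, to an $h$-cobordism lies in $L_7(\Z)=0$ and hence vanishes; the $h$-cobordism theorem, valid since $\dim=6\ge 5$ and $\pi_1=0$, then yields a diffeomorphism $M_0\cong M_1$. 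In Wall's original packaging the same end is reached directly from Step~1, by proving that the framed link in $\#^{r}(S^2\times S^3)$ carrying the $3$-handles is determined, up to handle slides and ambient isotopy, by $\mu$ and $p_1$.

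\emph{Main obstacle.} The delicate point is producing a genuine homotopy equivalence, rather than a mere abstract match of invariants, which forces one to know that the homotopy type of this restricted class of $6$-complexes is faithfully recorded by the cohomology ring. The crux is the attaching map of the top $6$-cell, an element of a homotopy group of the $4$-skeleton which a priori is \emph{not} pinned down by the cup form alone --- for instance $\pi_5$ of a wedge of low-dimensional spheres carries torsion summands --- and it is precisely the spin condition together with torsion-freeness of $H_\ast$ that removes this slack. A parallel difficulty is realizing the numerical identity of the $p_1$'s at the level of stable normal bundles, compatibly with the homotopy-equivalence data, so that the surgery machine actually applies. Once a normal map with trivial normal invariant has been produced the remainder is essentially formal, since the surgery obstruction group $L_7(\Z)$ vanishes and the $h$-cobordism theorem applies in this dimension.
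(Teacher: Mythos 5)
First, a structural point: the paper does not prove this statement at all --- it is quoted verbatim as Theorem 5 of Wall's paper [Wa66] and used as a black box, so there is no internal argument to measure yours against. Your preliminary observations are correct: the symplectic hypothesis is irrelevant (Wall's theorem concerns closed smooth simply-connected $6$-manifolds), orientability plus the Wu formula plus $w_2=0$ kill all Stiefel--Whitney classes, and $p_1$ is the only Pontryagin class in play. The three-step architecture you describe --- reduction to a bilayer of copies of $\natural^{r}(S^2\times D^4)$ glued through a collection of $3$-handles, construction of a homotopy equivalence from the ring isomorphism, and promotion to a diffeomorphism via normal data, the vanishing of $L_7(\Z)$, and the $h$-cobordism theorem --- is indeed the correct skeleton of Wall's original argument and of its later surgery-theoretic repackagings (Jupp, Zhubr). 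One small slip in Step 1: the framings of the attaching circles of the $2$-handles live in $\pi_1(SO(4))$, not $\pi_1(SO(5))$, though both are $\Z/2$ and the point you are making (that $w_2=0$ excludes the twisted $D^4$-bundle summands) is right.

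As a proof, however, the proposal is incomplete, and the incompleteness sits exactly where you place your ``main obstacle''; flagging the hard lemmas is not the same as proving them. (i) Realizing a ring isomorphism $\theta$ by a homotopy equivalence requires showing that the attaching element of the top cell in $\pi_5$ of the $4$-skeleton is pinned down by the trilinear form together with the spin and torsion-freeness hypotheses. Since $\pi_5$ of a wedge of $2$- and $3$-spheres carries $\Z/2$ summands (powers of $\eta$) and Whitehead products, controlling this element is the bulk of Wall's computation, and your sketch simply asserts that the ambiguity ``is eliminated.'' (ii) Matching $w_2$ and $p_1$ does not by itself force the normal invariant of $f$ in $[M_1,G/O]$ to vanish: agreement of the stable normal bundles only kills its image in $[M_1,BO]$, leaving a residue coming from $[M_1,G]$, most visibly a potential component in $H^6(M_1;\Z/2)\cong\Z/2$ that is invisible to characteristic classes. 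One must dispose of this either by modifying $f$ or by observing that this component maps isomorphically onto the Kervaire obstruction in $L_6(\Z)=\Z/2$ and therefore cannot occur for a normal invariant arising from a homotopy equivalence of closed manifolds; that step is absent. In short, you have produced an accurate and well-informed roadmap to Wall's theorem, not a proof of it; for the purposes of this paper the appropriate ``proof'' remains the citation.
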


Since $-K_X$ is a positive line bundle, by Kodaira's vanishing theorem,
             $$H^k(X,{\mathscr O}_X)=H^{0,k}\cong H^{k,0}=0, \quad k=1,2,3.$$
By Hodge decomposition theorem and Serre duality theorem, the Dolbeault cohomology groups and de Rham cohomology groups are respectively given by

$$\begin{array}{rcl}
&& H^{1,3}=H^{3,1}=H^{2,3}=H^{3,2}=0,\\
&&H^2(X,\C)=H^{1,1}, \quad H^3(X,\C)=H^{1,2}\oplus H^{2,1}, \quad  H^4(X,\C)=H^{2,2}\cong H^{1,1}.
\end{array}
$$
Therefore
$$b_2 =h^{1,1},\quad b_3 =2h^{1,2}.$$

\begin{proposition}\label{maxmin}
Suppose that a  connected component $S$ of $X^{\C^*}$ is a surface,
then it is a Fano surface. Moreover the moment map takes
maximum or minimum on $S.$ In particular, the number of complex
$2$-dimension components in $X^{\C^*}$ is not more than $2.$
\end{proposition}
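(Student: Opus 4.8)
The plan is to analyze the isotropic representation along $S$ together with the Bia\l ynicki-Birula decomposition and the earlier structural results. By Proposition~\ref{str}, a $2$-dimensional connected component $S\subseteq X^{\C^*}$ has $\nu^0=2$, so $\nu^-+\nu^+=1$; thus $S$ is a parabolic fixed component with weight type $(1,0)$ or $(0,1)$. It therefore carries a normal bundle of rank one on which $\C^*$ acts with a single nonzero weight ($+1$ or $-1$ after fixing the sign convention). First I would argue that such an $S$ is automatically on the extremal level of the moment map, i.e.\ $S=F_-$ or $S=F_+$: indeed, by the discussion preceding Proposition~\ref{str}, the moment map $J$ is strictly increasing along the $\R_{\ge 0}$-orbits, and at a point $x\in S$ the entire normal direction is either expanding or contracting, so $x$ is a local maximum or a local minimum of $J$; since $J$ is a perfect Bott--Morse function with connected level sets, its local extrema are global, and $S$ must coincide with $F_+$ or $F_-$. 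This already gives the ``maximum or minimum'' assertion, and since there is exactly one $F_-$ and one $F_+$, it also gives the bound of $2$ on the number of surface components.

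Next I would show $S$ is a Fano (del Pezzo) surface. Suppose $S=F_-$ has weight type $(0,1)$, so $X^-_- = S$ and the plus-decomposition stratum $X^+_-$ is a line bundle $L\to S$ whose fiber is the weight-$(+1)$ normal direction; this $L$ is an open dense $\C^*$-invariant subset of $X$, and $X$ is its equivariant partial compactification. The adjunction/Fano-reduction argument: the anticanonical bundle $-K_X$ is ample, and restricting along $S\hookrightarrow X$ with normal bundle $N=N_{S/X}$ gives $-K_X|_S = -K_S \otimes N^{-1}$ (from $K_X|_S = K_S\otimes N$). It remains to see $N^{-1}$ — equivalently the normal weight-space line bundle — is such that $-K_S = (-K_X|_S)\otimes N$ is ample; the relevant input is that $N$ is the negative (or positive) normal bundle of a Morse--Bott minimum, hence, from the Kähler structure, $N$ is a \emph{semipositive} line bundle (its Chern class is represented by the restriction of the Kähler form scaled appropriately, since the moment map is proper and $S$ is the minimum). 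Then $-K_S = (-K_X)|_S \otimes N$ is a tensor product of an ample bundle with a nef bundle, hence ample, so $S$ is Fano. (Alternatively, one may invoke the cited fact that the Marsden--Weinstein reductions are Fano surfaces and realize $S$ as a degenerate reduction, but the adjunction computation is cleaner.)

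The step I expect to be the main obstacle is pinning down the positivity of the normal bundle $N_{S/X}$ precisely enough to conclude ampleness of $-K_S$, rather than merely nefness. The subtlety is that ``$S$ is the minimum of a moment map'' only directly yields that $N$ pairs nonnegatively with $\C^*$-invariant curves, and one must rule out the degenerate possibility that $-K_X|_S \otimes N$ fails to be ample on some curve in $S$ on which both factors are trivial. I would handle this by using that $-K_X$ is \emph{strictly} positive and that $N$ restricted to any curve $C\subset S$ has nonnegative degree (it is the weight space of an attracting/repelling direction along $C$, which is an invariant curve of the induced action on $X$, and Bia\l ynicki-Birula forces the weight to have the correct sign); strict positivity of $-K_X|_S$ then wins even when $\deg_C N = 0$. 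Once $-K_S$ is ample, $S$ is one of the surfaces in the Fano-surface list recalled in Section~\ref{sect2}, completing the proof.
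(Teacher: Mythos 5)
Your first paragraph is correct and is essentially the paper's own argument for the second and third claims: the normal bundle of a $2$-dimensional component is a rank-one representation with a single nonzero weight, so $J$ has a local (hence, by connectedness of level sets, global) extremum along $S$, and there is only one maximum and one minimum component.

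The second half, however, has a genuine gap. First, the adjunction formula is written with the wrong sign: it reads $K_S=K_X|_S\otimes N_{S/X}$, i.e.\ $-K_S=(-K_X|_S)\otimes N_{S/X}^{-1}$, not $-K_S=(-K_X|_S)\otimes N$. More seriously, the positivity you need for the ``ample $\otimes$ nef'' conclusion is simply not available: there is no semipositivity (in either direction) of the normal bundle of an extremal fixed surface. Two examples from the paper itself show both failures. For the standard action on $P^3$ with $X^{\C^*}=P^2\cup\{pt\}$, the minimum component $S=P^2$ has $N_{S/X}={\mathscr O}_{P^2}(1)$, so $N^{-1}$ is anti-ample. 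For the blow-up of $P^3$ along a pointwise-fixed line with normal weights $(+1,+1)$, the exceptional divisor $E\cong P^1\times P^1$ is a pointwise-fixed extremal surface with $N_{E/X}={\mathscr O}_E(-1)$, which has degree $-1$ on the fibres; these fibres are invariant (indeed fixed) curves in $S$ on which $\deg N<0$, so your proposed patch --- that Bia{\l}ynicki--Birula forces $\deg_C N\ge 0$ on invariant curves --- is false. In both examples $-K_S$ is of course still ample, but not for the reason you give, and the adjunction route gives no uniform argument. The alternative you mention in passing and discard is in fact the paper's proof: using the equivariant Darboux model near $S$, the sublevel set $P_{<\epsilon}$ is a line bundle over $S$ all of whose points are semistable, so by Heinzner--Loose the reduced space $J^{-1}(\epsilon)/S^1$ is biholomorphic to $S$; Futaki's theorem that symplectic reductions of Fano manifolds are Fano then shows $S$ is a del Pezzo surface. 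That input cannot be replaced by the adjunction computation as written.
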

\begin{proof}
The second claim is clear since the normal bundle of $S$ is a
complex $1$-dimensional representation of $\C^*,$ its weight is
either positive or negative. Without loss of generality we assume on
$S$ the moment map takes minimum and we can further assume the
minimum is $0.$

 By the equivariant Darboux theorem \cite{at82},
there is a local holomorphic coordinate $(z_1,z_2,z_3)$ around a point $x\in S$ such
that $S$ is defined by $z_3=0,$ the action is given by
$$t\cdot(z_1,z_2,z_3)=(z_1,z_2,t z_3),$$
and the moment map $J$ has the following local expression
$$J(x)=|z_3|^2 +o(r^2),$$
where $r^2=|z_1|^2+|z_2|^2+|z_3|^2.$ Let $\epsilon$ be a
sufficiently small positive number. Then $J^{-1}(\epsilon)$ is a
circle bundle over $S=P_{0}=J^{-1}(0),$ and $P_{<\epsilon}=\{x\in X|J(x)<\epsilon\}$ is a complex line bundle over $P_0$ whose fibres are exactly the orbits of the induced $\C^*$-action. If we contract the fibres of $P_{<\epsilon}\rightarrow P_0$ along the $\C^*$-orbits, the quotient space is exactly $P_0.$ Let $$P^{ss}_{\epsilon}=\{x\in P_{<\epsilon}|\overline{\C^* \cdot x}\cap J^{-1}(\epsilon)\not=\emptyset\}$$
denote the semistable set. Note $P^{ss}_{\epsilon}=P_{<\epsilon}$ since each $\C^*$-orbit intersects with the level set $J^{-1}(\epsilon)$
 if $\epsilon$ is sufficiently small. On the other hand  by the main theorem in
\cite{hl94} we know the Marsden-Weinstein quotient is isomorphic to the GIT quotient
$$J^{-1}(\epsilon)/S^1\cong P^{ss}_{\epsilon}\slash\slash \C^*=P_{<\epsilon}\slash\slash \C^*.$$
Hence $J^{-1}(\epsilon)/S^1$ is holomorphicly isomorphic to $P_0.$
By the main theorem in the Futaki's paper \cite{fut},
$J^{-1}(\epsilon)/S^1$ is  a Fano manifold if $X$ is Fano. Therefore $S=P_0$ is a del Pezzo surface.
\end{proof}
\begin{proposition} \label{hodgenumber} Let $X$ be a Fano $3$-fold with semifree algebraic $\C^*$-action, and notion $F_+,F_-,C_k, a,b,c$ defined as in the paragraph ahead of Proposition \ref{str}. Then

\begin{enumerate}
\item If maximum component $F_+$ (resp.  minimum component $F_-$) is a curve,
then it is a rational curve;
\item We could read Hodge numbers
of $X$ form the fixed point data:

$$\begin{array}{rcl}
h^{1,1}&=&h^{1-n_-,1-n_-}(F_+)+h^{1,1}(F_-)+a+b;\\
&=&h^{2-n_-,2-n_-}(F_+)+h^{2,2}(F_-)+a+c;\\
h^{1,2}&=&\sum_{k=1}^a g_k;
\end{array}$$
where $n_-$ denote the number of negative weights of isotropic
representation of $\C^*$ at the points in $F_+$ and $g_k$ is the genus of the fixed curve $C_k.$
\end{enumerate}
\end{proposition}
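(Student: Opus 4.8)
The plan is to use the Carrell--Sommese homology isomorphism from Proposition \ref{torfree}, combined with the weight-type classification of Proposition \ref{str} and the Hodge-theoretic identifications collected just before this proposition, to extract each Hodge number as a sum of contributions from the fixed components.

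\smallskip

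\emph{Step 1: the genus-zero assertion for a maximal (minimal) fixed curve.} Suppose $F_+$ is a curve, so it has weight type $(n_-,0)$ with $n_-\in\{1,2,3\}$. I would argue exactly as in the proof of Proposition \ref{maxmin}: near $F_+$ the moment map has a Bott--Morse maximum along $F_+$, and for $\epsilon$ small and positive $J^{-1}(\max-\epsilon)/S^1$ is, by the main theorem of \cite{hl94}, the GIT quotient of a neighborhood of $F_+$, which contracts onto $F_+$; by \cite{fut} this reduced space is a Fano (hence del Pezzo or $\Pj^1$) variety, and since it fibers/contracts onto $F_+$ with the fiber directions being the $n_-$ negative-weight normal directions, $F_+$ itself must be a rational curve (a projective curve dominated by, or a contraction image of, a rationally connected variety is $\Pj^1$). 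Symmetrically for $F_-$. Here I am using that Proposition \ref{maxmin} already handles the surface case, so only the curve case of item (1) needs this argument.

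\smallskip

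\emph{Step 2: read off $h^{1,2}$.} By Proposition \ref{torfree} (Carrell--Sommese) we have, in degree $3$,
\[
H_3(X,\Z)\;\cong\;\bigoplus_j H_{3-2\nu^-_j}(F_j,\Z).
\]
Going through the list in Proposition \ref{str}: $F_\pm$ contribute $H_3$ or $H_1$ of a surface or a rational curve shifted by $2n_-$ or $0$; the isolated interior points $x_i$ (shift $2$) and $y_j$ (shift $4$) contribute $H_1(\mathrm{pt})=0$ and $H_{-1}(\mathrm{pt})=0$; each interior fixed curve $C_k$ has $\nu^-_k=1$, contributing $H_1(C_k,\Z)=\Z^{2g_k}$. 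Since $F_\pm$ are either del Pezzo surfaces (with $b_1=b_3=0$) or rational curves (with $H_1=0$ once shifted past degree $3$, and $H_3$ of a curve vanishing), the only surviving terms are the $C_k$'s, giving $b_3=\sum_{k=1}^a 2g_k$. Combined with $b_3=2h^{1,2}$ from the displayed Hodge identities, this yields $h^{1,2}=\sum_{k=1}^a g_k$.

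\smallskip

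\emph{Step 3: read off $h^{1,1}$ in two ways.} Apply Carrell--Sommese in degree $2$ and degree $4$. In degree $2$: $H_2(X,\Z)\cong\bigoplus_j H_{2-2\nu^-_j}(F_j,\Z)$. The component $F_-$ has $\nu^-=0$, contributing $H_2(F_-,\Z)$, whose rank is $h^{1,1}(F_-)$ (or, if $F_-$ is a rational curve, $1=h^{0,0}$, consistent with the formula via the $h^{1-n_-,1-n_-}$ bookkeeping when one instead treats $F_+$); the component $F_+$ has $\nu^-=n_-$, contributing $H_{2-2n_-}(F_+,\Z)$, which is $H_2$ of a surface when $n_-=0$ is impossible here (that is $F_-$'s role), $H_0$ of $F_+$ when $n_-=1$, and $0$ when $n_-\ge 2$; writing this uniformly as rank $h^{1-n_-,1-n_-}(F_+)$ (interpreting negative-index Hodge numbers as zero and $h^{0,0}=1$) captures all cases. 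Each $x_i$ has $\nu^-=1$, contributing $H_0(\mathrm{pt})=\Z$: that is the "$+b$". Each $y_j$ has $\nu^-=2$, contributing $H_{-2}(\mathrm{pt})=0$. Each $C_k$ contributes $H_0(C_k,\Z)=\Z$: that is the "$+a$". Taking ranks and using $b_2=h^{1,1}$ gives the first formula. The degree-$4$ computation is completely parallel: $H_4(X,\Z)\cong\bigoplus_j H_{4-2\nu^-_j}(F_j,\Z)$, where $F_-$ contributes $H_4(F_-,\Z)$ of rank $h^{2,2}(F_-)$, $F_+$ contributes $H_{4-2n_-}(F_+,\Z)$ of rank $h^{2-n_-,2-n_-}(F_+)$, each $C_k$ ($\nu^-=1$) contributes $H_2(C_k,\Z)=\Z$ giving "$+a$", each $y_j$ ($\nu^-=2$) contributes $H_0(\mathrm{pt})=\Z$ giving "$+c$", and each $x_i$ ($\nu^-=1$) contributes $H_2(\mathrm{pt})=0$. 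Since $b_4=b_2=h^{1,1}$, this gives the second formula.

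\smallskip

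\emph{Expected main obstacle.} The bookkeeping is straightforward once one fixes conventions, but the genuine subtlety is the uniform notation $h^{1-n_-,1-n_-}(F_+)$ and $h^{2-n_-,2-n_-}(F_+)$: one must check these reduce correctly in each of the cases $n_-=1,2,3$ and when $F_+$ is a point, a curve, or a surface, with the convention that Hodge numbers with a negative index vanish and that $h^{0,0}$ of a connected variety equals $1$. I would state this convention explicitly and then verify the three tables of cases. The one place where a small genuine argument (rather than bookkeeping) is needed is Step 1, the rationality of a maximal/minimal fixed curve, and there I rely on \cite{hl94}, \cite{fut}, and the rational connectedness of Fano varieties exactly as in Proposition \ref{maxmin}.
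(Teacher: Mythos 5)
Your proof is correct, and for part (2) it follows essentially the paper's route: both rest on the Carrell--Sommese/Fujiki localization of (co)homology at the fixed components. You use the integral-homology version together with the identifications $b_2=b_4=h^{1,1}$ and $b_3=2h^{1,2}$ coming from Kodaira vanishing, while the paper applies the Dolbeault version $\bigoplus_j H^{p-\nu^-_j,q-\nu^-_j}(F_j)\cong H^{p,q}(X)$ directly; the case-by-case bookkeeping (including the convention that negative-index Hodge numbers vanish) is the same, and your rank counts all check out. The genuine divergence is in part (1). The paper gets rationality of a minimal fixed curve for free from the same isomorphism: taking $(p,q)=(0,1)$, only the component with $\nu^-=0$, namely $F_-$, contributes, so $H^{0,1}(F_-)\cong H^{0,1}(X)=0$ by Kodaira vanishing, and the statement for $F_+$ follows by reversing the action. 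You instead argue geometrically: the reduced space just below the maximum is the projectivization $\Pj(N_{F_+})$ of the rank-two normal bundle (the scalar weight $-1$ action on the sphere bundle quotients to the $\Pj^1$-bundle), which is del Pezzo by Futaki's theorem and fibers over $F_+$, forcing $g(F_+)=0$ by comparing irregularities. Both arguments are valid; the paper's is shorter and needs nothing beyond the localization isomorphism it already invokes, while yours is independent of the Dolbeault version of that theorem and makes the reduced space near the extremum explicit, which is information the paper exploits anyway in Sections \ref{sect6} and \ref{sect7}.
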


\begin{proof}
Carrell, Sommese \cite{cs79} and Fujiki \cite{fu79} proved that
there are isomorphisms of Dolbeault cohomology groups
$$\bigoplus_j H^{p-\nu^-_j,q-\nu^- _j}(F_j)\longrightarrow H^{p,q}(X),$$
for all $p,q.$  Hence
$$\begin{array}{rcl}
H^{1,1}(X)&\cong & H^{1-n_-,1-n_-}(F_+)\oplus H^{1,1}(F_-)\oplus
\bigoplus_{k=1}^a H^{0,0}(C_k)\oplus \Z^{b};\\
 H^{2,2}(X)&\cong &
H^{2-n_-,2-n_-}(F_+)\oplus H^{2,2}(F_-)\oplus \bigoplus_{k=1}^a
H^{1,1}(C_k)\oplus \Z^{c};\\
H^{1,2}(X)&\cong & H^{1-n_-,2-n_-}(F_+)\oplus H^{1,2}(F_-)\oplus
\bigoplus_{k=1}^a
H^{0,1}(C_k).\\
\end{array}$$
From the first two equations we have the first part of (B). By the
same reasoning, we have
$$H^{0,p}(X)=H^{0,p}(F_-)=0,\quad {\rm for}~p=1,2,3.$$
Hence if $F_-$ is a curve then $F_-=P^1.$ Note $F_+$ and $F_-$ have
symmetric positions (if we turn the manifold $X$ upside down along the direction where the moment map increases,  then $F_+$ and $F_-$ will exchange positions), so $F_+=P^1$ if it is a curve. By Proposition
\ref{maxmin}, $F_+,F_-$ are del Pezzo surfaces when they are
surfaces. Therefore we always have
$$H^{p,q}(F_+)=H^{p,q}(F_-)=0,\quad {\rm if}~p\not=q$$
whenever they are points, curves or surfaces. Hence $H^{1,2}(X)\cong
\oplus_{k=1}^a H^{0,1}(C_k),$ we get the second part of  (B).
\end{proof}

\section{Computations for the holomorphic Lefschetz fixed point formula and equivariant localization formula}\label{sect5}
By Atiyah and Bott's  Lefschetz fixed point formula \cite{ab}
together with Atiyah and Singer's index theorem \cite{as} we have
the following holomorphic Lefschetz fixed point formula:

\begin{theorem}\label{lef} Let $X$ be a compact complex manifold, $E$ a holomorphic vector bundle over $X,$
and $G$ a compact Lie group. Suppose that $g\in G$ is a holomorphic automorphism of pair $(X, E).$
 Let $\{F_i\}$
be the connected components of the fixed point
 of $g$ and suppose they are smooth submanifolds. The normal bundle $N_{F_i}$ of $F_i$
 under action of $g$ is split into direct sum of line bundle
$N_{F_i}=\oplus_{j} L_j,$ where $g$ acts on $L_j$ by multiplying
$t^{n_j}$ and denote the first Chern class of $L_j$ by $x_j$. Then
the Lefschetz number
\begin{eqnarray}\sum_i (-1)^i {\rm Tr}\Big(g|_{H^i(X, {\mathscr O}(E))}\Big)=\sum_{i}L(F_i, {\mathscr O}(E)),\label{lefsc}\end{eqnarray}
where the contribution of the component $F_i$ is computed as
\begin{eqnarray}L(F_i,{\mathscr O}(E))=\int_{F_i}\frac{ch(E|_{F_i})\Pi_j
\Big(\frac{1-e^{-x_j}/t^{n_j}}{1-1/t^{n_j}}\Big)^{-1} td(F_i)} {\det
(1-g|_{(N_{F_i})^*})},
\label{lefs}\end{eqnarray}
 where $ch(E|_{F_i})$ is the Chern character
of $E|_{F_i}$ and $td(F_i)$ is the total Todd class of $F_i.$

\end{theorem}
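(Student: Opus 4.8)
The plan is to obtain this formula by specializing the general Atiyah--Bott Lefschetz fixed point theorem for elliptic complexes \cite{ab} to the Dolbeault complex twisted by $E$, and then evaluating the resulting local contributions with the Atiyah--Singer index theorem \cite{as}. First I would observe that, since $g$ is a holomorphic automorphism of the pair $(X,E)$, it acts on the elliptic complex $(\mathscr{A}^{0,\bullet}(E),\bar\partial)$ of $(0,\bullet)$-forms with values in $E$, whose cohomology is $H^i(X,{\mathscr O}(E))$ by Dolbeault's theorem. Hence the alternating trace on the left of \eqref{lefsc} is precisely the Atiyah--Bott Lefschetz number of $g$ for this complex, and, the fixed components $F_i$ being smooth, \cite{ab} writes it as a sum $\sum_i L(F_i,{\mathscr O}(E))$ of local contributions, one per $F_i$.

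The core of the argument is the identification of each $L(F_i,{\mathscr O}(E))$ with the integral in \eqref{lefs}. When $F_i=\{p\}$ is a point, \cite{ab} gives the elementary value $\mathrm{Tr}(g|E_p)\big/\det\!\big(1-g|_{(T_pX)^*}\big)$, which one checks immediately to be \eqref{lefs} specialized to a point: there all $x_j=0$, each factor $\big((1-e^{-x_j}/t^{n_j})/(1-1/t^{n_j})\big)^{-1}$ reduces to $1$, and $\det(1-g|_{(N_{F_i})^*})=\prod_j(1-1/t^{n_j})$. For a positive-dimensional $F_i$, the Atiyah--Bott theorem still presents $L(F_i,{\mathscr O}(E))$ as the index of the elliptic complex over $F_i$ obtained by restricting the symbol of $\bar\partial$ to $F_i$; up to the usual identification this is the Dolbeault complex of $F_i$ with coefficients in $E|_{F_i}$ tensored with the exterior algebra of the normal directions, $g$ acting fibrewise on the latter. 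Writing $N_{F_i}=\oplus_j L_j$ with $g$ acting on $L_j$ by $t^{n_j}$ and $x_j=c_1(L_j)$, the cohomological form of \cite{as} then yields
\begin{equation*}
L(F_i,{\mathscr O}(E))=\int_{F_i}\mathrm{ch}(E|_{F_i})\,\mathrm{td}(F_i)\,\prod_j\frac{1}{1-e^{-x_j}/t^{n_j}},
\end{equation*}
the normal factor being the standard normal-direction contribution for the $\bar\partial$-complex. Since $\det(1-g|_{(N_{F_i})^*})=\prod_j(1-1/t^{n_j})$, the identity
\begin{equation*}
\prod_j\frac{1}{1-e^{-x_j}/t^{n_j}}=\frac{1}{\det(1-g|_{(N_{F_i})^*})}\,\prod_j\Big(\frac{1-e^{-x_j}/t^{n_j}}{1-1/t^{n_j}}\Big)^{-1}
\end{equation*}
turns the previous display into \eqref{lefs}.

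The hard part is this identification of the positive-dimensional local contribution: one must isolate the symbol of $\bar\partial$ in a neighbourhood of $F_i$, recognize the restricted symbol complex together with the induced $g$-action on $N_{F_i}$, and run it through the index theorem while keeping careful track of the $\C^*$-weights $n_j$, of the way $g$ acts on sections (responsible for the appearance of $(N_{F_i})^*$ rather than $N_{F_i}$, and for the minus signs in the exponents), and of the normalization of the Todd class. In the only situation needed in this paper, where $g$ belongs to the circle $S^1\subset\C^*$, one can instead avoid reproving \cite{ab,as} and deduce \eqref{lefs} from equivariant Riemann--Roch together with Atiyah--Bott--Berline--Vergne localization: the virtual representation $\sum_i(-1)^iH^i(X,{\mathscr O}(E))$ is the equivariant pushforward to a point of the equivariant Chern character of $E$ times the equivariant Todd class of $TX$, and localizing that pushforward onto the fixed set $X^{S^1}$ produces \eqref{lefs} directly. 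On either route the first and last steps are purely formal, and the middle one carries all the content.
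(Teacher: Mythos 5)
Your proposal is correct and follows exactly the route the paper itself indicates: the paper gives no proof of Theorem \ref{lef}, simply quoting it as the holomorphic Lefschetz fixed point formula obtained from \cite{ab} together with \cite{as}, and your derivation (Atiyah--Bott for the twisted Dolbeault complex, Atiyah--Singer for the positive-dimensional local contributions, plus the elementary rearrangement $\prod_j(1-e^{-x_j}/t^{n_j})^{-1}=\det(1-g|_{(N_{F_i})^*})^{-1}\prod_j\bigl(\tfrac{1-e^{-x_j}/t^{n_j}}{1-1/t^{n_j}}\bigr)^{-1}$) fleshes out precisely that citation. The algebraic identity matching your intermediate integrand to the form \eqref{lefs} checks out.
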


The following proposition is special case of Lemma 3.1 of \cite{tw00}, where it is proved in another way. Here we use Theorem \ref{lef} and our proof is very simple.

\begin{proposition} Let $X$ be a Fano-$n$-fold equipped with
a semi-free circle action with isolated fixed points. Let $N_k$
denote the number of fixed points of index $2k$ for $0\leq k\leq n.$ Then $N_k=\frac{n!}{k!(n-k)!}.$
\end{proposition}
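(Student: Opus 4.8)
The plan is to feed the holomorphic Lefschetz fixed point formula of Theorem \ref{lef} with the trivial bundle $E=\mathscr O_X$ and a generic group element $t\in S^1\subset\C^*$, and then to read off the numbers $N_k$ by matching the resulting rational function of $t$ against the binomial expansion of $(1-t)^n$.

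\textbf{The global side.} First I would observe that the left-hand side of (\ref{lefsc}) is just the constant $1$: since $-K_X$ is ample, Kodaira vanishing gives $H^i(X,\mathscr O_X)=0$ for $i\ge 1$, while $H^0(X,\mathscr O_X)=\C$ carries the trivial $\C^*$-representation, so $\sum_i(-1)^i\mathrm{Tr}(t|_{H^i(X,\mathscr O_X)})=1$ for every $t\ne 1$.

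\textbf{The local contributions.} Next I would pin down the isotropy data at each fixed point. Since the action is semifree with isolated fixed points, at a fixed point $p$ every weight of the isotropy representation on $T_pX\cong\C^n$ is $+1$ or $-1$: a weight $0$ is impossible because $p$ is isolated, and a weight $a$ with $|a|\ge 2$ would produce a nonzero point with stabilizer $\Z/a\Z$, contradicting semifreeness. If $p$ has index $2k$ then, as recalled in Section \ref{sect3}, the complex rank of the negative normal bundle is $k$, so exactly $k$ weights are $-1$ and $n-k$ are $+1$. For an isolated fixed point the Chern character, the Todd class, and the correction product in (\ref{lefs}) are all equal to $1$ (the last because $x_j=0$ on a point), so the contribution is
\[
L(p)=\frac{1}{\prod_j\bigl(1-t^{-n_j}\bigr)}=\frac{1}{(1-t)^{k}(1-t^{-1})^{\,n-k}}.
\]
Using $1-t^{-1}=-t^{-1}(1-t)$ this becomes $L(p)=(-1)^{\,n-k}\,t^{\,n-k}(1-t)^{-n}$, which depends only on the index $2k$.

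\textbf{Comparison of coefficients.} Summing over the fixed points and equating with the global side gives $(1-t)^{n}=\sum_{k=0}^{n}(-1)^{\,n-k}N_k\,t^{\,n-k}$; substituting $m=n-k$ and comparing with $(1-t)^{n}=\sum_{m=0}^{n}\binom{n}{m}(-1)^{m}t^{m}$ forces $N_{n-m}=\binom{n}{m}$, i.e. $N_k=\binom{n}{k}=\frac{n!}{k!(n-k)!}$. I do not expect a real obstacle here: the Fano hypothesis collapses the global Lefschetz number to the constant $1$, and semifreeness turns every local term into an explicit monomial over $(1-t)^{n}$, so the conclusion is a bare coefficient comparison. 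The only points needing care are the verification that semifreeness at an isolated fixed point forces all weights to be $\pm 1$, and the bookkeeping of the sign $(-1)^{\,n-k}$ and the power $t^{\,n-k}$ that appear when rewriting $(1-t^{-1})^{\,n-k}$.
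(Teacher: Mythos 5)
Your proposal is correct and is essentially the paper's own argument: both reduce the global Lefschetz number to $1$ via Kodaira vanishing, identify the local contribution of an index-$2k$ point as $\frac{1}{(1-t)^k(1-t^{-1})^{n-k}}$, and extract $N_k=\binom{n}{k}$ by comparing with the binomial expansion of $(1-t)^n$. The only difference is that you spell out the weight bookkeeping (why semifreeness forces all weights to be $\pm 1$) which the paper leaves implicit from its Section 3 discussion.
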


\begin{proof}
If $x$ is a fixed point of index $2k,$ then $L(x,{\mathscr
O}_X)=\frac{1}{(1-t)^k (1-t^{-1})^{n-k}}.$ Note $H^i(X,{\mathscr
O}_X)=0$ for $i\geq 1$ hence $\sum_i (-1)^i {\rm Tr}(g|_{H^i(X, {\mathscr O}_X)})=1$ for any biholomorphic map $g.$
 By the holomorphic
Lefschetz fixed point formula (\ref{lefsc}) we have
\begin{eqnarray}\begin{array}{rcl}1=\sum_{k=0}^n\frac{N_k}{(1-t)^k (1-t^{-1})^{n-k}}=\frac{1}{(1-t)^n}\sum_{k=0}^n N_k(-t)^{n-k},\end{array}\end{eqnarray}
holds for any $t\in\C^*,$ hence $N_k=\frac{n!}{k!(n-k)!}.$
\end{proof}

By \cite[Theorem 1.1 and Theorem 1.2]{tw00}, $X$ and $(P^1)^n$ (the product of complex projective line) have the same cohomology rings and the same Chern classes. When $n=1,2$ it is easy to see they are biholomorphic isomorphic by classifications of curves and surfaces, but for $n>3 $ we don't know whether they are biholomorphic isomorphic. Using Wall's Theorem \ref{wal} and the classification of Fano 3-folds we know they are biholomorphic isomorphic when $n=3:$

\begin{corollary}\label{p3}  If $X$ be a Fano-$3$-fold equipped with
a semi-free circle action with isolated fixed points. Then
$X=(P^1)^3.$
\end{corollary}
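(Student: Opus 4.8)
\emph{Proof idea.} The plan is to use \cite[Theorems 1.1 and 1.2]{tw00} together with Wall's Theorem~\ref{wal} to show that $X$ is diffeomorphic to $(P^1)^3$, and then to invoke the classification of Fano $3$-folds to promote this to a biholomorphism. Applying \cite[Theorems 1.1 and 1.2]{tw00} with $n=3$, the Fano $3$-fold $X$ has the same integral cohomology ring and the same Chern classes $c_1,c_2,c_3$ as $(P^1)^3$; in particular $b_2(X)=3$, $b_3(X)=0$, and under a ring isomorphism $H^*(X,\Z)\cong H^*((P^1)^3,\Z)$ the anticanonical class $c_1(X)=-K_X$ corresponds to $c_1((P^1)^3)$.

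First I would check that $X$ meets the hypotheses of Theorem~\ref{wal}. Being Fano, $X$ is projective and simply connected, hence carries a symplectic form; by Proposition~\ref{torfree} its homology is torsion free. On a complex manifold $w_2$ is the mod $2$ reduction of $c_1$, and since $c_1((P^1)^3)=2(h_1+h_2+h_3)$ is divisible by $2$ and $c_1(X)$ is identified with it, $w_2(X)=0$. Finally the first Pontryagin class $p_1=c_1^2-2c_2$ is determined by the data already matched. Therefore $X$ and $(P^1)^3$ are simply connected, torsion-free symplectic $6$-manifolds with vanishing $w_2$, isomorphic cohomology rings, and equal $p_1$, so Theorem~\ref{wal} produces a diffeomorphism $X\cong(P^1)^3$.

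It remains to upgrade this diffeomorphism to a biholomorphism, and this is the role of the Fano classification. Since $b_2(X)=3$ and the homology of $X$ is torsion free, $\mathrm{Pic}(X)\cong H^2(X,\Z)\cong\Z^3$, so $X$ has Picard number $3$; and from the cohomology ring and $c_1$ one computes $(-K_X)^3=48$. Going through Mori--Mukai's list of Fano $3$-folds of Picard number $3$, the only entries with $b_3=0$ and $(-K)^3=48$ are $(P^1)^3$ and $P^1\times\widetilde{P^2}$; the latter has $c_1$ not divisible by $2$, hence $w_2\neq 0$, and so is not diffeomorphic to $(P^1)^3$. Thus $X$ lies in the deformation family of $(P^1)^3$. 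Since $H^1\big((P^1)^3,T_{(P^1)^3}\big)=0$ by the K\"unneth formula (using $H^1(P^1,\mathcal{O})=H^1(P^1,T_{P^1})=0$), the manifold $(P^1)^3$ is infinitesimally rigid, so its deformation family is a single point, and $X\cong(P^1)^3$.

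The main obstacle is exactly this last step: passing from the diffeomorphism type to the complex-analytic type cannot be done by topology alone and genuinely depends on the Mori--Mukai tables, both to confirm that $(P^1)^3$ is the unique Fano $3$-fold in its diffeomorphism class and to organize the numerical invariants (anticanonical degree, Betti numbers, and the cohomology ring structure detecting $w_2$) that single it out among Picard-rank-$3$ Fano $3$-folds. Verifying these invariants against the classification is where the real work lies.
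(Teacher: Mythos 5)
Your proof follows exactly the paper's route: Tolman--Weitsman's Theorems 1.1 and 1.2 give the cohomology ring and Chern classes of $(P^1)^3$, Proposition \ref{torfree} and Wall's Theorem \ref{wal} upgrade this to a diffeomorphism, and the Mori--Mukai classification identifies the complex structure. The only difference is that you spell out the final step (computing $(-K_X)^3=48$ and $h^{1,1}=3$, ruling out $P^1\times\widetilde{P^2}$ via $w_2$, and invoking rigidity of $(P^1)^3$), which the paper leaves implicit as ``easy to see.''
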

\begin{proof}By  \cite[Theorem 1.1 and Theorem 1.2]{tw00}, $X$ and $(P^1)^3$ have the same cohomology ring and equivariant cohomology ring, and they both are homology torsion free by Proposition \ref{torfree}. In particular they have the same Chern classes $c_1,c_2,c_3.$ Hence they have the same Stiefel-Whitney classes $w_2\equiv c_1 {\rm mod 2}$ and Pontryagin class $p_1=c_1^2-2c_2.$ By Theorem \ref{wal}, $X$ is diffeomorphic to $(P^1)^3.$ Since $X$ is Fano, it is easy to see $X=(P^1)^3$ by \cite{mm1,mm2} Mori-Mukai's classification of Fano $3$-folds.
 \end{proof}
In the rest of this section we assume $X$ is a Fano $3$-fold with a
$\C^*$-action and write $X^{\C^*}=F_+\cup F_- \cup_{k=1}^a
C_k\cup\{x_i,y_j|1\leq i\leq b,1\leq j\leq c\}$ as in Section
\ref{sect3}. The contributions of isolated fixed points to the
Lefschetz fixed point formula are given by
$$\begin{array}{rcl}L(x_i,{\mathscr
O}_X)&=&\frac{1}{(1-t)(1-t^{-1})^2}=\frac{t^2}{(1-t)^3};\\
L(y_j,{\mathscr
O}_X)&=&\frac{1}{(1-t)^2(1-t^{-1})}=\frac{-t}{(1-t)^3}.\\
\end{array}$$
The normal bundle of the interior fixed curve $C_k$ splits as
$$N_{C_k}=L^+_k \oplus L^-_k ,$$
where $\C^*$ acts on $L^+_k$(resp. $L^-_k$) by positive (resp.
negative) weight. Let $c_1(L^{\pm}_k)$ denote the first Chern class
of $L^{\pm}_k.$  Since $Ch({\mathscr O}_X|_{C_k})=1$ and $Td(C_k)=1+\frac{c_1(C_k)}{2},$ by formula (\ref{lefs}),
\begin{eqnarray}\begin{array}{rcl}
L(C_k,{\mathscr O}_X)=\int_{C_k}\frac{1+\frac{c_1(C_k)}{2}}{(1-te^{-c_1(L^-_k )})(1-t^{-1}e^{-c_1(L^+_k )})}
\end{array}\end{eqnarray}
Using Taylor expansion of the exponential functions in the denominator of the integral (note it suffices to expand it to the linear terms since the integrated is over a curve):
\begin{eqnarray}\begin{array}{rcl}
L(C_k,{\mathscr O}_X)=\int_{C_k}\frac{1+\frac{c_1(C_k)}{2}}{(1-t(1-c_1(L^-_k)))(1-t^{-1}(1-c_1(L^+_k))}.\end{array}\label{eqq1}\end{eqnarray}
Rewrite two terms in  the denominator of the integral as

\begin{eqnarray}\left\{\begin{array}{rcl}
1-t(1-c_1(\!L^-_k\!)\!)&=&(1-t\!)(1+\frac{c_1(L^-_k\!) t}{1-t});\\
1-t^{-1}\!(\!1-c_1(L^+_k\!)\!)&=&(1-t^{-1}\!)(1+\frac{c_1(L^+_k\!) t^{-1}}{1-t^{-1}}\!).\\
\end{array}\right.\label{eqq2}\end{eqnarray}
 Taking (\ref{eqq2}) into (\ref{eqq1}) we get
\begin{eqnarray}\begin{array}{rcl}
L(C_k,{\mathscr O}_X)=\frac{1}{{(1-t)(1-t^{-1})}}\int_{C_k}\frac{1+\frac{c_1(C_k)}{2}}{(1+\frac{c_1(L^-_k) t}{1-t})(1+\frac{c_1(L^+_k) t^{-1}}{1-t^{-1}})}.\end{array}\label{eqq4}\end{eqnarray}
We have the following Taylor expansion of the reciprocal of the denominator of the integration of left hand side of (\ref{eqq4})   as a power series of variable of $\frac{1}{1-t},$
\begin{eqnarray}\begin{array}{rcl}
\frac{1}{(1+\frac{c_1(L^-_k) t}{1-t})(1+\frac{c_1(L^+_k) t^{-1}}{1-t^{-1}})}=1+\frac{c_1(L^+_k ) -c_1(L^-_k) t}{1-t}+\cdots,\end{array} \label{eqq5}
\end{eqnarray}
 note here it suffices to expand it to the linear term since the integration in (\ref{eqq4}) is on a complex curve. By (\ref{eqq4}) and (\ref{eqq5}) we have
\begin{eqnarray}\begin{array}{rcl}
L(C_k,{\mathscr O}_X)=\frac{1}{{(1-t)(1-t^{-1})}}\int_{C_k}\Big(\frac{c_1(C_k)}{2}+\frac{c_1(L^+_k ) -c_1(L^-_k) t}{1-t}\Big).\end{array}\label{eqq6}\end{eqnarray}
Note by Riemann-Roch Theorem for curve we have $\int_{C_k}c_1(C_k)=2\chi(C_k).$
For brevity  throughout of this paper we also use $c_1(L^{\pm}_k)$ to denote the first Chern number
$\int_{C_k}c_1(L^{\pm}_k),$ then we may rewrite (\ref{eqq6}) as
\begin{eqnarray}\begin{array}{rcl}
L(C_k,{\mathscr O}_X)
=\frac{-t(1-t)\chi(C_k)-(c_1(L^+_k ) -c_1(L^-_k) t)t}{(1-t)^3}.
\end{array}\end{eqnarray}
To sum up  we have

\begin{proposition}\label{lefinterior} For the interior fixed curve $C_k$ with normal bundle
$N_{C_k}=L^+_k\oplus L^-_k,$ the contribution $L(C_k,{\mathscr O}_X)$ in
the Lefschetz fixed point formula is computed as
$$\begin{array}{rcl}L(C_k,{\mathscr O}_X)=\frac{-t(1-t)\chi(C_k)-(\alpha^+_k -\alpha^-_k t)t}{(1-t)^3},
\end{array}$$
where $\alpha^{\pm}_k$ denote the Chern number $\int_{C_k}c_1 (\!L^{\pm}_k\! ).$
\end{proposition}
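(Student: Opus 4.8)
The plan is to obtain $L(C_k,{\mathscr O}_X)$ by a direct substitution into the holomorphic Lefschetz fixed point formula of Theorem~\ref{lef}, with $E={\mathscr O}_X$, with $g=t\in\C^*$ a general element of the torus, and with $F_i=C_k$ one of the interior fixed curves of Proposition~\ref{str}. First I would record the local normal data: since $C_k$ has weight type $(1,1)$, its normal bundle splits as $N_{C_k}=L^+_k\oplus L^-_k$, with $\C^*$ acting on $L^{\pm}_k$ with weight $\pm1$ (this is where semifreeness is used). Because $C_k$ is a curve, $ch({\mathscr O}_X|_{C_k})=1$ and $td(C_k)=1+\tfrac12 c_1(C_k)$, so formula~(\ref{lefs}) becomes
\begin{equation*}
L(C_k,{\mathscr O}_X)=\int_{C_k}\frac{1+\frac{c_1(C_k)}{2}}{\big(1-t\,e^{-c_1(L^-_k)}\big)\big(1-t^{-1}e^{-c_1(L^+_k)}\big)}.
\end{equation*}

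The second step is a truncation argument. Since $\dim_{\C}C_k=1$, any cohomology class of degree $\ge2$ integrates to zero over $C_k$, so throughout the computation every exponential and every geometric series may be replaced by its linear part. Thus I replace $e^{-c_1(L^{\pm}_k)}$ by $1-c_1(L^{\pm}_k)$ in the two denominator factors and then factor out the "constant" pieces $1-t$ and $1-t^{-1}$:
\begin{align*}
1-t\big(1-c_1(L^-_k)\big)&=(1-t)\Big(1+\tfrac{t\,c_1(L^-_k)}{1-t}\Big),\\
1-t^{-1}\big(1-c_1(L^+_k)\big)&=(1-t^{-1})\Big(1+\tfrac{t^{-1}c_1(L^+_k)}{1-t^{-1}}\Big).
\end{align*}
This pulls the scalar $\tfrac{1}{(1-t)(1-t^{-1})}$ out of the integral, leaving the reciprocal of a product of two factors of the form $1+(\text{nilpotent})$, whose Taylor expansion to first order is $1+\tfrac{c_1(L^+_k)-t\,c_1(L^-_k)}{1-t}$.

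The last step is to recombine and integrate. Multiplying by the Todd factor $1+\tfrac12 c_1(C_k)$, keeping only the degree-$\le1$ part, and integrating over $C_k$, I would use Riemann--Roch on a curve, $\int_{C_k}c_1(C_k)=2\chi(C_k)$, together with the shorthand $\alpha^{\pm}_k=\int_{C_k}c_1(L^{\pm}_k)$. Finally, using $\tfrac{1}{(1-t)(1-t^{-1})}=\tfrac{-t}{(1-t)^2}$ and placing the two surviving terms over the common denominator $(1-t)^3$ gives exactly
\begin{equation*}
L(C_k,{\mathscr O}_X)=\frac{-t(1-t)\chi(C_k)-(\alpha^+_k-\alpha^-_k t)\,t}{(1-t)^3}.
\end{equation*}

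I do not expect a genuine obstacle here: the argument is entirely formal once Theorem~\ref{lef} is available. The two points that demand care are (i) checking that each successive truncation to linear order is legitimate, which is purely a consequence of $\dim_{\C}C_k=1$, and (ii) keeping the asymmetry of the two normal directions straight — one factor carries $t$, the other $t^{-1}$ — so that the prefactor simplifies correctly to $-t/(1-t)^2$ and the $\chi(C_k)$ term acquires the extra factor $(1-t)$ when brought over $(1-t)^3$. A useful consistency check is that when the normal bundle is trivial ($\alpha^{\pm}_k=0$) the formula collapses to $\tfrac{-t\,\chi(C_k)}{(1-t)^2}=\int_{C_k}\tfrac{td(C_k)}{(1-t)(1-t^{-1})}$, as it should.
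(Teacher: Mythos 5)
Your proposal is correct and follows essentially the same route as the paper's own computation: substitute $E={\mathscr O}_X$ into formula (\ref{lefs}), truncate all expansions to linear order because $\dim_{\C}C_k=1$, factor $(1-t)(1-t^{-1})$ out of the denominator, expand the remaining reciprocal to first order, and integrate using $\int_{C_k}c_1(C_k)=2\chi(C_k)$. The only addition is your consistency check for $\alpha^{\pm}_k=0$, which is a nice sanity verification but not needed.
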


For the maximum and minimum fixed point components $F_+$ and $F_-,$
we use $N_{F_{\pm}}$ to denote their normal bundles in $X.$ We
still don't distinguish the Chern class and Chern number. For
example, if $F_+$ is a surface, $c^2_1(\!N_{F_+}\!)$ also denote the
integral $\int_{F_+} c^2_1\!(N_{F_+}\!).$

\begin{proposition}\label{leflocal} For the maximum and minimum fixed point components $F_+$ and $F_-,$
the contribution $L(F_{\pm},{\mathscr O}_X)$ is computed as in Table 1.
\end{proposition}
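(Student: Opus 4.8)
The plan is to substitute the trivial bundle $E={\mathscr O}_X$ into the localization formula (\ref{lefs}) and to evaluate the resulting integral over $F_\pm$ according to the three cases allowed by Proposition \ref{str}. Since $F_+$ has weight type $(n_-,0)$ with $n_-\in\{1,2,3\}$ and $F_-$ has weight type $(0,m_+)$ with $m_+\in\{1,2,3\}$, the component $F_+$ is an isolated point when $n_-=3$, a smooth curve when $n_-=2$, and a surface when $n_-=1$, and symmetrically for $F_-$. In the curve case Proposition \ref{hodgenumber}(A) gives $F_\pm\cong P^1$, and in the surface case Proposition \ref{maxmin} gives that $F_\pm$ is a del Pezzo surface; thus in all three cases $F_\pm$ is rational, so $\chi({\mathscr O}_{F_\pm})=1$ and $H^{p,q}(F_\pm)=0$ for $p\neq q$.

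First I would simplify the generic term of (\ref{lefs}). Because the action is semifree, every weight of the isotropic representation on the normal bundle of $F_+$ equals $-1$ and every such weight for $F_-$ equals $+1$; writing $N_{F_+}=\bigoplus_j L^-_j$ and $N_{F_-}=\bigoplus_j L^+_j$ (a single line bundle when the component is a surface, and, by the splitting principle, a rank-two bundle whose higher Chern data is irrelevant on a curve), and setting $x^\pm_j=c_1(L^\pm_j)$, the factor $\prod_j\bigl(\frac{1-e^{-x_j}/t^{n_j}}{1-1/t^{n_j}}\bigr)^{-1}$ divided by $\det(1-g|_{N^*})$ collapses, so that with $ch({\mathscr O}_X|_{F_\pm})=1$,
\[
L(F_+,{\mathscr O}_X)=\int_{F_+}\frac{td(F_+)}{\prod_j\bigl(1-e^{-x^-_j}t\bigr)},\qquad
L(F_-,{\mathscr O}_X)=\int_{F_-}\frac{td(F_-)}{\prod_j\bigl(1-e^{-x^+_j}t^{-1}\bigr)}.
\]
These two expressions are exchanged under $t\leftrightarrow t^{-1}$, which is the ``turn $X$ upside down'' symmetry already invoked in the proof of Proposition \ref{hodgenumber}; hence it is enough to evaluate $L(F_+,{\mathscr O}_X)$ and then substitute $t\mapsto t^{-1}$ to read off $L(F_-,{\mathscr O}_X)$.

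When $F_+$ is a point the integral is the scalar $1/(1-t)^3$. When $F_+\cong P^1$, I would expand each factor $\frac{1}{1-e^{-x^-_j}t}=\frac{1}{1-t}\bigl(1-\tfrac{t\,x^-_j}{1-t}+\cdots\bigr)$ only to the linear term (the integrand lives on a curve), multiply by $td(P^1)=1+\tfrac12 c_1(P^1)$, and integrate using $\int_{P^1}c_1(P^1)=2$ and $\int_{P^1}\sum_j x^-_j=\deg N_{F_+}$; the adjunction relation $c_1(X)|_{F_+}=c_1(F_+)+c_1(N_{F_+})$ then rewrites $\deg N_{F_+}$ as $(-K_X\cdot F_+)-2$. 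When $F_+$ is a del Pezzo surface, $N_{F_+}$ is a single line bundle with $c_1(N_{F_+})=x^-$, and I would expand $\frac{1}{1-e^{-x^-}t}=\frac{1}{1-t}-\frac{t\,x^-}{(1-t)^2}+\frac{t(1+t)}{2(1-t)^3}(x^-)^2+\cdots$ to the quadratic term, multiply by $td(F_+)=1+\tfrac12 c_1(F_+)+\tfrac1{12}\bigl(c_1(F_+)^2+c_2(F_+)\bigr)$, and extract the degree-$4$ part. Noether's formula $\int_{F_+}\bigl(c_1(F_+)^2+c_2(F_+)\bigr)=12\chi({\mathscr O}_{F_+})=12$ eliminates one unknown, and the two remaining intersection numbers $c_1(F_+)\cdot c_1(N_{F_+})$ and $c_1(N_{F_+})^2$ — equivalently the del Pezzo degree $K_{F_+}^2$ together with anticanonical data of $X$ via adjunction — are exactly the quantities in terms of which Table 1 records the answer. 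The minimum component $F_-$ is handled identically, or simply via $t\mapsto t^{-1}$.

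The only genuine work is the del Pezzo case: Taylor-expanding $\frac{1}{1-e^{-x}t}$ correctly to second order, collecting the degree-$4$ term of its product with the Todd class, and then packaging the two surviving intersection numbers into whichever intrinsic invariants (del Pezzo degree, normal-bundle self-intersection, anticanonical degrees) are used in Table 1; the point and curve cases and the $t\leftrightarrow t^{-1}$ reflection are then immediate.
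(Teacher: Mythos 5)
Your proposal is correct and follows essentially the same route as the paper: substitute $E={\mathscr O}_X$ into (\ref{lefs}), Taylor-expand $\bigl(1-te^{-c_1(N)}\bigr)^{-1}$ in powers of $\frac{1}{1-t}$ to the order dictated by $\dim F_\pm$, and use $\chi({\mathscr O}_{F_\pm})=1$ (the paper likewise only writes out the surface case for $F_+$ and declares the rest analogous). Your extra observation that the $F_-$ entries of Table 1 are obtained from the $F_+$ entries by $t\mapsto t^{-1}$ (together with swapping the normal-bundle data) is correct and checks out against all three columns, so it is a legitimate shortcut, though not one the paper uses explicitly.
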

\begin{proof}
$F_{\pm}$ may be isolated points, rational curves or Fano surfaces.
Here we only give the computations  when $F_+$ is a surface, the
other cases are simpler and follow in the same way.

On $F_+$ the moment map $J$ takes maximum and the $\C^*$-action
on its normal bundle $N_{F_+}$ has a unique weight which  is
negative. Since $Ch({\mathscr O}_X|_{F_+})=1$ and $Td(F_+)=1+\frac{c_1(F_+)}{2}+\frac{c^2_1(F_+)+c_2(F_+)}{12},$ by formula (\ref{lefs}),
\begin{eqnarray}\begin{array}{rcl}
L(F_+,{\mathscr O}_X)=
\int_{F_+}\frac{1+\frac{c_1(F_+)}{2}+\frac{c^2_1(F_+)+c_2(F_+)}{12}}{1-te^{-c_1(N_{F_+})}}.\label{eqqq0}\end{array}\end{eqnarray}
Using Taylor expansion of the exponential function in the denominator of the integral
\begin{eqnarray}\begin{array}{rcl}
L(F_+,{\mathscr O}_X)=\int_{F_+}\frac{1+\frac{c_1(F_+)}{2}+\frac{c^2_1(F_+)+c_2(F_+)}{12}}
{1-t\Big[1-c_1(N_{F_+})+\frac{c_1^2(N_{F_+})}{2}\Big]}.\label{eqqq1}\end{array}\end{eqnarray}
Rewrite the denominator of the integral of the left hand side of (\ref{eqqq1}) as
\begin{eqnarray}\begin{array}{rcl}
{1-t\Big[1-c_1(N_{F_+})+\frac{c_1^2(N_{F_+})}{2}\Big]}=(1-t)\Big[1+\frac{t c_1(N_{F_+})}{1-t}-\frac{t c_1^2(N_{F_+})}{2(1-t)}\Big].
\label{eqqq2}\end{array}\end{eqnarray}
We have the following Taylor expansion of the reciprocal of $1+\frac{ tc_1(N_{F_+})}{1-t}-\frac{t c_1^2(N_{F_+})}{2(1-t)}$ as a power series of the variable $\frac{1}{1-t}:$
\begin{eqnarray}\begin{array}{rcl}
\frac{1}{1+\frac{t c_1(N_{F_+})}{1-t}-\frac{t c_1^2(N_{F_+})}{2(1-t)}}=1-\frac{tc_1(N_{F_+})}{1-t}+\frac{(1+t)tc_1^2(N_{F_+})}{2(1-t)^2}+\cdots, \label{eqqq3}
\end{array}\end{eqnarray}
note here it suffices to expand it to the quadric term since the integration in (\ref{eqqq1}) is on a complex surface. By (\ref{eqqq1}),  (\ref{eqqq2}) and (\ref{eqqq3}) we get
\begin{eqnarray}\begin{array}{rcl}
 L(F_+,{\mathscr O}_X)=\underset{F_+}{\int}\frac{(1+\frac{c_1(F_+)}{2}+\frac{c^2_1(F_+)+c_2(F_+)}{12})
(1-\frac{tc_1(N_{F_+})}{1-t}+\frac{(t+1)tc_1^2(N_{F_+})}{2(1-t)^2})}{1-t}.
\end{array}\end{eqnarray}
Note $F_+$ is a Fano surface, hence
$\chi({\mathscr O}_{F_+})=\int_{F_+}\frac{c^2_1(F_+)+c_2(F_+)}{12}=1.$
Therefore
\begin{eqnarray}\begin{array}{rcl}L(F_+,{\mathscr O}_X)=\frac{2(1-t)^2-t(1-t)c_1(F_+)c_1(N_{F_+})+t(1+t)
c_1^2(N_{F_+})}{2(1-t)^3}.
\end{array}\end{eqnarray}
\end{proof}

\begin{table}\label{fixeddata}
\begin{center}
\begin{tabular}{|c ||c |c|c |}
\hline Cases & point & line & surface \\ \hline\hline $L(F_-\!,\!\!{\mathscr O}_X\!)$ &
$\frac{-t^3}{(1-t)^3}$ & $\frac{(1-t\!)t^2\!+\!t^2
c_1\!(N_{F_-}\!)}{(1-t)^3}$&
$\frac{-2t\!(1-t\!)^2\!-\!t(1-t\!)c_1\!(F_-\!)c_1\!(N_{F_-}\!)\!-\!t(1+t\!)
c_1^2\!(N_{F_-}\!)}{2(1-t)^3} $
\\ \hline
$L(F_+\!,\!\!{\mathscr O}_X\!)$ & $\frac{1}{(1-t)^3}$ & $\frac{(1-t\!)\!-\!c_1\!(N_{F_+}\!)t}{(1-t)^3}$
&
$\frac{2(1-t\!)^2\!-\!t(1-t\!)c_1\!(F_+\!)c_1\!(N_{F_+}\!)\!+\!t(1+t\!)c_1^2\!(N_{F_+}\!)}{2(1-t)^3}
$
\\ \hline
\end{tabular}
\end{center}
\caption{Computations of $L(F_{\pm},{\mathscr O}_X)$}
\end{table}

Another important tool is  the Atiyah-Bott-Berline-Vergne
localization theorem \cite{au}:
\begin{theorem} \label{ablocal} Let $X$ be a compact symplectic manifold equipped with a Hamiltonian action of $S^1$
and denote $\{F_i\}$ the connected components of $X^{S^1}.$ Let
$\alpha\in H^*_{S^1}(X,\Z),$ Then
$$\begin{array}{rcl}\int_X \alpha =\sum_i\int_{F_i}\frac{\alpha|_{F_i}}{e^{S^1}(N_{F_i})},\end{array}$$
where $N_{F_i}$ is the normal bundle of $F_i$ and $e^{S^1}(N_{F_i})$
is its equivariant Euler class.
\end{theorem}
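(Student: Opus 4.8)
The plan is to prove this by the classical localization argument in equivariant cohomology, reducing the identity to linear algebra over the field of fractions of $H^*_{S^1}(\mathrm{pt})$. Work with rational coefficients and set $R=H^*_{S^1}(\mathrm{pt};\mathbb{Q})=\mathbb{Q}[u]$, $\mathcal R=\mathbb{Q}(u)$; the integrality of both sides of the asserted equality then follows a posteriori, since the left-hand side is genuinely $R$-valued. After $\otimes_R\mathcal R$ every equivariant cohomology in sight becomes a finite-dimensional $\mathcal R$-vector space, and the equivariant pushforward $\pi_{X*}=\int_X$, the restrictions $\alpha|_{F_i}=\iota_{F_i}^*\alpha$, the Gysin map $j_*$ for $j\colon X^{S^1}\hookrightarrow X$, and the equivariant Euler classes $e^{S^1}(N_{F_i})$ all extend functorially to these localized modules.

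First I would prove the two algebraic inputs. \textbf{(i) Localization to the fixed locus.} The inclusion $j$ induces an isomorphism $j^*\colon H^*_{S^1}(X)\otimes_R\mathcal R\xrightarrow{\ \sim\ }H^*_{S^1}(X^{S^1})\otimes_R\mathcal R$. This comes from the long exact sequence of the pair $(X,X^{S^1})$ once one knows $H^*_{S^1}(X\smallsetminus X^{S^1})\otimes_R\mathcal R=0$; and the latter holds because on $X\smallsetminus X^{S^1}$ the $S^1$-stabilizers are finite, so $H^*_{S^1}(X\smallsetminus X^{S^1};\mathbb{Q})\cong H^*\big((X\smallsetminus X^{S^1})/S^1;\mathbb{Q}\big)$ is a finite-dimensional $\mathbb{Q}$-vector space, hence a torsion $R$-module killed by inverting $u$ (this is seen cleanly from the Serre spectral sequence of $(X\smallsetminus X^{S^1})_{S^1}\to BS^1$, whose fibre cohomology is finite-dimensional; a tubular-neighborhood/Mayer--Vietoris step reduces $X^{S^1}$ to its complement). \textbf{(ii) Invertibility of the equivariant Euler class.} Splitting $N_{F_i}$ equivariantly into line bundles $L_k$ on which $S^1$ acts with nonzero weight $m_k$, one has $e^{S^1}(N_{F_i})=\prod_k\big(m_k u+c_1(L_k)\big)$, whose top $u$-term $u^{\operatorname{rk}N_{F_i}}\prod_k m_k$ is a nonzero scalar times a power of $u$; since the $c_1(L_k)$ are nilpotent, $e^{S^1}(N_{F_i})$ is a unit in $H^*_{S^1}(F_i)\otimes_R\mathcal R$, its inverse given by a finite geometric series.

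Then I would assemble the proof. The self-intersection formula $j^*j_*\beta=e^{S^1}(N_{X^{S^1}})\cdot\beta$, combined with (i) and (ii), shows that $j_*$ is an isomorphism after localization with inverse $\beta\mapsto(e^{S^1}(N))^{-1}j^*\beta$. Hence every $\alpha\in H^*_{S^1}(X)$ equals $j_*\beta$ for a unique localized class $\beta$, and applying $j^*$ gives $\beta=(e^{S^1}(N))^{-1}j^*\alpha$, i.e. $\beta|_{F_i}=(\alpha|_{F_i})/e^{S^1}(N_{F_i})$ on each component. Pushing forward to a point and using functoriality of the equivariant pushforward ($\pi_X=\pi_{X^{S^1}}\circ j$) yields
\[
\int_X\alpha=\pi_{X*}j_*\beta=\pi_{X^{S^1}*}\beta=\sum_i\pi_{F_i*}\Big(\frac{\alpha|_{F_i}}{e^{S^1}(N_{F_i})}\Big)=\sum_i\int_{F_i}\frac{\alpha|_{F_i}}{e^{S^1}(N_{F_i})},
\]
as claimed.

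The \textbf{main obstacle} is step (i): making precise that equivariant cohomology ``lives on the fixed set'' after inverting $u$ --- equivalently, that the $S^1$-equivariant cohomology of a space with only finite stabilizers is $R$-torsion --- together with the compatibility of the Gysin and pushforward maps with localization. This is the technical heart of the Atiyah--Bott / Berline--Vergne theory and is exactly what the cited reference supplies; I would either invoke it directly or sketch the spectral-sequence argument above. As an alternative one can give a de Rham proof in the Cartan model: pick an invariant metric, let $V$ be the generating vector field and $\theta$ the invariant $1$-form with $\theta(V)=1$ on $X\smallsetminus X^{S^1}$; then $d_u\theta=d\theta+u$ is invertible there, so $\sigma:=\theta\,(d_u\theta)^{-1}$ satisfies $d_u\sigma=1$ and hence $\alpha=d_u(\sigma\alpha)$ for any closed $\alpha$ off the fixed set. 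Applying Stokes' theorem on $X$ with small equivariant tubular neighborhoods of the $F_i$ removed, and letting their radii shrink to zero, produces precisely the local contributions $\int_{F_i}(\alpha|_{F_i})/e^{S^1}(N_{F_i})$; in that approach the difficulty migrates to this last local Thom-form computation near each $F_i$. I would present the algebraic argument as the main proof and relegate the de Rham version to a remark.
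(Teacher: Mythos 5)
Your argument is a correct rendition of the standard Atiyah--Bott localization proof, but there is nothing in the paper to compare it against: Theorem \ref{ablocal} is quoted as a known classical result with a citation to Audin's book, and the authors give no proof of it whatsoever (nor do they need to --- they only use it as a computational tool in Propositions \ref{rinterior}, \ref{rlocal} and \ref{localization}). That said, your sketch is sound. Step (i) (torsion-ness of $H^*_{S^1}$ of the locus with finite stabilizers, via $H^*_{S^1}(Y;\mathbb{Q})\cong H^*(Y/S^1;\mathbb{Q})$ and boundedness of degree), step (ii) (invertibility of $e^{S^1}(N_{F_i})$ after inverting $u$, using that all weights on the normal bundle are nonzero and the ordinary Chern classes are nilpotent), and the assembly via the self-intersection formula $j^*j_*=e^{S^1}(N)\cdot$ together with functoriality of the equivariant pushforward are exactly the ingredients of the Atiyah--Bott argument; the Cartan-model variant you relegate to a remark is the Berline--Vergne route. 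Two minor points you would want to nail down in a full write-up: the normal bundle need not split globally into equivariant line bundles, so (ii) should be run on the weight-space decomposition $N_{F_i}=\bigoplus_m N_m$ (whose equivariant Euler class has leading term $(mu)^{\operatorname{rk}N_m}$) or via the splitting principle; and the identity as stated with $\mathbb{Z}$ coefficients should be read, as you do, as an identity in the localized ring whose left side happens to be polynomial. Neither is a gap in substance.
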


\begin{proposition}\label{rinterior} For the equivariant cohomology classes $ c^{S^1}_1 (X)$ and $ (c^{S^1}_1 (X))^3$ integrate at the interior fixed curve $C_k$  in
the Localization  formula is computed as
$$\begin{array}{rcl}\int_{C_k}\frac{1}{e^{S^1}(N_{C_k})}&=&\frac{\alpha^+_k -\alpha^-_k}{\lambda^3};\\
\int_{C_k}\frac{c_1^{S^1}(X)|_{C_k}}{e^{S^1}(N_{C_k})}&=&\frac{-c_1 (N_{C_k}) -c_1(C_k) }{\lambda^2};\\
\int_{C_k}\!\frac{(\!c_1^{S^1}\!(X)\!)^3|_{C_k}}{e^{S^1}(N_{C_k})}\!&=&0.\\
\end{array}$$
\end{proposition}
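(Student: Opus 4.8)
The plan is to apply the Atiyah--Bott--Berline--Vergne formula (Theorem \ref{ablocal}) with $\alpha$ equal to $1$, to $c_1^{S^1}(X)$, and to $(c_1^{S^1}(X))^3$ in turn, and to isolate the contribution of the single component $C_k$. All equivariant classes are restricted to $C_k$ and manipulated inside $H^*_{S^1}(C_k)\cong H^*(C_k)[\lambda]$, where $\lambda$ denotes the standard generator of $H^*(BS^1)$ (the circle acts trivially on the fixed curve $C_k$, so the equivariant cohomology splits as a tensor product). Since the action is semifree and $C_k$ is an interior fixed curve, Proposition \ref{str} gives it weight type $(1,1)$, hence its normal bundle splits equivariantly as $N_{C_k}=L_k^+\oplus L_k^-$ with $S^1$ acting on $L_k^{\pm}$ with weight $\pm1$. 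Using the sign convention $e^{S^1}(L_k^{\pm})=c_1(L_k^{\pm})\pm\lambda$ we get
$$e^{S^1}(N_{C_k})=\bigl(c_1(L_k^+)+\lambda\bigr)\bigl(c_1(L_k^-)-\lambda\bigr).$$
Because $C_k$ is a curve, every class of degree $>2$ on it vanishes; in particular $c_1(L_k^+)c_1(L_k^-)=0$, and the product above collapses to $e^{S^1}(N_{C_k})=-\lambda^2\bigl(1-\lambda^{-1}(c_1(L_k^-)-c_1(L_k^+))\bigr)$.

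I would then invert this Euler class by a geometric series, keeping only the terms of degree $\le 2$ in $H^*(C_k)$ since each expression is ultimately integrated over a curve:
$$\frac{1}{e^{S^1}(N_{C_k})}=-\frac{1}{\lambda^2}+\frac{c_1(L_k^+)-c_1(L_k^-)}{\lambda^3}.$$
Integrating over $C_k$ annihilates the $-\lambda^{-2}$ summand (it has degree $0$ on $C_k$) and leaves $(\alpha_k^+-\alpha_k^-)/\lambda^3$, which is the first identity. For the remaining two integrals I would first compute the restricted equivariant first Chern class. From the equivariant splitting $TX|_{C_k}=TC_k\oplus L_k^+\oplus L_k^-$, on which $S^1$ acts with weights $0$, $+1$, $-1$ respectively, one obtains
$$c_1^{S^1}(X)|_{C_k}=c_1(C_k)+\bigl(c_1(L_k^+)+\lambda\bigr)+\bigl(c_1(L_k^-)-\lambda\bigr)=c_1(C_k)+c_1(N_{C_k}),$$
the two occurrences of $\lambda$ cancelling precisely because the two normal weights are opposite. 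Multiplying this degree-$2$ class by the expansion of $1/e^{S^1}(N_{C_k})$ found above, only the product with $-\lambda^{-2}$ survives the integration over $C_k$ (the product with the $\lambda^{-3}$ term is a degree-$4$ class), which yields $-\bigl(c_1(C_k)+c_1(N_{C_k})\bigr)/\lambda^2$ and proves the second identity. Finally $\bigl(c_1^{S^1}(X)|_{C_k}\bigr)^3=\bigl(c_1(C_k)+c_1(N_{C_k})\bigr)^3$ is a class of degree $6$ on a curve, hence $0$, so the third integral vanishes identically.

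Essentially everything here is bookkeeping: the substantive inputs are Theorem \ref{ablocal}, the weight-type statement of Proposition \ref{str}, and the equivariant splitting of $TX|_{C_k}$. The only points that demand a little care are fixing the sign and normalization conventions for $\lambda$ and $e^{S^1}$ consistently, and checking that the geometric-series truncation is legitimate in the graded ring $H^*_{S^1}(C_k)$; since $C_k$ is a curve, both are routine, and I do not anticipate any genuine obstacle in this proposition.
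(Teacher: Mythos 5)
Your proposal is correct and follows essentially the same route as the paper: split $N_{C_k}=L_k^+\oplus L_k^-$ equivariantly with weights $\pm1$, write $e^{S^1}(N_{C_k})=(\lambda+c_1(L_k^+))(-\lambda+c_1(L_k^-))$, invert by a geometric series truncated in degree $\le 2$ on the curve, and use the cancellation of the opposite weights to get $c_1^{S^1}(X)|_{C_k}=c_1(C_k)+c_1(N_{C_k})$. The paper only writes out the second identity and asserts the others are analogous; your write-up supplies all three, with the same conventions and the same degree-truncation argument.
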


\begin{proof} We only give a proof of the second equation, the rest are proved in the same way. Since the normal bundle
 $N_{C_k}=L^+_k \oplus L^-_k$ and $\C^*$ acts on $L^{\pm}_k$ with weights $\pm 1,$ the equivariant Euler class $N_{C_k}$ is
 $e^{S^1}(N_{C_k})=(-\lambda +c_1( L^-_k)))(\lambda+c_1 (L^+_k)).$ Since $T_X|_{C_k}=T_{C_k} \oplus N_{C_k},$  and $\C^*$ acts trivially on the tangent bundle $T_{C_k}$
 and on the normal bundle $N_{C_k}$ with weight $(-1,1),$ the equivariant Chern class
 $c^{S^1}_1(X)|_{C_k}=c_1 (C_k) +c_1(N_k).$ Therefore
 $$\begin{array}{rcl}\int_{C_k}\frac{c_1^{S^1}(X)|_{C_k}}{e^{S^1}(N_{C_k})}&=&\int_{C_k}\frac{c_1 (C_k) +c_1(N_k)}{(-\lambda +c_1( L^-_k)))(\lambda+c_1 (L^+_k))}\\
                                                          &=&-\frac{1}{\lambda^2}\int_{C_k}[c_1 (C_k) +c_1(N_k)](1+\frac{c_1( L^-_k)}{\lambda})(1-\frac{c_1( L^+_k)}{\lambda})\\
                                      &=&\frac{-c_1 (N_{C_k}) -c_1(C_k) }{\lambda^2}.\\

\end{array}$$
\end{proof}

\begin{proposition}\label{rlocal} At the maximum and minimum fixed point components $F_+$ and $F_-,$ the integral of the equivariant cohomology classes $1, c^{S^1}_1 (X)$ and $ (c^{S^1}_1 (X))^3$
 in the Localization  formula are computed as in Table 2--4.
\end{proposition}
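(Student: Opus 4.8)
The plan is to mirror the computation already carried out for the interior fixed curves in Proposition \ref{rinterior} and for the holomorphic Lefschetz contributions in Proposition \ref{leflocal}, the only new ingredient being the bookkeeping of the equivariant Euler class of the normal bundle $N_{F_\pm}$ according to the weight type of $F_\pm$. First I would recall from Proposition \ref{str} that $F_-$ has weight type $(0,m_+)$ with $m_+\in\{1,2,3\}$ and $F_+$ has weight type $(n_-,0)$ with $n_-\in\{1,2,3\}$, so that $N_{F_-}$ splits as a sum of $m_+$ line bundles on which $\C^*$ acts with weight $+1$ and $N_{F_+}$ splits as a sum of $n_-$ line bundles with weight $-1$. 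Correspondingly $F_-$ (resp.\ $F_+$) is an isolated point when $m_+=3$ (resp.\ $n_-=3$), a rational curve when $m_+=2$ (resp.\ $n_-=2$), and a del Pezzo surface when $m_+=1$ (resp.\ $n_-=1$), by Propositions \ref{hodgenumber} and \ref{maxmin}.

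Next, for each of these six cases I would write down the equivariant Euler class $e^{S^1}(N_{F_\pm})$ explicitly: for $F_-$ it is $\prod_{j=1}^{m_+}(\lambda+c_1(L_j))$ and for $F_+$ it is $\prod_{j=1}^{n_-}(-\lambda+c_1(L_j))$, where the $L_j$ are the normal line bundles. Then I would invert these classes as power series in $1/\lambda$, exactly as in the proof of Proposition \ref{rinterior}, keeping terms up to degree $\dim_\C F_\pm$ (degree $0$, $1$ or $2$ according to whether $F_\pm$ is a point, curve or surface). Restricting the equivariant Chern class, one has $c^{S^1}_1(X)|_{F_\pm}=c_1(F_\pm)+c_1(N_{F_\pm})\mp (\dim N_{F_\pm})\lambda$ — the weight contribution being $+m_+\lambda$ on $F_-$ and $-n_-\lambda$ on $F_+$ — and then $(c^{S^1}_1(X))^3|_{F_\pm}$ is obtained by cubing this and again truncating. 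Integrating the products $1/e^{S^1}(N_{F_\pm})$, $c^{S^1}_1(X)|_{F_\pm}/e^{S^1}(N_{F_\pm})$ and $(c^{S^1}_1(X))^3|_{F_\pm}/e^{S^1}(N_{F_\pm})$ over $F_\pm$, using $\int_{F_\pm}\chi(\mathscr O_{F_\pm})$-type normalizations (for a del Pezzo surface $\int c_1^2+c_2=12$, for $P^1$ $\int c_1=2$) and the adjunction-type relations between $c_1(F_\pm)$, $c_1(N_{F_\pm})$ and $c_1(X)|_{F_\pm}$, yields the entries of Tables 2--4. The statement itself is then just the assertion that these tabulated formulas are correct, so the "proof" is the organized display of the six computations.

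The routine but bulky part — and the one most prone to sign errors — is the simultaneous truncation of the two power series (the inverse Euler class and the cube of $c^{S^1}_1$) in the surface case $m_+=1$ or $n_-=1$, since there one must carefully track the $\lambda^{-1}$, $\lambda^{-2}$ and $\lambda^{-3}$ coefficients and the mixed terms $c_1(F_\pm)c_1(N_{F_\pm})$, $c_1^2(N_{F_\pm})$, etc. I expect this to be the main obstacle, and I would handle it the same way Proposition \ref{leflocal} handles the analogous Lefschetz computation: factor out the leading $\pm\lambda$ (resp.\ $(1-t)$) from the denominator, expand the remaining bracket to the needed order, multiply by the Todd/Chern data of $F_\pm$, and collect. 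The point and curve cases ($m_+,n_-\in\{2,3\}$) are strictly simpler and follow by the same mechanism with fewer terms, exactly as remarked in the proof of Proposition \ref{leflocal}. Once all six cases are assembled, comparison with Tables 2--4 completes the proof.
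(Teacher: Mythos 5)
Your proposal is correct and follows essentially the same route as the paper: split $N_{F_\pm}$ into weight-$(\pm1)$ line bundles according to the weight type, write $e^{S^1}(N_{F_\pm})$ as the corresponding product, restrict $c_1^{S^1}(X)$ as $c_1(F_\pm)+c_1(N_{F_\pm})\mp(\operatorname{rank}N_{F_\pm})\lambda$, expand $1/e^{S^1}$ in powers of $1/\lambda$ truncated at $\dim_\C F_\pm$, and integrate, with the point and curve cases being degenerate instances of the surface computation. The only superfluous ingredient is your appeal to Todd-class normalizations such as $\int(c_1^2+c_2)=12$, which enter the Lefschetz computation of Proposition \ref{leflocal} but play no role in the localization integrals of Tables 2--4.
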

\begin{proof}
We only give the proof when $F_{\pm}$ are curves or surfaces in Table 4. The rest cases in Table 3 and Table 2 are simpler and proved in the same way.
If $F_{\pm}$ are rational curves, split $N_{F_{\pm}}=L^{\pm}_1\oplus L^{\pm}_2$ such that $\C^*$ act on  $N_{F_-}$ with weight $(1,1)$ and on $N_{F_+}$ with weight $(-1,-1)$ respectively. As the proof of the second equation in Proposition \ref{rinterior}, we have

$$\begin{array}{rcl}
&&\int_{F_{\pm}}\frac{(c_1^{S^1}(X))^3|_{F_{\pm}}}{e^{S^1}(N_{F_{\pm}})}\\
&=&\int_{F_{\pm}}\frac{(c_1 (F_{\pm})\mp 2\lambda +c_1(N_{F_{\pm}}))^3}{(\mp\lambda +c_1( L^{\pm}_1))(\mp\lambda+c_1 (L^{\pm}_2))}\\
&=&\frac{1}{\lambda^2}\int_{F_{\pm}}[\mp 8\lambda^3+12\lambda^2({c_1 (F_{\pm}) +c_1(N_{F_{\pm}})})]{(1 \pm \frac{c_1( L^{\pm}_1)}{\lambda})(1 \pm \frac{c_1 (L^{\pm}_2)}{{\lambda}})}\\
&=&\frac{1}{\lambda^2}\int_{F_{\pm}}[\mp 8\lambda^3+12\lambda^2({c_1 (F_{\pm}) +c_1(N_{F_{\pm}})})](1 \pm \frac{c_1(N_{ F_{\pm}})}{\lambda})\\
&=&\int_{F_{\pm}}[12{c_1 (F_{\pm})+4c_1(N_{F_{\pm}})}].\\
\end{array}$$
If $F_{\pm}$ are Fano surfaces, then $\C^*$ act on $N_{F_{\pm}}$ with weight $\pm 1$ hence
$$\begin{array}{rcl}
&&\int_{F_{\pm}}
\frac{c_1^{S^1}(X)|_{F_{\pm}}}{e^{S^1}(N_{F_{\pm}})}\\
&=&\int_{F_{\pm}}\frac{c_1 (F_{\pm})\mp \lambda +c_1(N_{F_{\pm}})}{(\mp\lambda+c_{1}(N_{F_{\pm}}))}\\
&=&\mp\frac{1}{\lambda} \int_{F_{\pm}}[c_1 (F_{\pm})\mp \lambda +c_1(N_{F_{\pm}})][1 \pm \frac{c_{1}(N_{F_{\pm}})}{\lambda}+\frac{c^2_{1}(N_{F_{\pm}})}{\lambda^2}]\\
&=&\mp\frac{1}{\lambda}\int_{F_{\pm}}[\pm\frac{(c_1 (F_{\pm}) +c_1(N_{F_{\pm}}))c_1(N_{F_{\pm}})}{\lambda}\mp \frac{c^2_{1}(N_{F_{\pm}})}{\lambda}]\\
&=&-\frac{c_1 (F_{\pm}) c_1(N_{F_{\pm}})}{\lambda^2}.\\

&&\\
&&\int_{F_{\pm}}
\frac{(c_1^{S^1}(X)|_{F_{\pm}})^3}{e^{S^1}(N_{F_{\pm}})}\\
&=&\int_{F_{\pm}}\frac{(c_1 (F_{\pm})\mp \lambda +c_1(N_{F_{\pm}}))^3}{(\mp\lambda+c_{1}(N_{F_{\pm}}))}\\
&=&\mp\frac{1}{\lambda} \int_{F_{\pm}}(c_1 (F_{\pm})\mp \lambda +c_1(N_{F_{\pm}}))^3[1 \pm \frac{c_{1}(N_{F_{\pm}})}{\lambda}+\frac{c^2_{1}(N_{F_{\pm}})}{\lambda^2}]\\
&=&\mp\frac{1}{\lambda} \int_{F_{\pm}}[\mp \lambda^3+3\lambda^2 (c_1 (F_{\pm}) +c_1(N_{F_{\pm}}))\mp 3\lambda (c_1 (F_{\pm}) +c_1(N_{F_{\pm}}))^2]\\
&&\cdot [1 \pm \frac{c_{1}(N_{F_{\pm}})}{\lambda}+\frac{c^2_{1}(N_{F_{\pm}})}{\lambda^2}]\\
&=& \int_{F_{\pm}} [c^2_1(N_{F_{\pm}})-3c_1(N_{F_{\pm}})(c_1 (F_{\pm}) +c_1(N_{F_{\pm}}))+3(c_1 (F_{\pm}) +c_1(N_{F_{\pm}}))^2]\\
&=&{3c^2_1({F_{\pm}})+c^2_1(N_{F_{\pm}})}+3c_1({F_{\pm}})c_1(N_{F_{\pm}}).
\end{array}$$

\end{proof}

\begin{table}
\begin{center}
\begin{tabular}{|c ||c |c|c |}
\hline Cases & point & line & surface \\ \hline\hline $\int_{F_-}\!\!\frac{1}{e^{S^1}(N_{F_-})}$ &
$\frac{1}{\lambda^3}$ & $-\frac{c_1 (N_{F_-})}{\lambda^3}$&
$\frac{c^2_1 (N_{F_-})}{\lambda^3} $
\\ \hline
$\int_{F_+} \!\!\frac{1}{e^{S^1}(N_{F_+})}$ & $-\frac{1}{\lambda^3}$ & $\frac{c_1 (N_{F_+} )}{\lambda^3}$
&
$-\frac{c^2_1 (N_{F_+})}{\lambda^3}
$
\\ \hline
\end{tabular}
\end{center}
\caption{Computations of $\int_{F_{\pm}}\!\!\frac{1}{e^{S^1}\!(\!N_{F_{\pm}}\!)}$}
\label{eqvs}
\end{table}

\begin{table}
\begin{center}
\begin{tabular}{|c ||c |c|c |}
\hline Cases & point & line & surface \\ \hline\hline $\int_{F_-}\frac{ c^{S^1}_1 (X)|_{F_-}}{e^{S^1}(N_{F_-})}$ &
$\frac{3}{\lambda^2}$ & $\frac{c_1 (F_-)-c_1(N_{F_-})}{\lambda^2}$&
$\frac{-c_1 (F_{-}) c_1(N_{F_{-}})}{\lambda^2}$
\\ \hline
$\int_{F_+} \frac{c^{S^1}_1 (X)|_{F_+ }}{e^{S^1}(N_{F_+})}$ & $\frac{3}{\lambda^2}$ & $\frac{c_1 (F_+)-c_1(N_{F_+})}{\lambda^2}$
&
$\frac{-c_1 (F_{+}) c_1(N_{F_{+}})}{\lambda^2}
$
\\ \hline
\end{tabular}
\end{center}
\caption{Computations of $\int_{F_{\pm}}\frac{ c^{S^1}_1 (X)|_{F_{\pm}}}{e^{S^1}(N_{F_{\pm}})}$}
\end{table}

\begin{table}
\begin{center}
\begin{tabular}{|c ||c |c|c |}
\hline Cases & pt & line & surface\!\! \\ \hline\hline $\int_{F_-}\!\!\frac{ (c^{S^1}_1 (X)\!)^3\!|_{F_-}}{e^{S^1}(N_{F_-}\!)}$ &
$27$ & ${12c_1\! (F_-\!)\!+\!4c_1\!(N_{F_-}\!)}$&
$3c^2_1\!({F_{-}}\!)\!+\!c^2_1\!(N_{F_{-}}\!)\!+\!3c_1\!({F_{-}}\!)c_1\!(N_{F_{-}}\!)$
\\ \hline
$\int_{F_+}\!\! \frac{(c^{S^1}_1 (X)\!)^3\!|_{F_+ }}{e^{S^1}(N_{F_+}\!)}$ & $27$ & ${12c_1\! (F_+\!)\!+\!4c_1\!(N_{F_+}\!)}$
&
$3c^2_1\!({F_{+}}\!)\!+\!c^2_1\!(N_{F_{+}}\!)\!+\!3c_1\!({F_{+}}\!)c_1\!(N_{F_{+}}\!)
$
\\ \hline
\end{tabular}
\end{center}
\caption{Computations of $\int_{F_{\pm}}\frac{ (c^{S^1}_1 (X))^3|_{F_{\pm}}}{e^{S^1}(N_{F_{\pm}})}$}
\end{table}
As in the last section,  we may consider $X$ as a K\"ahler manifold with an algebraic
$\C^*$-action such that the circle subgroup $S^1$-action is
Hamiltonian with moment map $J.$ Denote the level set $J^{-1}(r)$ by $P_r.$ If $r$
is a regular value of the moment map $J,$ by \cite{hl94} the $\C^*$ act freely
on $X(r):=\{x\in X|\overline{\C^*\cdot x} \cap P_r\not=\emptyset \}$
and the GIT quotient $X(r)//\C^*$ is isomorphic to the
Marsden-Weinstein quotient $X_r:=P_r/S^1,$ and it is a K\"ahler
manifold. Hence $P_r$ is a circle bundle over the K\"ahler manifold
$X_r,$ we denote its Euler class by $e(P_r)\in H^2(X_r,\Z).$
Assume that $\epsilon\geq 0$ is small enough such that $c$ is the only
critical value in $[c-\epsilon,c+\epsilon].$  Using symplectic
cutting \cite{yb} we get a $6$-dimensional K\"ahler manifold
$X^{c+\epsilon}_{c-\epsilon}$ with semifree $\C^*$-action, which is
a $\C^*$-equivariant K\"ahler compactification of
$J^{-1}((c-\epsilon,c+\epsilon)).$ The  fixed points of the
$\C^*$-action on the cutting space $X^{c+\epsilon}_{c-\epsilon}$
consisting of three parts:  $X_{c-\epsilon},$ the  connected component where the
moment map takes minimum; $ X_{c+\epsilon},$  the connected component  where the
moment map takes maximum; and those fixed point in the critical level set
$J^{-1}(c).$

\begin{proposition}\label{localization} Let $\epsilon\geq 0$ be small number such that $c$ is the only
critical value in $[c-\epsilon,c+\epsilon].$
 Assume there are $k$ fixed points in $J^{-1}(c)$
whose wights of isotropic representation are $(-1,-1,1)$ and $l$ fixed points in $J^{-1}(c)$
whose wights of isotropic representation are    $(-1,1,1),$ and there are $m$ fixed curves $C_1,\cdots,C_m$  in $J^{-1}(c)$ with normal bundles
$N_{C_k}=L^+_i\oplus L^-_i$ for $i=1,\cdots,m.$
Then
\begin{eqnarray}\begin{array}{rcl} e^2(P_{c+\epsilon})- e^2(P_{c-\epsilon})=k-l+\sum_{k=1}^m(c_1(L^+_k)-c_1(L^-)).\end{array}\label{eqic}\end{eqnarray}
If $c$ is a minimum or a maximum of $J$ then $J^{-1}(c)$ is connected, in this case we have
$e^2(P_{c\pm \epsilon})=1$ when  $J^{-1}(c)$ is point, and $e^2(P_{c\pm \epsilon})=-c_1(N_C)$ when $J^{-1}(c)$ is a curve $C.$
\end{proposition}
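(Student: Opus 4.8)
The plan is to apply the Atiyah--Bott--Berline--Vergne localization formula (Theorem~\ref{ablocal}) on the cut space $W:=X^{c+\epsilon}_{c-\epsilon}$ to the trivial equivariant class $1\in H^*_{S^1}(W)$. For $\epsilon$ small enough that $c\pm\epsilon$ are regular values of $J$, the cut space is, as recalled just before the statement (symplectic cutting, \cite{yb}), a smooth compact K\"ahler $6$-fold with a semifree Hamiltonian $S^1$-action, and the left-hand side $\int_W 1$ of the localization formula vanishes for degree reasons ($1$ has degree $0<6=\dim_{\R}W$). Hence
\[
0=\sum_{F}\int_{F}\frac{1}{e^{S^1}(N_{F})},
\]
the sum running over the connected components $F$ of $W^{\C^*}$.

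Next I would use the description of $W^{\C^*}$ from that same paragraph: it is the disjoint union of the reduced space $X_{c-\epsilon}$, on which the moment map of $W$ is minimal, the reduced space $X_{c+\epsilon}$, on which it is maximal, and the part of $X^{\C^*}$ sitting in the critical level $J^{-1}(c)$, which --- since the cutting is performed at the levels $c\pm\epsilon$, away from that level --- retains in $W$ exactly the isotropic normal data it has in $X$, namely (by Proposition~\ref{str}) the $k$ points of weight $(-1,-1,1)$, the $l$ points of weight $(-1,1,1)$, and the $m$ curves $C_k$ with $N_{C_k}=L^+_k\oplus L^-_k$. Furthermore $X_{c\mp\epsilon}$ appears in $W$ as the zero section of the complex line bundle associated to the principal circle bundle $P_{c\mp\epsilon}\to X_{c\mp\epsilon}$, so $c_1(N_{X_{c\mp\epsilon}})=\pm e(P_{c\mp\epsilon})$ and therefore $c_1^2(N_{X_{c\mp\epsilon}})=e^2(P_{c\mp\epsilon})$.

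Now I would evaluate the contributions; those of the two reduced surfaces come from the ``surface'' column of Table~2 (whose entries are purely topological and hold for $W$ even though $W$ need not be Fano), giving $\int_{X_{c-\epsilon}}1/e^{S^1}(N)=e^2(P_{c-\epsilon})/\lambda^3$ and $\int_{X_{c+\epsilon}}1/e^{S^1}(N)=-e^2(P_{c+\epsilon})/\lambda^3$. An isolated fixed point of isotropic weights $(a_1,a_2,a_3)$ contributes $1/(a_1a_2a_3\lambda^3)$, so the $k$ points of weight $(-1,-1,1)$ contribute $k/\lambda^3$ altogether and the $l$ points of weight $(-1,1,1)$ contribute $-l/\lambda^3$; each curve $C_k$ contributes $(c_1(L^+_k)-c_1(L^-_k))/\lambda^3$ by the first identity of Proposition~\ref{rinterior}. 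Substituting into the displayed vanishing sum and cancelling $\lambda^{-3}$ yields exactly (\ref{eqic}).

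For the last statement, assume $c$ is a minimum of $J$ (the maximum case follows by reversing the direction of $J$). Then only the top cut is made, $W=X^{c+\epsilon}_{c}$, and $W^{\C^*}=F_-\sqcup X_{c+\epsilon}$, where $F_-=J^{-1}(c)$ is a point, a rational curve, or a del Pezzo surface (Propositions~\ref{maxmin} and~\ref{hodgenumber}) and there are no interior fixed points because $J^{-1}(c)$ is connected. Localization with $1$ then gives $0=\int_{F_-}1/e^{S^1}(N_{F_-})-e^2(P_{c+\epsilon})/\lambda^3$: if $F_-$ is a point all its isotropic weights equal $+1$, so $e^2(P_{c+\epsilon})=1$; if $F_-$ is a curve $C$, the ``line'' column of Table~2 gives $\int_{F_-}1/e^{S^1}(N_{F_-})=-c_1(N_C)/\lambda^3$, so $e^2(P_{c+\epsilon})=-c_1(N_C)$. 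Throughout, the only genuinely delicate steps are bookkeeping ones: tracking signs and isotropic weights when reading off Table~2 and Proposition~\ref{rinterior} (the minimal reduced space carries positive weight, the maximal one negative), and invoking that symplectic cutting yields a compact K\"ahler manifold whose extremal fixed loci are the reduced spaces with normal bundles $P_{c\mp\epsilon}$; the recurring sign ambiguity $c_1(N)=\pm e(P)$ is harmless since only $e^2(P)=c_1^2(N)$ enters.
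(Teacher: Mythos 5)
Your proposal is correct and follows essentially the same route as the paper: the paper likewise integrates $1\in H^*_{S^1}(X^{c+\epsilon}_{c-\epsilon},\Z)$ over the cut space via Theorem \ref{ablocal}, reads off the contributions $e^2(P_{c-\epsilon})/\lambda^3$, $-e^2(P_{c+\epsilon})/\lambda^3$, $k/\lambda^3$, $-l/\lambda^3$ and $(c_1(L^+_i)-c_1(L^-_i))/\lambda^3$, and obtains the second part by the same one-sided cuts $X^{c+\epsilon}_{c}$ and $X^{c}_{c-\epsilon}$. Your added bookkeeping (identifying $c_1^2(N_{X_{c\mp\epsilon}})=e^2(P_{c\mp\epsilon})$ and noting the Table 2 entries are purely topological) only makes explicit what the paper leaves implicit.
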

\begin{proof}
On the cutting space, integrate $1\in
H^*_{S^1}(X^{c+\epsilon}_{c-\epsilon},\Z)$ via using Theorem
\ref{ablocal}, then
$$\begin{array}{rcl}&&\int_{X_{c-\epsilon}}\frac{1}{\lambda+e(P_{c-\epsilon})}+
\int_{X_{c+\epsilon}}\frac{1}{-\lambda-e(P_{c+\epsilon})}\\
&+&\frac{k}{(-\lambda)^2\lambda}+\frac{l}{(-\lambda)\lambda^2}+\sum_{i=1}^m\int_C\frac{1}{(\lambda+c_1(L^+_i))
(-\lambda+c_1(L^-_i))}=0.\end{array}$$
expanding it we get (\ref{eqic}).  We consider the cutting space $X^{c+\epsilon}_{c}$ if $c$ is a minimum and the cutting space $X^c_{c-\epsilon}$ if $c$ is a maximum, integrate $1$ on these spaces we get the second part of Proposition \ref{localization}.
\end{proof}

\section{Classification when $X^{\C^*}$ has exactly two connected components and examples for general $\C^*$-actions }\label{sect6}

\begin{theorem} \label{two}Suppose that $X^{\C^*}$ has exactly two connected component $F_+$ and $F_-.$
\begin{enumerate}
\item If both $F_+$ and $F_-$ are isolated fixed points, such a Fano $3$-fold $X$ does not
exist;

\item  If both $F_+$ and $F_-$ are fixed surfaces, then $F_+$
is biholomorphic to $F_-$ and $X$ is a $P^1$ bundle over $F_+;$

\item  If one of $F_+$ and $F_-$ is a fixed surface and the other is an isolated fixed point,
then $X=P^3;$

\item  If one of $F_+$ and $F_-$ is a fixed curve and the other is an isolated fixed
point, such $X$ does not exist.
\end{enumerate}
\end{theorem}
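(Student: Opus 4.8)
The plan is to read everything off the fixed-point data. Because $X^{\C^*}=F_-\sqcup F_+$ has exactly two components, there are no interior fixed curves and no interior fixed points, so in the notation of Proposition~\ref{hodgenumber} we have $a=b=c=0$; hence $b_3(X)=2h^{1,2}(X)=0$, and by Proposition~\ref{hodgenumber}(A) together with Proposition~\ref{maxmin} each of $F_+,F_-$ is a point, a copy of $P^1$, or a del Pezzo surface. Writing $n_-=3-\dim_{\C}F_+$ for the number of negative isotropy weights at $F_+$, Proposition~\ref{hodgenumber}(B) collapses to the pair of equalities
\[
h^{1-n_-,\,1-n_-}(F_+)+h^{1,1}(F_-)\;=\;h^{1,1}(X)\;=\;h^{2-n_-,\,2-n_-}(F_+)+h^{2,2}(F_-),
\]
where symbols $h^{p,p}$ with $p<0$ are zero and every outer term is determined by the type of $F_\pm$ (for a del Pezzo surface $S$: $h^{0,0}(S)=h^{2,2}(S)=1$, $h^{1,1}(S)=b_2(S)$; for $P^1$: $h^{0,0}=h^{1,1}=1$; for a point only $h^{0,0}=1$). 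Recall $h^{1,1}(X)=b_2(X)\ge 1$ since $X$ is projective, and that replacing the action by $t\mapsto t^{-1}$ interchanges $F_+$ and $F_-$, so in (3) and (4) I may assume the point is $F_+$.

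Items (1), (4) and most of (3) are then immediate. In (1) both $F_\pm$ are points, $n_-=3$, and the left equality gives $h^{1,1}(X)=0$, contradicting $b_2(X)\ge1$. In (4), $F_+$ is a point ($n_-=3$) and $F_-\cong P^1$, so the two equalities read $h^{1,1}(X)=h^{1,1}(P^1)=1$ and $h^{1,1}(X)=h^{2,2}(P^1)=0$: impossible. In (3), letting $S$ be the surface component and comparing the two equalities (with $n_-=1$ if $S=F_+$, $n_-=3$ if $S=F_-$) forces $h^{1,1}(S)=1$, so the del Pezzo surface $S$ is $P^2$ and $b_2(X)=1$. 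To identify $X$ I then apply the holomorphic Lefschetz fixed point formula (\ref{lefsc}): since $H^{>0}(X,\mathscr{O}_X)=0$ its left side is $1$ for all $t\in\C^*$, and substituting the surface entry and the point entry from Table~1, clearing the denominator and comparing coefficients of $t^k$, gives $c_1(N_S)^2=1$ and $c_1(S)\cdot c_1(N_S)=3$, i.e. $N_S\cong\mathscr{O}_{P^2}(1)$. Thus $S$ is a smooth ample divisor with $S^3=1$; from $0\to\mathscr{O}_X\to\mathscr{O}_X(S)\to\mathscr{O}_S(1)\to0$, Kodaira vanishing and $H^1(X,\mathscr{O}_X)=0$ the system $|S|$ is base-point free with $h^0(\mathscr{O}_X(S))=4$, the induced morphism $X\to P^3$ has degree $S^3=1$, and since $\rho(X)=b_2(X)=1$ it contracts no divisor; hence $X\cong P^3$. (Equivalently, $P^3$ is the only Fano $3$-fold of Picard number one containing a plane, by the Iskovskih--Mori--Mukai classification \cite{mm1}.)

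Item (2) is the substantive one, and I expect it to be the main obstacle, since it is a global statement rather than a numerical exclusion. Here $F_\pm$ are del Pezzo divisors, $n_-=1$, and the two equalities yield only $b_2(F_+)=b_2(F_-)$. The plan is to extract the bundle structure from the Bialynicki--Birula decomposition \cite{bb}: with exactly two fixed components it presents $X$ as the disjoint union of the open dense big cell, which is $\C^*$-equivariantly the total space of the line bundle $N:=N_{F_-}\to F_-$ with its linear fibrewise action and zero section $F_-$, and the closed stratum $F_+$, a smooth irreducible divisor. So $X$ is a smooth $\C^*$-equivariant completion of the total space of $N$ obtained by adjoining the single divisor $F_+$ — as is the $P^1$-bundle $P(N\oplus\mathscr{O}_{F_-})\to F_-$, whose section at infinity $P(N)$ is $\cong F_-$. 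I will show these two completions coincide: a neighbourhood of $F_+$ in $X$ is $\C^*$-equivariantly the total space of the line bundle $N_{F_+}$ (weight $-1$), attached along the ``end'' of the total space of $N$, and that end is the principal $\C^*$-bundle of frames of $N$, whose orbit space is canonically $F_-$; consequently the flow-down map extends across $F_+$ to a morphism $X\to F_-$ that is a $P^1$-bundle and restricts to an isomorphism $F_+\to F_-$ on the adjoined divisor. Hence $F_+\cong F_-$ and $X$ is a $P^1$-bundle over $F_+$, which is (2). The point needing the most care is exactly this extension of the flow-down morphism over the codimension-one divisor $F_+$ — equivalently, the uniqueness of the equivariant completion, i.e. that the birational map $P(N\oplus\mathscr{O}_{F_-})\dashrightarrow X$, an isomorphism on the big cell, is nowhere a nontrivial modification; this is a standard feature of Bialynicki--Birula theory when the source and sink are both divisors, but I will write it out.
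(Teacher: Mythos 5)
Your proposal is correct, and it reaches all four conclusions by a route that is genuinely different from the paper's. The paper argues symplectically throughout: for (A) it notes that a Morse function with exactly two nondegenerate critical points makes $X$ diffeomorphic to $S^6$, which is not symplectic; for (D) it compares the Marsden--Weinstein reduced spaces near the two ends ($P^2$ versus a ruled surface) and uses that they must all be diffeomorphic; for (C) it identifies the reduced space with $P^2$ and realizes $X$ as $P^2$ with a $6$-cell attached, hence $P^3$; and for (B) it quotes the proof of Proposition \ref{maxmin} to identify both $F_\pm$ with $X_{red}$ and asserts the induced $P^1$-bundle structure. You instead work cohomologically and algebro-geometrically: (1) and (4) fall out of the Carrell--Sommese formula of Proposition \ref{hodgenumber} plus $b_2(X)\ge 1$, which is cleaner and avoids the $S^6$ detour; in (3) you pin down $N_S\cong{\mathscr O}_{P^2}(1)$ via the holomorphic Lefschetz formula (your coefficient extraction giving $c_1^2(N_S)=1$, $c_1(S)c_1(N_S)=3$ checks out against Table 1) and then identify $X$ with $P^3$ through the linear system $|S|$ -- this is arguably more rigorous than the paper's cell-attachment argument, which only directly yields a diffeomorphism; and in (2) you replace the reduced-space/GIT picture by the Bialynicki--Birula decomposition, writing $X$ as the line bundle $N_{F_-}$ glued to the line bundle $N_{F_+}$ along the common principal $\C^*$-bundle $X\setminus(F_-\cup F_+)$, whose two quotients identify $F_-$ with $F_+$ and exhibit $X=P(N_{F_-}\oplus{\mathscr O})$. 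The one step you flag -- extending the flow-down morphism across the divisor $F_+$ -- is indeed the only place requiring care, and your outline (the end of $\mathrm{Tot}(N_{F_-})$ is the frame bundle of $N_{F_-}$, and the weight-$(1,0)$ structure of $F_+$ makes $X^-_{F_+}$ an open line-bundle neighbourhood) is the standard and correct way to close it; the paper's own treatment of this point is no more detailed than yours. Net effect: your argument trades the symplectic cutting/reduction machinery for purely algebraic tools already set up in Sections \ref{sect4} and \ref{sect5}, at the cost of a slightly longer identification of $P^3$ in case (3).
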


\begin{proof}
 (A). The moment map $J$ is a nondegenerate Morse function, it is well know in this case that $X$ is diffeomorphic to
$S^6,$ but $S^6$ is not even  a symplectic manifold. (B). Since  the the Marsden-Weinstein reduced space $X_r=J^{-1}(r)$ will not change  diffeomorphism type if $J$ does not cross its critical value, we denote the Marsden-Weinstein reduced space by $X_{red}.$ From the proof of
Proposition \ref{maxmin}, both $F_+$ and $F_-$ are isomorphic to the Marsden-Weinstein reduced space
$X_{red}$ and the $\C^*$-action induces a holomorphic $P^1$ bundle
structure of $X.$ (C). Assume that $J(X)=[0,h],$ and without loss of
generality we assume $J^{-1}(0)$ is a point and $J^{-1}(h)$ is
a surface. For a sufficiently small positive number $\epsilon<h,$ we
know $X_{\epsilon}=J^{-1}(\epsilon)/S^1$ is $P^2.$ Hence
$X_{h-\epsilon}\cong X_{red}\cong P^2$ and $X$ is obtained by attaching a cell of index zero to $P^2,$ therefore
$X=P^2\cup_{J^{-1}(h-\epsilon)}D^6\cong P^3.$ (D). Near one
vertex of the interval $J(X),$ the Marsden-Weinstein reduced space $X_{red}$ is isomorphic to $P^2$ and near the other vertex of the interval
$J(X),$  the Marsden-Weinstein reduced space $X_{red}$ is a holomorphic $P^1$ bundle over $P^1.$
 It
is impossible since $P^2$ is not diffeomorphic to the $P^1$ bundle over
$P^1.$
\end{proof}

\begin{example}\label{exp1}{\rm Clearly $P^3$ admits  semifree $\C^*$-actions whose fixed points
$X^{\C^*}=P^2\cup{pt}$ or $P^1\cup P^1.$ For example for the action
$$t\cdot[z_0:z_1:z_2:z_3]=[tz_0:tz_1:z_2:z_3],$$
has a moment map
$$J([z_0:z_1:z_2:z_3])=\frac{|z_0|^2+|z_1|^2}{|z_0|^2+|z_1|^2+|z_2|^2+|z_3|^2}.$$
Clearly $J(P^3)=[0,1]$ and $X^{\C^*}=P^1\cup P^1.$}
\end{example}

\begin{example}\label{exp11}{\rm For the semifree $\C^*$-action on $P^3$ with two fixed lines
 $P^1\cup P^1.$ If we blow up one of the fixed line, then we get a Fano threefold  with semifree $\C^*$ action whose fixed point set still has two connected components, one is a line the other is a ruled surface.}
\end{example}

\begin{example}\label{exp3}{\rm
Let $X=Proj({\mathscr O}_{P^2}\oplus {\mathscr O}_{P^2}(k)),$
a $P^1$ bundle over $P^2$ with Euler number $k;$ or $X=Proj(T_{P^2}),$ the projective tangential bundle  of $P^2.$
 Let $\C^*$ act
  on the $P^2$ by $t\cdot
[z^0,z^1,z^2]=[tz_0,z_1,z_2],$ then the action is semifree, it induces natural action a the algebraic vector bundles $E$ and so does on the projective bundle $Proj(E).$ The fixed points
points have two connected components, one of them is $P^1$ and the other is a $P^1$ bundle over $P^1$. If $E=T_{P^2}$ or ${\mathscr O}_{P^2}\oplus {\mathscr O}_{P^2}(k)$ with $k\not=0$ then the induce actions on the projective bundle $Proj(E)$ is not semifree. When $E$ is a trivial bundle clearly there is an induced semifree action on  $Proj(E).$
}\end{example}

\begin{example}\label{exppaper1}{\rm
Let $\C^*$ acts  $ P^3$ by $$t\cdot([x_1,x_2,x_3,x_4])=([tx_1,x_2,x_3,x_4]).$$
The blow-up space $X$ of $P^3$ along the fixed line $\{[x_1,x_2,x_3,x_4]\in P^3|x_1=x_2=0\}$ is defined by
 $$\{([x_1,x_2,x_3,x_4],[y_1,y_2])\in P^3\times P^1|x_1y_2=x_2y_1\}.$$
 There is a lifted semifree action on $X$
  $$t\cdot([x_1,x_2,x_3,x_4],[y_1,y_2])=([x_1,tx_2,tx_3,tx_4],[y_1,ty_2])$$ with moment map
$$J([x_1,x_2,x_3,x_4],[y_1,y_2])=\frac{|x_2|^2+|x_3|^2+|x_4|^2}{|x_1|^2+|x_2|^2+|x_3|^2+|x_4|^2}+\frac{|y_2|^2}{|y_1|^2+|y_2|^2}.$$
The fixed point set $X^{\C^*}$ has three connected components: the maximum component $\{([0,x_2,x_3,x_4][0,1])\}\cong P^2,$ an interior fixed rational curve $\{([0,0,x_3,x_4][1,0])\}\cong P^1,$ the minimum component
is an isolated fixed point $\{([1,0,0,0][1,0])\}.$

}\end{example}

\begin{example}\label{exp5}{\rm
Let $\C^*$ acts  $P^2\times P^2$ by $$t\cdot([x_1,x_2,x_3],[y_1,y_2,y_3])=([x_1,x_2,t^2x_3],[ty_1,ty_2,y_3])$$
Let $X$ be the divisor of bidegree $(1,2)$ of $P^2\times P^2$ defined by $x_1y_1^2+x_2y_2^2 +x_3y_3^2=0.$  The
fixed points are two rational curves. But this action is  not semifree.
 }\end{example}

\begin{example}\label{exp4}{\rm
$SL_2(\C)$ acts on the polynomial ring $R = k[x,y]$ by $ x \rw ax+cy, y \rw
bx+dy$ for $s=\begin{bmatrix}a& b\\ c&d\end{bmatrix}\in SL_2(\C)$. Let $R_n\subset R$   denote the vector space of the
homogeneous polynomials of degree $n.$ Then $R_n$ is the irreducible $SL_2(\C)$-module
of degree $n+1.$ Let $f(x,y)=\sum_{j=0}^n C_n^j a_jx^{n-j}y^j $ be a non-zero homogeneous
polynomial. We take $(a_0,a_1, ,\cdots,a_n )$ as homogeneous coordinates on
$Proj(R_n )$ on which there is an induced $SL_2(\C)$ action. Take $f_6=xy(x^4-y^4)$ and $f_{12}=xy(x^{10}+11x^5y^5 +y^{10}).$
The {\it Mukai-Umemura Fano $3$-folds} \cite{mu} are defined by $X_5 =\overline{SL_2(\C)\cdot f_6}$ and $X_{12} =\overline{SL_2(\C)\cdot f_{12}},$ the closers of the orbits passing through $f_6$ and
$f_{12}$ respectively. Then there is naturally induced $\C^*$ actions on $X_5$ and $X_{22}.$ But these actions are not semifree since
they are the restrictions of the action $t\cdot [a_0:a_1:\cdots:a_n]=[t^{-n}a_0:t^{-n+2}a_1:\cdots:t^n a_n]$  on $Proj(R_n)$ with $n=5$ and $n=12$ respectively.
 }\end{example}

\begin{proposition}\label{blowu} Let $Y$ be a $3$-fold with a semifree  $\C^*$-action and $C\subset Y$ a smooth $C^*$-invariant curve.
Let $X$ be the blow-up of $Y$ along $C$ with exceptional divisor $E.$ Then there is a semifree action of $\C^*$ on $X$ such that $E$ is $\C^*$-invariant.
If $C$ is point-wise fixed then $E$ is point-wise fixed or the $\C^*$-action on $E$ has exactly two fixed curves isomorphic to $C.$
If $C$ is not point-wise fixed then  the  $\C^*$-action on $E$ has only isolated fixed points or fixed  curves.
\end{proposition}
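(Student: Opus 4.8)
The plan is to lift the given action to $X$ and then carry out a local weight analysis along $E$. Since $C$ is a smooth $\C^*$-invariant subvariety of $Y$, the action lifts uniquely to $X=\mathrm{Bl}_C Y$ and the exceptional divisor $E=\Pj(N_{C/Y})$ is automatically invariant; so the real content of the statement is (i) that this lifted action is \emph{semifree} and (ii) the asserted description of $X^{\C^*}\cap E$. Both are purely local near $C$. First I would fix $p\in C$ and, using Sumihiro's linearisation (or the Luna slice theorem), choose $\C^*$-equivariant holomorphic coordinates $(z_1,z_2,z_3)$ near $p$ in which the action is linear and $C=\{z_2=z_3=0\}$; semifreeness of the action on $Y$ then forces the weights $w_1,w_2,w_3$ of $z_1,z_2,z_3$ to lie in $\{-1,0,1\}$. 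Writing $X$ in the two standard affine charts of the blow-up, one reads off, at a point $q\in E$ over $p$, the three weights of $T_qX$: the weight of $T_pC$ (which is $w_1$), the weight of the fibre direction of $E=\Pj(N_{C/Y})$ (which is $w_2-w_3$ or $w_3-w_2$), and the weight of the tautological normal line $N_{E/X}|_q={\mathscr O}_{\Pj(N_{C/Y})}(-1)|_q$, which is just the weight of the line $q\subset (N_{C/Y})_p$.

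Next I would treat the case that $C$ is point-wise fixed. Then $w_1=0$, and $N_{C/Y}$, being an equivariant rank-$2$ bundle over a trivial-action base, splits as $\bigoplus_{w}N_w$ into isotypic subbundles; it cannot be isotypic of weight $0$ (that would give a $3$-dimensional fixed component, contradicting effectiveness). If $N_{C/Y}$ is isotypic of some weight $w\neq0$, the fibrewise action on $E$ is trivial, so $E$ is point-wise fixed, and the weights of $T_qX$ at $q\in E$ are $(0,0,w)$ — semifree. Otherwise $N_{C/Y}=N'\oplus N''$ with distinct weights $w'\neq w''$; then each fibre $\Pj^1$ of $E$ has exactly the two fixed points $\Pj(N')$ and $\Pj(N'')$, so $X^{\C^*}\cap E$ is precisely the two sections $\Pj(N')\cong C$ and $\Pj(N'')\cong C$ and nothing else, giving part (ii); the weights of $T_qX$ along these sections are $(0,w''-w',w')$ and $(0,w'-w'',w'')$, which lie in $\{-1,0,1\}$ exactly when $|w'-w''|\le1$ — true in the cases occurring here ($C$ a curve inside a fixed surface, so $\{w',w''\}=\{0,\pm1\}$, or $C$ an entire fixed component $F_\pm$, which is the isotypic case).

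Then the case $C$ not point-wise fixed. A smooth invariant curve with a nontrivial $\C^*$-action is rational with the standard linear action (genus $\ge1$ carries no nontrivial $\C^*$, and semifreeness rules out the actions $[u:v]\mapsto[t^k u:v]$ on $\Pj^1$ with $k\ge2$), so $C\cong\Pj^1$ with exactly two fixed points $p_0,p_\infty$ and $E$ is a $\Pj^1$-bundle over $C$. Over a non-fixed point of $C$ the action is free, so $X^{\C^*}\cap E$ lies over $\{p_0,p_\infty\}$. Splitting the equivariant bundle $N_{C/Y}$ over $\Pj^1$ into equivariant line bundles, I would observe: over $p_0$, if its two fibre-weights coincide then the fibre $E_{p_0}\cong\Pj^1$ is point-wise fixed (a fixed curve), and if they differ then $E_{p_0}$ carries exactly two isolated fixed points; likewise over $p_\infty$. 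Hence $X^{\C^*}\cap E$ is a finite union of isolated points and/or fibres — only isolated points or fixed curves, which is (iii). The weights of $T_qX$ at such a $q$ over $p_0$ are $(\epsilon,\,b-a,\,a)$, where $\epsilon=\pm1$ is the weight of $T_{p_0}C$ and $(a,b)$ are the two normal weights of $C$ at $p_0$; these lie in $\{-1,0,1\}$ unless $\{a,b\}=\{+1,-1\}$.

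The hard part is the semifreeness verification, not the existence of the lift or the shape of $E^{\C^*}$. Everything reduces to the weight bookkeeping above, and the single thing that must be ruled out is a weight-$\pm2$ eigendirection on $TX$ along $E$ — equivalently, that the two weights of a blown-up rank-$2$ normal subspace of $C$ at a fixed point are never $+1$ and $-1$ simultaneously. I expect to dispose of this by enumerating which curves actually get blown up: a point-wise fixed curve lying in a fixed surface has a normal weight $0$; a fixed component $F_\pm$ has isotypic normal bundle; and for an invariant curve joining $F_-$ to $F_+$ the nonzero isotropy weights are all $+1$ at the $F_-$ end and all $-1$ at the $F_+$ end (since $J$ is minimal, resp. maximal, on $F_\mp$). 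Checking these small cases, together with the accompanying statements about $E$ and its $\C^*$-fixed set, completes the proof.
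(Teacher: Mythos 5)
Your proposal follows essentially the same route as the paper's proof: identify $E$ with $Proj(N_{C})$, split $N_{C}$ equivariantly, and read off the fixed locus of $E$ together with the isotropy weights from the normal weights of $C$. The difference is that you carry out the weight bookkeeping on $T_qX$ at points of $E$ explicitly, and in doing so you isolate the one point the paper's proof glosses over. The paper asserts that when $C$ is point-wise fixed the normal weights are $(\pm1,\pm1)$ or $(0,\pm1)$ and that in the former case $E$ is point-wise fixed; this silently conflates $(1,1)$ and $(-1,-1)$ with $(1,-1)$. As you correctly observe, if the two normal weights at a fixed point of $C$ are $+1$ and $-1$ --- precisely the situation of an interior fixed curve, of weight type $(1,1)$ in the paper's notation --- then the fibre direction of $E$ along the two fixed sections carries weight $\pm2$, so the (unique) lift of the action to $X$ is \emph{not} semifree and the first sentence of the proposition fails as literally stated. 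The paper's sentence ``Since the action on $X$ is semifree, the induced action on $N_C$ must have nonzero weight'' is in any case circular as written (semifreeness on $X$ is the thing to be proved; what is actually used is effectiveness and semifreeness on $Y$).

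Your proposed repair --- verifying semifreeness only for the curves that are actually blown up (curves lying in a fixed surface, whole extremal components $F_\pm$, and orbit closures joining $F_-$ to $F_+$) --- is the right way to salvage the statement in the context where the paper applies it, but be aware that this converts the proposition into a conditional statement with hypotheses absent from its wording; moreover your enumeration omits orbit closures emanating from an interior isolated fixed point of weight type $(1,2)$ or $(2,1)$, whose normal weights at that end can again be $\{+1,-1\}$. So the verdict is: same method as the paper, executed more carefully; the residual defect lies in the proposition (and the paper's proof of it) rather than in your argument, and your final paragraph should be promoted from a plan to an explicit case check with the added hypothesis stated.
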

\begin{proof}
Let $N_{C}$ be the normal bundle in $Y$ then $E$ is isomorphic to the projective bundle $Proj(N_C).$ If $C$ is point-wise fixed then the induced action of
$\C^*$ on the tangent bundle of $C$ must have weight $0.$ Since the action on $X$ is semifree, the induced action on the normal bundle $N_C$ must have nonzero weight.
 Hence it acts on the normal bundle $N_C$ with weights $(\pm 1,\pm 1)$ or $(0,\pm 1).$ In the former case  $E$ is  point-wise fixed, in the later case  the $\C^*$-action on $E$ has exactly two fixed curves, both of them are isomorphic to $C.$
Similarly if  $C$ is not point-wise fixed and the $\C^*$ acts on the fibre of $N_C$ with two different weights, then  the  $\C^*$-action on $E$ has only isolated fixed points; if  $C$ is not point-wise fixed and the $\C^*$ acts on the fibre of $N_C$ with the same weights, then  the  $\C^*$-action on $E$ has only fixed curves.
\end{proof}

\begin{remark}\label{blowmo}{\rm Let $Y$ be a $3$-fold with a semifree  $\C^*$-action and
 $X$ is the blow-up of $Y$ along a fixed point or an invariant curve $C$ with exceptional divisor $E.$
 If $E$ is not point-wise fixed then there are isolated fixed point or fixed curve on $E$  which are interior isolated fixed point or interior fixed curve of $X.$

 We take the curve blow up as an example.
 Suppose the curve $C$ is a fixed curve of the $\C^*$-action on $Y.$ Take holomorphic local coordinate $(y_1,y_2,y_3)$ of $Y$ such that $C$ is locally given by $y_1=y_2=0.$ Let $U$ be a $C^*$-invariant tubular neighborhood of $C$ in $Y$
Then there is a neighborhood $V$ of $C$ in $X$  which is equivariant isomorphic to $V=\{(y_1,y_2,y_3;[z_1,z_2])\in U\times P^1|z_1y_2=z_2y_1\}.$
 Since $E$ is not point-wise fixed,   the $\C^*$ action is given by
  $$\C^*\times (U\times P^1)\rw U\times P^1,t\cdot (y,[z_1,z_2])=(t\cdot y,[t^{\pm 1}z_1,z_2]).$$
  Suppose $J$ is the moment map for $S^1$ action on $Y,$ then near $C$ the moment map of  $S^1$ action on $V$ is given
  by $$\tilde{J}(y,[z_1,z_2])=J(y)\pm \frac{ |z_1|^2}{|z_1|^2+|z_2|^2}.$$
 Put $J(C)=a.$  Then on the exceptional divisor $E=Proj(N_C),$  the moment map $\tilde{J}$ take values $a\pm 1$ at the curve $\{z_2=0\}\cap E.$
  If $a$ is minimum (resp. maximum) of $J$ then $\tilde{J}$ take values $a +1$(resp. $a-1$) at the curve $\{z_2=0\}\cap E;$ if $C$ is an interior fixed curve of $Y$
  then $\{z_1=0\}\cap E$ or $\{z_2=0\}\cap E$ is  an interior fixed curve of $X.$
  Hence there is always a fixed curve of $E$ which is
    an interior fixed curve of $X.$

 If the curve $C$ is invariant but not point-fixed and  $E$ is not point-wise fixed, we could prove in the same way that there is at least one interior isolated fixed point of $X$ lying inside $E.$

}\end{remark}

\begin{proposition}\label{blowup} Suppose that algebraic manifold $X$ admits an algebraic $\C^*$-action, and has a blowing
down map $f:X\rightarrow Y$ along an exceptional divisor $E,$ then
$E$ is $\C^*$-invariant. Therefore, if the blowing up center
$f(E)$ is a curve of genus $g\geq 1,$ then there is a descent action
of $\C^*$ on $Y$ such that $f(E)$ is fixed.
\end{proposition}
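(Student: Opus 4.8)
The plan is to show first that the exceptional divisor $E$ is forced to be $\C^*$-invariant, then that the action descends to $Y$ with $f$ equivariant, and finally that a curve of genus $\geq 1$ admits no nontrivial $\C^*$-action, so that the only possibility left for the centre $f(E)$ is to be pointwise fixed.

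\textbf{Step 1: $E$ is $\C^*$-invariant.} Since $\C^*$ is connected, the induced action on the discrete groups $\mathrm{NS}(X)$ and $N_1(X)$ (divisors, resp.\ $1$-cycles, modulo numerical equivalence) is trivial; hence $t^*D\equiv D$ and $t_*C\equiv C$ for every $t\in\C^*$, divisor $D$ and curve $C$. Fix an ample divisor $A$ on $Y$ and set $L=f^*A$. An irreducible curve $C\subset X$ is contracted by $f$ if and only if $L\cdot C=0$; every such curve lies in $E=\mathrm{Exc}(f)$, and conversely $E$ equals the union of its contracted curves, namely the fibres of $f|_E\colon E\to Z$ with $Z:=f(E)$. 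Now for $t\in\C^*$ and a contracted curve $C$ one has $L\cdot(t\cdot C)=(t^*L)\cdot C\equiv L\cdot C=0$, so $t\cdot C$ is again contracted, hence again contained in $E$. Expressing $E$ as the union of its contracted curves then gives $t\cdot E\subseteq E$, and applying the same to $t^{-1}$ yields $t\cdot E=E$.

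\textbf{Step 2: descent, and the genus hypothesis.} Each fibre of $f$ is either a single point of $X\setminus E$ or a connected union of contracted curves inside $E$. As $t$ preserves $E$ and carries contracted curves to contracted curves, $f\bigl(t\cdot f^{-1}(z)\bigr)$ is connected and finite, hence a point; comparing with the action of $t^{-1}$ shows that $t$ permutes the fibres of $f$ bijectively. Consequently the $\C^*$-action descends to an algebraic action on $Y$ for which $f$ is equivariant, and $Z=f(E)$ is $\C^*$-invariant. Finally, if $\C^*$ acted nontrivially on the irreducible curve $Z$, some point would have finite stabiliser, its orbit would therefore be isomorphic to $\C^*$, and the closure of that orbit would be all of $Z$; then $Z$ would be a compactification of $\C^*$, hence rational, contradicting $g(Z)\geq 1$. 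Therefore the descended action fixes $f(E)$ pointwise.

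\textbf{Main obstacle.} The only point that needs care is the descent in Step 2: turning ``$\C^*$ permutes the fibres of $f$'' into a bona fide algebraic $\C^*$-action on $Y$ with $f$ equivariant. The cleanest route is to observe that $f$ is the contraction $\mathrm{cont}_R$ of a ($K_X$-negative) extremal ray $R\subset\overline{NE}(X)$ spanned by a contracted curve; the trivial action on $N_1(X)$ fixes $R$, and the universal property characterising $\mathrm{cont}_R$ forces a compatible action on $Y$, with inverses supplied by $t^{-1}$. One should also record the harmless facts that $E$ may be taken irreducible (otherwise argue component by component, again using that $\C^*$ is connected) and that $f|_E\colon E\to Z$ is the blow-up projection, which is automatic since $f$ is the inverse of a blow-up along the smooth centre $Z$.
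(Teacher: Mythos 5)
Your proof is correct. The paper's own argument rests on the same two pillars you use — connectedness of $\C^*$ forces the induced action on numerical equivalence classes of divisors and curves to be trivial, and an irreducible curve of genus $g\geq 1$ admits no nontrivial $\C^*$-action — but the intersection-theoretic step is organized differently. The paper argues by contradiction: if $E':=t\cdot E\not=E$, pick a contracted rational curve $C\subset E$; then $E\cdot C=\deg({\mathscr O}_E(-1)|_C)=-1$, whereas $E'\cdot C\geq 0$ because $C\not\subset E'$, contradicting $E\equiv E'$. You instead pull back an ample class $A$ from $Y$ and characterize the contracted curves as those with $f^*A\cdot C=0$, a condition visibly preserved by a numerically trivial action; this avoids computing $E\cdot(\mathrm{fibre})=-1$ at the cost of invoking that $E$ is the union of its contracted curves. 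Both routes are sound. Your Step 2 is a genuine addition rather than a variant: the paper passes directly to ``the induced $\C^*$-action on $f(E)$'' without justifying that the action descends to $Y$ as an algebraic action, while you record that $t$ permutes the fibres of $f$ and then appeal to the universal property of the contraction (equivalently, one may use $f_*{\mathscr O}_X={\mathscr O}_Y$ together with the rigidity lemma) to make the descent honest. The final genus argument coincides with the paper's implicit one.
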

\begin{proof}
Otherwise there exists $t\in \C^*$ such that $E':=t\cdot E\not=E.$
Take a rational curve $C$ in $E,$ then  the intersection number
$E\cdot C=\deg ({\mathscr O}_E(-1)|_C)=-1.$ But clearly $E\cdot C
=E'\cdot C=0,$ a contradiction. If $f(E)$ is a curve with genus
$g\geq 1,$   then the induced $\C^*$-action on $f(E)$ is trivial.
\end{proof}

\begin{theorem} Suppose that $X^{\C^*}$ has exactly two component $F_+$ and $F_-.$
\begin{enumerate}
\item If  $F_+=F_-=P^1$ , then  $X$
is  $P^3;$

\item If $F_+$ is a surface and $F_-=P^1,$ then $X=P^1\times P^2.$
\end{enumerate}

\end{theorem}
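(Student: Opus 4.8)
The plan is to run, in each of the two cases, the three-step machine that the paper has just assembled. Because $X^{\C^*}$ has only the two components $F_+$ and $F_-$, in the notation of Proposition \ref{str} we have $a=b=c=0$, so all the formulas below lose their ``interior'' contributions. \emph{Step one}: read off $b_2(X)$, $b_3(X)$ and the type of $F_+$ from Proposition \ref{hodgenumber}, using that $H^*(X,\Z)$ is torsion free (Proposition \ref{torfree}), so that $\rho(X)=b_2(X)$. \emph{Step two}: apply the holomorphic Lefschetz fixed point formula (Theorem \ref{lef}) to $E=\mathscr O_X$; since $X$ is Fano its Lefschetz number is $1$, so equating $1=L(F_-,\mathscr O_X)+L(F_+,\mathscr O_X)$ using the relevant rows of Table 1 (Proposition \ref{leflocal}) and comparing powers of $t$ determines the Chern numbers of the normal bundles $N_{F_\pm}$. \emph{Step three}: apply the Atiyah-Bott-Berline-Vergne localization theorem (Theorem \ref{ablocal}) to $(c^{S^1}_1(X))^3$; its left side is $\int_X c_1(X)^3=(-K_X)^3$ and its right side is the sum of the relevant rows of Table 4 (Proposition \ref{rlocal}), which produces the anticanonical degree. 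One then closes with the Iskovskih-Mori-Mukai classification of Fano $3$-folds.

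For part (A), $F_+=F_-=P^1$, and the maximum component $F_+$ has $n_-=2$; thus Step one gives $h^{1,1}(X)=h^{-1,-1}(F_+)+h^{1,1}(F_-)=1$ and $h^{1,2}(X)=0$, hence $\rho(X)=1$ and $b_3(X)=0$. In Step two the ``line'' rows of Table 1 should force $\int_{F_\pm}c_1(N_{F_\pm})=2$, and then in Step three the ``line'' rows of Table 4 give $(-K_X)^3=32+32=64$. Finally, a Fano $3$-fold of Picard number one has $-K_X=rH$ with $H$ the positive generator of $\mathrm{Pic}(X)$, $1\le r\le 4$, and $(-K_X)^3=r^3H^3$; since $r\le 3$ already forces $(-K_X)^3\le 54$, we must have $r=4$ and $H^3=1$, i.e.\ $X=P^3$.

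For part (B), $F_+$ is a surface, hence a del Pezzo surface and the maximum of $J$ by Proposition \ref{maxmin}, and now $n_-=1$ while $F_-=P^1$. Step one gives $h^{1,1}(X)=h^{0,0}(F_+)+h^{1,1}(F_-)=2$ and $h^{1,2}(X)=0$, and the second expression for $h^{1,1}$ in Proposition \ref{hodgenumber} gives $h^{1,1}(F_+)=2$; thus $\rho(X)=2$, $b_3(X)=0$, and $F_+$ is one of the two del Pezzo surfaces of Picard number two, $P^1\times P^1$ or $\widetilde{P^2}$, both with $c_1^2(F_+)=8$. In Step two the ``line'' row (for $F_-$) and the ``surface'' row (for $F_+$) of Table 1 should yield the relations $c_1(N_{F_-})+c_1(F_+)c_1(N_{F_+})=2$ and $c_1^2(N_{F_+})=c_1(F_+)c_1(N_{F_+})-2$; feeding these, together with $c_1^2(F_+)=8$, into the corresponding rows of Table 4 should make Step three collapse to $(-K_X)^3=54$, independently of the one remaining free Chern number. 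The Mori-Mukai list \cite{mm1} contains exactly one Fano $3$-fold with $\rho=2$ and $(-K)^3=54$, namely $P^1\times P^2$.

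The step I expect to be the real work is Step two, and its interaction with Step three, in case (B): one must fix the equivariant conventions so as to know that it is $F_+$ (not $F_-$) that carries the negative-weight normal direction, track the signs correctly through Tables 1 and 4, and check that the two Lefschetz relations together with the del Pezzo identity $c_1^2(F_+)=8$ are \emph{exactly} enough to make the localization value $(-K_X)^3$ a number rather than a function of an undetermined Chern number. By contrast, once $\rho(X)$ and $(-K_X)^3$ are in hand the identification with $P^3$, respectively $P^1\times P^2$, is immediate, since in the ranges we land in these are the unique Fano $3$-folds of the given Picard number and anticanonical degree.
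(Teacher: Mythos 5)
Your part (A) is correct and is essentially the paper's own argument: the Lefschetz identity forces $c_1(N_{F_+})=c_1(N_{F_-})=2$, localization then gives $-K_X^3=32+32=64$, and a Fano $3$-fold with $\rho=1$ and $-K_X^3=64$ is $P^3$ (your detour through the index is a harmless substitute for citing the table in \cite{ip}). The numerical part of (B) is also right: your two relations are equivalent to the paper's pair $c_1(F_+)c_1(N_{F_+})-c_1^2(N_{F_+})=2$ and $2c_1(N_{F_-})+c_1(F_+)c_1(N_{F_+})+c_1^2(N_{F_+})=2$, and with $c_1(F_-)=2$, $c_1^2(F_+)=8$ the localization sum does collapse to $-K_X^3=54$ independently of the one undetermined Chern number, exactly as you anticipated.

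The genuine gap is the last sentence of (B). It is not true that $P^1\times P^2$ is the unique Fano $3$-fold with $\rho=2$ and $-K_X^3=54$: the blow-up of $P^3$ along a line has the same invariants, since $-K_X^3=-K_{P^3}^3+2K_{P^3}\cdot\ell-2=64-8-2=54$ (equivalently, it is the $P^2$-bundle $Proj({\mathscr O}\oplus{\mathscr O}\oplus{\mathscr O}(1))$ over $P^1$, and $(3\xi+F)^3=27+27=54$). So the invariants you computed do not single out $P^1\times P^2$, and no further argument is supplied to discard the other candidate. Worse, that candidate cannot be discarded: by Example \ref{exp11}, blowing up $P^3$ along a fixed line of the standard semifree action produces a semifree $\C^*$-action whose fixed locus is exactly one line and one ruled surface, so the blow-up of $P^3$ along a line genuinely satisfies the hypotheses of case (B). The paper's own proof of this case stops at the two candidates and records both in Table 5; the correct conclusion of (B) is ``$X$ is $P^1\times P^2$ or the blow-up of $P^3$ along a line,'' and any argument claiming uniqueness must fail somewhere --- in yours, at the appeal to the Mori--Mukai list.
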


\begin{proof} (A). From (B) of Proposition \ref{hodgenumber} we know $h^{1,1}=1$ and $h^{1,2}=0.$
Using  the Lefschetz fixed point formula in Theorem \ref{lef} and the data in Table 1, we have
$$\begin{array}{rcl}1=\frac{(1-t)t^2+t^2c_1(N_{F_-})}{(1-t)^3}+\frac{(1-t)-c_1(N_{F_+})t}{(1-t)^3},\end{array}$$
hence we have $c_1(N_{F_+})=c_1(N_{F_-})=2.$ Using localization Theorem \ref{ablocal} and data computed in Table 4, we have
$$\begin{array}{rcl}-K^3_X=\int_X (c^{S^1}_1(X))^3={12c_1\! (F_+\!)\!+\!4c_1\!(N_{F_+}\!)}+{12c_1\! (F_-\!)\!+\!4c_1\!(N_{F_-}\!)}=64.\end{array}$$
By the classification for Fano threefolds with Picard number $\rho=h^{1,1}=1$ and $K^3_X=64$ in \cite[Appendix, Table \S 12.2]{ip}, $P^3$ is the only possibility.

  (B).
Using  the Lefschetz fixed point formula in Theorem \ref{lef} and the data in Table 1, we have
$$\begin{array}{rcl}1=\frac{(1-t)t^2+t^2c_1(N_{F_-})}{(1-t)^3}+\frac{2\!(1-t\!)^2\!-\!t(1-t\!)c_1\!(F_+\!)c_1\!(N_{F_+}\!)\!+\!t(1+t\!)
c_1^2\!(N_{F_+}\!)}{2(1-t)^3},\end{array}$$
we get
\begin{eqnarray}\left\{\begin{array}{rcl}
&&c_1(F_+)c_1\!(N_{F_+}\!)-c^2_1(N_{F_+})=2,\\
&&2c_1(N_{F_-})+c_1(F_+)c_1\!(N_{F_+}\!)+c^2_1(N_{F_+})=2.\\
\end{array}\right.\label{nff1}\end{eqnarray}
Hence we have $c_1(F_+)c_1\!(N_{F_+}\!)=c^2_1(N_{F_+})+2$ and $c_1\!(N_{F_-}\!)=-c^2_1(N_{F_+}).$
Integrate  $(c^{S^1}_1(X))^3$ on $X$ via using localization Theorem \ref{ablocal} and data computed in Table 3, we get
$$-K^3_X=12c_1(F_-)+4c_1 (N_{F_-}) +3c^2_1({F_{+}})+c^2_1(N_{F_{+}})+3c_1({F_{+}})c_1(N_{F_{+}}),$$
note $F_-=P^1$ and $F_+$ is a ruled surface hence $c_1(F_-)=2$ and $c^2_1(F_+)=8,$ thus
$$-K^3_X=48+4c_1 (N_{F_-})+c^2_1(N_{F_{+}})+3c_1({F_{+}})c_1(N_{F_{+}})=54.$$
By the classification for Fano threefolds with Picard number $\rho=h^{1,1}=2$ and $K^3_X=54$ in \cite[Appendix, Table \S 12.3]{ip},
 there are two such Fano $3$-folds, $P^1\times P^2$ and the blow-up of $P^3$ along a line. Note we have realized such actions as in Example
 \ref{exp11} and \ref{exp3}.
\end{proof}

\begin{table}
\begin{center}
\begin{tabular}{|c ||c |c|c | c|}
\hline $X^{\C^*}$& $h^{1,1}$ & $h^{1,2}$ & X
\\ \hline\hline ${\rm two~ points}$ & 0& 0& {\rm not~ exist} \\ \hline
${\rm a~~ point~~and~~ a ~~line}$& 1 & 0& {\rm not~ exist} \\ \hline
${\rm a~~ point~~and~~ a ~~Fano ~~ surface}$& 1 & 0& $P^3$ \\ \hline
${\rm two ~~lines}$& 1&0&$P^3$
 \\ \hline
${\rm a~~ line~~ a ~~Fano ~~ surface}$ &2&0&
 $P^1\times P^2,{\rm blow~~up~~of} P^3 {\rm along~~a~~line}$ \\
 \hline
${\rm two~~~Fano~~~surfaces}$&1+d &0& {\rm $P_{S_d} (E) $} \\
\hline
\end{tabular}
\end{center}
{(\tiny {\bf Remark:} $E$ is rank 2 vector bundle on the Fano surface $S_d$ of Picard number $d$)}
\caption{Case a=b=c=0}
\end{table}

\begin{remark}{\rm By the classifications of Fano threefolds \cite[Table \S 12.2]{ip}, there are only four of them with Hodge number
$h^{1,1}=1$ amd $h^{1,2}=0,$ they are $P^3,$ quadric
$\widetilde{Gr}_2(\R^5),$  Mukai-Umemura Fano manifolds $X_5$ and $X_{22}.$
There are many examples \cite[Table \S 12.3]{ip} of Fano
$3$-folds $X$ with $h^{1,1}(X)=2,h^{1,2}(X)=0,$ most of them are obtained
from the blowing up of those Fano $3$-folds $Y$ with
$h^{1,1}(Y)=1,h^{1,2}(Y)=0$ along  curves.   By Proposition \ref{blowup} and \ref{blowu}
 they are excluded. The rest cases \cite[Table \S 12.3]{ip}: a divisor on $P^2\times P^2$ of bidegree  $(1,2), $ the projective tangential bundle $Proj(T_{P^2})$ of $P^2,$ or $Proj({\mathscr
O}_{P^2}\oplus {\mathscr O}_{P^2}(k))$ as  a $P^1$ bundle over $P^2$ with
Euler number $k=0,1,2.$ }
\end{remark}

\section{Classification where $X^{\C^*}$ has no interior isolated fixed points but at least one interior fixed curve}\label{sect7}
 Let $X$ be a Fano
$3$-fold with an algebraic $\C^*$-action such that the circle
subgroup $S^1$-action is Hamiltonian. We denote the level set
$J^{-1}(r)$ by $P_r.$ If $r$ is a regular value of $J$ then the Marsden-Weinstein reduced space$X_r:=P^r/S^1$ is a smooth surface and $P_r\rw X_r$ is a principle circle bundle.
 In \cite{fut} Futaki proved that
$X_r$ is a Fano surface when $r$ is a regular value.
 We denote the
Euler class of the circle bundle  by $e(P_r)\in
H^2(X_r,\Z).$

 Let $c$ be a critical value of $J,$ and suppose  $C_1,\cdots,C_k
\subset P_{c}$ all algebraic curve locating inside $P_c$ that fixed by the $\C^*$-action and there is no other fixed points.
 Assume that $\epsilon$ is small enough such that
$c$ is the only critical value in $[c-\epsilon,c+\epsilon].$ By
\cite{gs89}, $X_{c-\epsilon}$ is diffeomorphic to $X_{c+\epsilon}.$ Therefore if the fixed point set $X^{\C^*}$ has no interior isolated fixed points,
then the  Marsden-Weinstein reduced spaces have the same diffeomorphism type,
let's denote it by $X_{red}.$ Moreover $C_1,\cdots,C_k$ are embedding
K\"ahler submanifold of $X_{red}.$ We denote the dual class of them
by $[C_1],\cdots,[C_k],$ then the Euler classes of the circle
bundles vary by
\begin{eqnarray}\label{euler}
e(P_{c+\epsilon})=e(P_{c-\epsilon})+[C_1]+\cdots+[C_k].\end{eqnarray}
Let $N_{C_i |X}$ (resp. $N_{C_i |X_{red}}$) denote the normal bundle
of $C_i$ in $X$ (resp. $X_{red}$). Then $N_{C_i |X}$ has a
decomposition $N_{C_i |X}=L^+_i \oplus L^-_i,$ and on $L^+_i$ (resp.
$L^-_i$) the $\C^*$-action by positive (resp. negative) weight.
$N_{C_i |X_{red}}$ is isomorphic to $L^+_i \otimes L^-_i.$ Let
$\alpha_i^{\pm}=c_1(L^{\pm}_i)$ denote the first Chern numbers of
$L^{\pm}_i.$ Then
\begin{eqnarray}\label{norm}
c_1 (N_{C_i |X_{red}})=\alpha^+_i +\alpha^-_i
=\int_{X_{red}}[C_i]^2,~~~i=1,\cdots,k.
\end{eqnarray}
Since we assume $X^{\C^*}$ has no interior isolated fixed points in
this section, i.e., all other connected components in $X^{\C^*}$ are
algebraic curves except the maximum and minimum components.  Then
the diffeomorphism type of Marsden-Weinstein reduced spaces $X_{r}:=X_{red}$ does not depend on $r$ \cite{gs89}
and is a Fano surface. In the following we will detect the
structure of $X$ by analyzing the variation of the Euler class of
circle bundle $P_r\rightarrow X_{red}$ as $r$ varies from the
minimum to the maximum. We denote $e(P_r)$ by $e_-$ if $r$ is near
the minimum and by $e_+$ if $r$ is near the maximum.

\begin{theorem}\label{noiso} Suppose that there is a semifree $\C^*$-action  on a Fano $3$-fold $X$ such that there is no  interior isolated fixed points,
and at least one interior fixed curve (i.e., $a\geq 1$),
then the possible $X$ are  listed Table 6.
\end{theorem}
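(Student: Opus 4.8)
The plan is to run the variation-of-Euler-class analysis set up in the paragraphs immediately preceding the statement, combined with the numerical constraints from the holomorphic Lefschetz fixed point formula and the Atiyah--Bott--Berline--Vergne localization formula computed in Section \ref{sect5}. Since $a\geq 1$, by Proposition \ref{hodgenumber}(B) we have $h^{1,2}=\sum_{k=1}^a g_k$, and for $X$ to be Fano (hence rationally connected) we must have $h^{1,2}=0$, so every interior fixed curve $C_k$ is rational, i.e. $g_k=0$; combined with Proposition \ref{hodgenumber}(A) this shows all of $F_\pm$ (when curves) and all $C_k$ are copies of $P^1$. The Marsden--Weinstein reduced space $X_{red}$ is a fixed del Pezzo surface, and as $r$ crosses the critical value carrying the interior curves $C_1,\dots,C_k$, the Euler class jumps by $\sum_j [C_j]$ as in (\ref{euler}), with the self-intersection of $[C_j]$ controlled by (\ref{norm}).

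The first step is to enumerate the possible ``profiles'': the list of connected components of $X^{\C^*}$ ordered by the value of $J$, namely $F_-$ at the bottom, then the interior fixed curves grouped by critical level, then $F_+$ at the top, where $F_\pm$ is a point, a $P^1$, or a del Pezzo surface. For each profile one reads off $h^{1,1}$ from Proposition \ref{hodgenumber}(B): $h^{1,1}=h^{1,1}(F_-)+h^{1-n_-,1-n_-}(F_+)+a$ (with $b=c=0$), which pins down the Picard number $\rho(X)=h^{1,1}$. Next one computes $-K_X^3=\int_X (c_1^{S^1}(X))^3$ by summing the contributions in Proposition \ref{rinterior} (which gives $0$ on each interior curve) and Proposition \ref{rlocal}/Tables 2--4 on $F_\pm$; together with the Lefschetz identity $1=L(F_-,\mathscr O_X)+L(F_+,\mathscr O_X)+\sum_k L(C_k,\mathscr O_X)$ from Propositions \ref{lefinterior}--\ref{leflocal} one extracts relations among $c_1(N_{F_\pm})$, $c_1(F_\pm)$, $\chi(C_k)=2$, and $\alpha_k^\pm$. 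Because $X$ is Fano these Chern numbers live in a bounded range (the normal bundles of the $C_k$ and of $F_\pm$ must be compatible with the del Pezzo structure of $X_{red}$ and with $-K_X$ being ample), so only finitely many numerical solutions survive. For each surviving $(\rho(X), -K_X^3)$ pair one then consults the Iskovskih--Mori--Mukai classification as tabulated in \cite{ip} (Tables \S 12.2--12.6) to produce the finite candidate list, and finally one uses Propositions \ref{blowu}, \ref{blowup} and Remark \ref{blowmo} to discard those candidates whose only $\C^*$-actions (typically forced to come from equivariant blow-ups along higher-genus curves, or producing interior isolated fixed points) are incompatible with being semifree and interior-isolated-point-free. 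The output of this bookkeeping is exactly Table 6.

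The main obstacle is the middle of this argument: controlling the combinatorics when there are \emph{several} critical levels each carrying one or more interior fixed curves, i.e. when $a\geq 2$ and the $C_k$ are distributed over distinct levels. In that case the Euler class of $P_r$ changes in several steps, and at each step the reduced surface $X_{red}$ (fixed as a smooth manifold, but with a varying complex/symplectic structure) must remain del Pezzo, so the successive classes $[C_j]$ must be effective curve classes of bounded self-intersection stacking up to a consistent total; ensuring $-K_X$ stays ample throughout forces tight inequalities on $\alpha_k^+,\alpha_k^-$ and on how $e_-$ evolves to $e_+$. I expect this to require a careful case split on $\rho(X)\in\{2,3,4,\dots\}$, using that a del Pezzo surface of Picard number $\rho$ has $\rho\leq 9$ and a known, short list of $(-1)$- and $(-2)$-curve configurations, so the process terminates. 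The remaining steps (the Lefschetz/localization computations and the lookup in \cite{ip}) are mechanical given Tables 1--4 and need only careful arithmetic rather than new ideas.
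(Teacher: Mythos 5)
Your overall strategy coincides with the paper's: case analysis on the topological type of $F_\pm$ (point, rational curve, del Pezzo surface), identification of the reduced space $X_{red}$, the Euler-class bookkeeping (\ref{euler})--(\ref{norm}) combined with the adjunction formula on $X_{red}$, extraction of $c_1(N_{F_\pm})$ and $\alpha_k^\pm$ from the Lefschetz identity, computation of $-K_X^3$ by localization, lookup in the Iskovskih--Prokhorov tables, and exclusion of spurious candidates via Propositions \ref{blowu}, \ref{blowup} and tangent-bundle exact sequences. That part is sound.

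However, there is a genuine error at the start of your argument: you assert that since $X$ is Fano, hence rationally connected, we must have $h^{1,2}=0$, and therefore every interior fixed curve is rational. Rational connectedness only kills $h^{p,0}$ (equivalently $h^{0,p}$); it imposes nothing on $h^{1,2}$, and Fano threefolds with $h^{1,2}>0$ abound. In fact the theorem you are proving contains such a case: the divisor of multidegree $(1,1,1,1)$ in $(P^1)^4$ appears in Table~6 with $h^{1,2}=1$ and $a=1$, and by Proposition \ref{hodgenumber}(B) its single interior fixed curve satisfies $g_1=h^{1,2}=1$, i.e.\ it is an elliptic curve (this arises in the paper's Case~7, where $X_{red}=P^1\times P^1$ and $[C_1]=2u+2v$ gives $g_1=(x_1-1)(y_1-1)=1$ by adjunction). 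Your premise would prune this branch and produce an incomplete Table~6. The correct handling, as in the paper, is to keep $g_k=\sum$ over the genera as an unknown constrained by the adjunction formula on $X_{red}$, by the classification's allowed values of $h^{1,2}$ for each Picard number, and by the Lefschetz relations through $\chi(C_k)=1-g_k$; the genera are then determined case by case rather than assumed to vanish. The remainder of your outline would go through once this is repaired, but as written the proof is not correct.
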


\begin{table}
\begin{center}
\begin{tabular}{|c ||c |c|c | c|}
\hline $-K^3_X$& $h^{1,1}$ & $h^{1,2}$&a & X
\\ \hline\hline $48$ & 2& 0& 1&$Proj(T_{P^2})$ \\ \cline{2-5}
& 3 & 0& 2&$(P^1)^3$ or $P^1\times\widetilde{P^2}$ \\ \hline
$46$& 2 & 0& 1& blow up of $P^3$ along a line  \\ \cline{2-5}
& 3 & 0& 1&blow up of $P^3$ along a line and a point \\ \hline
$40$& 3 & 0& 1& blow up of $P^1\times P^2$ along a conic on $t\times P^2$  \\ \hline
$24$& 4 & 1& 1& a divisor of degree $(1,1,1,1)$ in $(P^1)^4$ \\ \hline
$36$& 4 & 0& 1&  blow up of $Proj(T_{P^2})$ along two curves  \\ \cline{2-5}
& 5 & 0& 2&   the blow up of $Y$ along two exceptional curve $l,l',$    \\
&  & & & where $Y$ is the blow-up of $P^3$  along two lines   \\ \hline
$38$& 4 & 0& 1&  blow up of $(P^1)^3$ along a tridegree $(0,1,1)$ curve  \\ \hline
$32$& 4 & 0& 1&  blow up of $P^1\times P^2$ at  a $(2,1)$ and a  $(1,0)$ curve  \\ \hline
\end{tabular}
\end{center}
\caption{Case $a\geq 1,b=c=0$}
\end{table}

\begin{proof} By Proposition \ref{maxmin} and Proposition \ref{hodgenumber}, if $F_-$ and $F_+$ are not isolated fixed points then they are Fano manifolds. Moreover the reduced space $X_{red}$ is a del Pezzo surface and will not change its diffeomorphic type.
The del Pezzo surfaces are listed as: $P^2,~P^1\times P^1,$ the blow-up $S_d$ of $P^2$ at $9-d$ points in general positions for $1\leq d\leq 8.$
We will give the proof  according to the types of the Marsden-Weinstein reduced spaces.
Note if the Picard number of $X$ is bigger than $6,$ then $X$ is of form $P^1\times S_d,$  actually we only needed to consider the following $8$ cases:

 {\it Case 1}. Both $F_+$  and $F_-$ are isolated fixed points.  Integrate $(c_1^{S^1}\!(T_X\!))^3\in H^2_{S^1}(X,\Z),$ via using Theorem \ref{ablocal} and the data  in Table 4 we get
$$-K_X^3=54.$$ Note in thise case $X_{red}=P^2.$ Let $u=c_1({\mathscr
O}_{P^2}(1)),$ then $c_1(X_{red})=3u.$ If $r$ is near the minimum (resp.
maximum) then $P_r\rightarrow X_r$ is a Hopf fibration
$S^1\hookrightarrow S^5\rightarrow P^2$ whose Euler class is $e_-
=-u$ (resp. $e_+ =u$)  by
Proposition \ref{localization}. Suppose $C_1,\cdots,C_a$ are all interior fixed
algebraic curves, and suppose $[C_k]=x_k u,$ note that $x_k>0$ since
$C_k$ is an embedded K\"ahler submanifold. Hence
$$-u+x_1 u+x_2 u+\cdots+x_a u =u.$$
 Hence we have $a=1$ or $a=2.$
On the other hand,
 by the adjunction formula
$$\int_{P^2}(x_k u)^2-3\int_{P^2}x_ku^2=2g_k-2,$$
i.e.,
\begin{eqnarray}x_k^2-3x_k+2=2g_k,\quad k=1,2,\cdots,a.\label{xeq}\end{eqnarray}
 By Atiyah and Bott's Lefschetz fixed point formula in Theorem \ref{lef}, via
using the local data computed in Proposition \ref{lefinterior} and
\ref{leflocal} we get
$$\begin{array}{rcl}1=\frac{-t^3}{(1-t)^3}+\frac{1}{(1-t)^3}+\sum_{k=1}^a\frac{-t(1-t)\chi (C_k)-(\alpha^+_k -\alpha^-_k t)t}{(1-t)^3},\end{array}$$
Hence
\begin{eqnarray}\sum_{k=1}^a(\alpha^{+}_k +\chi (C_k))=\sum_{k=1}^a(\alpha^{-}_k +\chi (C_k))=3.\label{reqr}\end{eqnarray}
 We
have following cases:

 {\it Subcase 1}. $a=1.$ By Proposition \ref{hodgenumber} the
Hodge numbers of $X$ are given by $h^{1,1}=1, h^{1,2}=0.$ By (\ref{xeq}) we have
$x_1=2, g_1=0,$ there is only one fixed rational curve $C_1.$ Hence
$\alpha^+_1=\alpha^-_1=2.$
By the classifications of Fano threefolds \cite[Table \S 12.2]{ip}, the only Fano threefold with $h^{1,1}=1,h^{1,2}=0$ and $-K^3_X=54$ is a smooth quadric
$Q\subset P^4.$ Hence we have the following two exact sequences:
 $$0\rw T_X\rw T_{P^4}\rw {\mathscr O}_{P^4}(2)\rw 0,\hskip1cm 0\rw T_{C_1}\rw T_{X}|_{C_1}\rw N_{C_1}X\rw 0$$
Since $ T_{C_1}={\mathscr O}_{P^1}(2)$ and $c_1 (N_{C_1}X)= \alpha^-_1+\alpha^+_1=4,$ by the second exact sequence we have $c_1( T_{X}|_{C_1})=6.$ However, by the Euler exact sequence $0\rw {\mathscr O}_{P^4}\rw {\mathscr O}_{P^4}(1)^{\oplus 5}\rw T_{P^4}\rw 0$ we have $c_1(T_{P^4}|_{C_1})=5$ and hence by the first exact sequence we have $c_1(T_X|_{C_1})=3,$ it is a contradiction, hence this case is impossible.

{\it Subcase 2}. $a=2.$ By Proposition \ref{hodgenumber} the Hodge numbers of $X$ are given
by $h^{1,1}=2, h^{1,2}=0.$  By (\ref{xeq}) we have $x_1=x_2=1$ and $g_1=g_2=0,$ there are exactly two fixed
rational curves $C_1$ and $C_2.$ Note that $\chi(C_1)=\chi(C_2)=1$ and by (\ref{reqr}) $\alpha^{\pm}_1
+\alpha^{\pm}_2=2.$
By the classifications of Fano threefolds \cite[Table \S 12.3]{ip}, the  Fano threefolds with $h^{1,1}=2,h^{1,2}=0$ and $-K^3_X=54$ are
$P^1\times P^2$ and the blow-up of $P^3$  along a line.

The blow-up of $P^3$  along a line is clearly impossible. Firstly, the blowing up center line
is clearly invariant but not point-wise fixed, otherwise there is an induced semifree action on $P^3$ with two isolated fixed points and an interior fixed curve, it is a contradiction to Subcase 1.
Since the induced action of $\C^*$ on the blowing up center line has
exactly two fixed point, after blowing down the exceptional divisor, it will means that there is a semifree $C^*$ action on $P^3$ whose fixed point set is two points, three points or four points, they are all impossible by Corollary \ref{p3}.

The case $X=P^1\times P^2$ is neither possible, otherwise we have exact sequences
$$0\rw T_{C_i}\rw T_{P^1\times P^2}|_{C_i}\rw N_{C_i}\rw 0,\quad\quad i=1,2.$$
Since  $\alpha^{\pm}_1
+\alpha^{\pm}_2=2$ we have $c_1(N_{C_1})+c_1(N_{C_2})=4.$ Therefore $c_1 (T_{P^1\times P^2}|_{C_1})+c_1 (T_{P^1\times P^2}|_{C_2})=8.$
Write $T_{P^1\times P^2}|_{C_i}={\mathscr O}_{P^1}(2)\oplus {\mathscr O}_{P^1}(a_i)\oplus {\mathscr O}_{P^1}(b_{i}),$
by the Euler sequences for $P^2$  we have  $a_i+b_i=c_1(T_{P^2})=3.$
 It is clearly impossible. Hence this subcase is impossible.

To sum up, the case 1 is impossible.

{\it Case 2}. Both $F_+$  and $F_-$ are rational curves.  Then the Marsden-Weinstein reduced space $X_{red}$ is a $P^1$ bundle over
$P^1.$ Since $X_{red}$ is a Fano surface hence  $X_{red}=P^1\times P^1$ or $\widetilde{P^2}\cong P_{P^1}({\mathscr
O}\oplus {\mathscr O}(-1))$, the blowing up of $P^2$ at one point. Both of them are $S^2$ bundle over $S^2.$ Let $u,v$ be the dual class of the
fiber and base respectively.

{\it Subcase 1.} The Marsden-Weinstein reduced space $X_{red}=P^1\times P^1.$ Note
$c_1(X_{red})=2u+2v.$ Let $[C_k]=x_ku+y_kv$ and $e_{\pm
}=x_{\pm}u+y_{\pm}v,$ then $x_k\geq 0,y_k\geq 0$ and $x_k +y_k>0.$
By the adjunction formula
$$\begin{array}{rcl}\int_{P^1\times P^1}[(x_k -2)u+(y_k -2)v](x_k u +y_k v)=2g_k-2,\end{array}$$
i.e.,
\begin{eqnarray}\begin{array}{rcl}(x_k -1)(y_k-1)=g_k,\quad k=1,2,\cdots,a.\end{array}\label{genns}\end{eqnarray}
 Note near the minimum of $J,$ the restriction of circle bundle
$J^{-1}(r)\rightarrow X_{red}$ to the fibre of sphere bundle of
$X_{red}\rightarrow P^1$ is a Hopf fibration $S^1\hookrightarrow
S^3\rightarrow S^2$. We may assume $e_-=-u+y_- v,$ and $e_+=u+y_+v$
or $e_+=x_+ u\pm v.$ On the other hand,  the Euler number of the
normal bundle $N_{F_-|X_{red}}$  is
$$\begin{array}{rcl}c_1 (N_{F_-})=\int_{P^1}u
=\int_{P^1\times P^1}u^2=0,\end{array}$$ and by the same reason we have $c_1
(N_{F_+})=0.$
Integrate $(c_1^{S^1}\!(T_X\!))^3\in H^2_{S^1}(X,\Z),$ via using Theorem \ref{ablocal} and the data  in Table 4 we get
$$-K_X^3=12(c_1(F_-)+c_1(F_+))+4(c_1
(N_{F_-})+c_1
(N_{F_+}))=48.$$
 By   Proposition \ref{localization}, we have
$$\begin{array}{rcl}\int_{P^1\times P^1}e_-^2 =\int_{P^1\times P^1}  e_+^2 =0.\end{array}$$
Hence we have $y_-=x_+ =y_+ =0$ and  thus $e_-=-u$ or $e_+ =u$ or
$\pm v.$

Using the Lefschetz fixed point formula in Theorem \ref{lef} and the
data in Proposition \ref{lefinterior} and \ref{leflocal}, noting that
$c_1(N_{F_+})=c_1(N_{F_-})=0,$ we have
$$\begin{array}{rcl}1=\frac{(1-t)t^2}{(1-t)^3}+\frac{(1-t)}{(1-t)^3}+\sum_{k=1}^a\frac{-t(1-t)\chi (C_k)-(\alpha^+_k
-\alpha^-_k t)t}{(1-t)^3},\end{array}$$ hence
\begin{eqnarray}\begin{array}{rcl}\sum_{k=1}^a(\alpha^+_k+\chi (C_k))=\sum_{k=1}^a(\alpha^-_k+\chi (C_k))=2.\end{array}\label{gesum}\end{eqnarray}

 Since $e_- +\sum_{k=1}^a (x_k u+y_k v)=e_+$ and $x_k\geq 0,y_k\geq 0,g_1\geq 0$ and $x_k +y_k\geq 1.$ We have only the
following two cases:

(i) $a=1.$  If $g_1\geq 1$ then $x_1\geq 2, y_1\geq 2$ which is impossible.
  Hence $g_1=0$ and  $C_1=u+v$ by (\ref{genns}) and $e_- =-u, e_+=v.$ Hence
$h^{1,1}=2, h^{1,2}=0$ and $\alpha^+_1
=\alpha^-_1 =1.$ By the classifications of Fano threefolds \cite[Table \S 12.3]{ip}, the only Fano threefold with $h^{1,1}=2,h^{1,2}=0$ and $-K^3_X=48$ is
$Proj(T_{P^2}).$

 (ii) $a=2.$ Then $C_1=C_2=u$ or $C_i=u,
C_{3-i}=v$ for $i=1,2.$ We have $g_1=g_2=0$ by (\ref{genns}) and $h^{1,1}=3, h^{1,2}=0.$ Suppose now $J(C_1)\not=J(C_2).$
 By Proposition \ref{localization},$\alpha^+_1
=\alpha^-_1$  and $\alpha^+_2 =\alpha^-_2.$
By (\ref{gesum}) we have $\alpha^+_1
+\alpha^+_2=\alpha^-_1 +\alpha^-_2=0.$
  Hence we have $\alpha^+_1
=\alpha^-_1=\alpha^+_2 =\alpha^-_2=0.$
By the classifications of Fano threefolds \cite[Table \S 12.4]{ip}, there are two Fano threefold with $h^{1,1}=3,h^{1,2}=0$ and $-K^3_X=48,$ they
are $P^1\times P^1\times P^1$ and $P^1\times \widetilde{P^2}.$ On $P^1\times P^1\times P^1$ such semifree $\C^*$-action is realized by
$$t\cdot ([x_1,x_2],[y_1,y_2],[z_1,z_2])=([tx_1,x_2],[ty_1,y_2],[z_1,z_2]).$$
On $P^1\times \widetilde{P^2}.$
such semifree $\C^*$-action is defined as the following: Let $\C^*$ act on $P^2$ by
$$t\cdot ([z_1,z_2,z_3])=([tz_1,z_2,z_3]),$$
blowing up the fixed point we get an semifree action on $\widetilde{P^2}$ with two fixed lines. Let $\C^*$ act on $P^1$ by $t\cdot ([x_1,x_2])=([tx_1,x_2).$ Then there is a natural product action on $P^1\times \widetilde{P^2}$ which is semifree and with required properties.

{\it Subcase 2.}  The Marsden-Weinstein reduced space $X_{red}=\widetilde{P^2}\cong P_{P^1}({\mathscr
O}\oplus {\mathscr O}(-1)).$ Let $u,v\in H^2(\widetilde{P^2},\Z)$ be a basis which
are dual classes of the fibres and the zero section $P^1$
respectively. Then
$$\begin{array}{rcl}u^2=0,~~~~v^2=-1, ~~~~~uv=1.\end{array}$$
Let $e_{\pm} =x_{\pm}u+y_{\pm} v.$ Since the restriction of circle
bundle $\varphi^{-1}(a)\rightarrow X_{red}$ on the fibre of
$X_{red}\rightarrow P^1$ is a Hopf fibration,
 we have$$\begin{array}{rcl}
c_1 (N_{F_-
})&=&\int_{fibre}e_-=(x_-u+y_-v)u=-1,\\
 c_1 (N_{F_+})&=&\int_{fibre}e_+=(x_+u+y_+v)u=1.\end{array}$$
 Therefore $y_-=-1$ and $y_+=1.$
 Integrate $(c_1^{S^1}\!(T_X\!))^3\in H^2_{S^1}(X,\Z),$ via using Theorem \ref{ablocal} and the data  in Table 4 we get
$$-K_X^3=12(c_1(F_-)+c_1(F_+))+4(c_1
(N_{F_-})+c_1
(N_{F_+}))=48.$$ By  Proposition \ref{localization},
$\int_{\widetilde{P^2}}e_-^2 =1,~~\int_{\widetilde{P^2}}  e_+^2 =-1.$
Thus $x_-=0,x_+=1$ and
\begin{eqnarray}\sum_{k=1}^a x_k=1,\quad \sum_{k=1}^a y_k =2.\label{sbb0}\end{eqnarray}
Using  the Lefschetz fixed point formula in Theorem \ref{lef} we have
$$\begin{array}{rcl}1=\frac{-t^3}{(1-t)^3}+\frac{1}{(1-t)^3}+\sum_{k=1}^a\frac{-t(1-t)\chi(C_k)-(\alpha^+_k
-\alpha^-_k t)t}{(1-t)^3},\end{array}$$ hence
\begin{eqnarray}\sum_{k=1}^a(\alpha^+_k+\chi (C_k))=\sum_{k=1}^a(\alpha^-_k+\chi (C_k))=3.\label{sbb1}\end{eqnarray}
By formula (\ref{norm}), we have
\begin{eqnarray}2 x_k y_k-y^2_k =\alpha^+_k +  \alpha^-_k,~~~k=1,2,\cdots,a.\label{sbb2}\end{eqnarray}
On the other hand, by the adjunction formula
$$\begin{array}{rcl}\int_{\widetilde{P^2}}[(x_k u +y_k v)-(3u+2v)](x_k u +y_k v)=2g_k-2,\end{array}$$
i.e.,
\begin{eqnarray}(2x_k -1)(y_k-1)+1=y^2_k+2g_k ,~~~k=1,2,\cdots,a.\label{sbb4}\end{eqnarray}
From (\ref{sbb4}) we know if $x_k=0$ then $y_k=1, g_k =0;$ if $y_k=0$ then $x_k =1,
g_k=0.$ By(\ref{sbb0}) in the rest
cases we have $1\leq x_k, y_k\leq 2,$ and then from (\ref{sbb4})  we have $(x_k,y_k,g_k)=(1,1,0),(2,2,0) $ or $(2,1,0).$
 So $g_k =0$ in every possible case. Hence by (\ref{sbb1}) and (\ref{sbb2}) we have \begin{eqnarray}\sum^a_{k=1}(2x_k y_k -y^2_k)=6-2\sum_{k=1}^a\chi(C_k)=6-2a\label{sbbb}\end{eqnarray}
 Take sums of the equations in (\ref{sbb4}), and using  (\ref{sbb0}) and (\ref{sbbb}) we have
    $$2-2a=\sum^a_{k=1}[(2x_k y_k -y^2_k)-y_k -2x_k]=\sum_{k=1}^a(2g_k-2)=-2a,$$
    which is a contradiction.
 Hence this subcase is impossible.

To sum up, in Case 2,  $X$ are $(P^1)^3,P^1\times \widetilde{P^2}$ or $Proj(T_{P^2}).$

{\it Case 3.}  $F_+$ and $F_-$ are $P^2$ and an isolated fixed point. Without loss of generality we may assume that $F_+ =P^2$ and $F_- $ is an isolated fixed point. Note the Marsden-Weinstein reduced space $X_{red}=P^2,$ the Hodge number $h^{1,1}=1+a$ and
$h^{1,2}=\sum g_k$ by Proposition \ref{hodgenumber}. Let $[C_k]=x_k u$ with $x_k\geq 1.$
By  Proposition \ref{localization} we may assume $e_- =\pm u.$ Let $e_+ = x_+ u,$
then $c_1(F_+)c_1(N_{F_+})
=3x_+$, and $c_1^2(N_{F_+})=x^2_+.$
Using the Lefschetz fixed point formula in Theorem \ref{lef}
$$\begin{array}{rcl}&&\frac{-t^3}{(1-t)^3}+\sum_{k=1}^a\frac{-t(1-t)\chi (C_k)-(\alpha^+_k -\alpha^-_k
t)t}{(1-t)^3}\\
&&+\frac{2(1-t)^2-3t(1-t)x_++t(1+t)
x^2_+}{2(1-t)^3}  =1\\
\end{array}$$
 we get
\begin{eqnarray}\left\{\begin{array}{rcl}
&&x^2_+ -3x_+ -2\sum_{k=1}^a(\alpha^+_k +\chi (C_k))=-2 ;\hskip2cm\hfill{(3_a)}\\
&&x^2_+ + 3x_+ + 2\sum_{j=1}^a(\alpha^-_k+\chi (C_k)) =4.\hskip2cm\hfill{(3_b)}\\
\end{array}\right.\label{qqn}\end{eqnarray}
By formula (\ref{norm}), $\alpha^+_k +\alpha^-_k =x_k^2.$ Note in (\ref{qqn}) the equation ${\rm (3_b)-(3_a)}$ is
\begin{eqnarray}3x_++\sum_{k=1}^a (x^2_k+2\chi (C_k))   =3.\label{3aa}\end{eqnarray}
On the other hand,
 by the adjunction formula
we have
\begin{eqnarray}x_k^2-3x_k+2\chi(C_k)=0,\quad k=1,2,\cdots,a.\label{3bb}\end{eqnarray}
By (\ref{3aa}) and (\ref{3bb}) we have
\begin{eqnarray}x_++\sum_{k=1}^a x_k=1.\end{eqnarray}
Note $x_+ >x_-\geq -1$ and $x_k \geq 1.$ Hence we have $e_-=-u,
x_+=0$ and $a=1.$  Integrate $(c_1^{S^1}\!(T_X\!))^3\in H^2_{S^1}(X,\Z),$ via using Theorem \ref{ablocal} and the data  in Table 4 we get
\begin{eqnarray}\quad\quad-K_X^3=27+3c_1^2(F_+)+c^2_1
(N_{F_+}))+3c_1(F_+)c_1
(N_{F_+}))=54+x^2_+ +9x_+,\label{3cc}\end{eqnarray}
hence $-K_X^3=54.$
 By Proposition \ref{hodgenumber} the
Hodge numbers of $X$ are given by $h^{1,1}=1+a=2.$ By (\ref{3bb}) we have $h^{1,2}=g_1=0.$
 By the classifications of Fano threefolds \cite[Table \S 12.3]{ip}, $X$ is $P^1\times P^2$ or the blow-up of $P^3$ along a line.

For $X=P^1\times P^2,$ consider the exact sequence
$$0\rw T_{C_1}\rw T_{P^1\times P^2}|_{C_1}\rw N_{C_1} X\rw 0,$$
since $c_1(N_{C_1} X)=\alpha^+_1+\alpha^-_1=1$ we have $c_1( T_{P^1\times P^2}|_{C_1})=3,$ therefore $C_1\subset P^2.$ Hence $P^1\times (P^2-C_1)=P^1\times \C^2$ is $\C^*$-invariant and $P^2\subset P^1\times \C^2$ is a $\C^*$-fixed complex submanifold. This is clearly impossible since $T_{P^2}\nsubseteqq T_{P^1}\times \C^2.$

To sum up, in Case 3, $X$ is  the blow-up of $P^3$ along a line and such a semifree action is given in Example \ref{exppaper1}.

{\it Case 4}. $F_+$ and $F_-$ are $P^1\times P^1$ and a rational curve. Without loss of generality we may assume that $F_+ =P^1\times P^1, F_- =P^1.$ Note the Marsden-Weinstein reduced space $X_{red}=P^1\times P^1.$
 We may assume
 $e_- =\pm u+y_- v.$ From Proposition \ref{localization} we have $$\int_{P^1\times P^1}e^2_{-}=c_1(N_{F_-})=0,$$ hence $y_- =0$ and $e_- =\pm u.$ Therefore $y_+\geq 0.$
  Let $[C_k]=x_k u +y_k v$ and $e_+ =x_+ u +y_+ v.$ Then $x_k\geq 0,y_{k}\geq 0$ and $c_1(F_+)c_1(N_{F_+})=(2u+2v)(x_+ u +y_+ v)=2(x_+ +y_+)$ and $c_1^2(N_{F_+})=2x_+ y_+. $
By Lefschetz fixed point formula in Theorem \ref{lef}
$$\begin{array}{rcl}&&\frac{(1-t)t^2}{(1-t)^3}+\sum_{k=1}^a\frac{-t(1-t)\chi (C_k))-(\alpha^+_k -\alpha^-_k
t)t}{(1-t)^3}\\
&&+\frac{(1-t)^2-t(1-t)(x_+ +y_+)+t(1+t)
x_+ y_+}{(1-t)^3}  =1\\
\end{array}$$
 we get
\begin{eqnarray}\quad\left\{\begin{array}{rcl}
&&x_+ y_+ -x_+ -y_+ -\sum_{k=1}^a(\alpha^+_k+\chi (C_k)))=1;\hskip2cm\hfill{(4_a)}\\
&&x_+ y_+ + x_+ +y_+ +\sum_{k=1}^a(\alpha^-_k+\chi (C_k))) =1.\hskip2cm\hfill{(4_b)}\\
\end{array}\right.\label{case4}\end{eqnarray}
Hence $x_+ y_+=\sum_{k=1}^a(\alpha^+_k -\alpha^-_k).$ On the other hand, by formula (\ref{norm}) we have $\alpha^+_k +\alpha^-_k =2x_k y_k.$  In (\ref{case4}), the equation ${\rm (4_b)-(4_a)}$ is
\begin{eqnarray} x_+ +y_+ +\sum_{k=1}^a (x_k y_k+\chi(C_k))=0.\label{4a}\end{eqnarray}
By the adjunction formula
\begin{eqnarray}x_k y_k-x_k-y_k+\chi (C_k)=0,\quad k=1,2,\cdots,a.\label{4b}\end{eqnarray}
By (\ref{4a}) and (\ref{4b}) we have
\begin{eqnarray} x_+ +y_+ +\sum_{k=1}^a (x_k+ y_k)=0.\label{4c}\end{eqnarray}
Hence we must have $e_-=-u$ and $x_+\geq 0$ and  thus $\sum_{k=1}^a (x_k+y_k)=0.$  It is clearly impossible since $x_k+y_k\geq 1.$

Hence the Case 4 is impossible.

{\it Case 5}. $F_+$ and $F_-$ are $\widetilde{P^2}$ and a rational curve. Without loss of generality we may assume that $F_+ =\widetilde{P^2}, F_- =P^1.$ Note the Marsden-Weinstein reduced space $X_{red}=\widetilde{P^2}.$ Let $u,v\in H^2(\widetilde{P}^2,\Z)$ be a basis which
are dual classes of the fibres and the zero section $P^1$
respectively. Then
$\int_{\widetilde{P^2}}u^2=0,~~\int_{\widetilde{P^2}}v^2=\int_{P^1}
v=-1, $ and $\int_{\widetilde{P^2}}uv=1.$ The restriction of circle
bundle $\varphi^{-1}(a)\rightarrow X_{red}$ on the fibre of
$X_{red}\rightarrow P^1$ is a Hopf fibration with Euler class $v.$
We may assume $e_-=\pm u+x_- v.$ On the other hand,
by formula (\ref{norm}),
$c_1 (N_{F_-
})=\int_{{P}}v=-1,$
  Note they are exactly the Euler numbers of the
normal bundles $N_{F_{\pm}|X_{red}},$ hence by Proposition \ref{localization},
$\int_{\widetilde{P^2}}e_-^2  =1.$
Hence $x_-=\pm 1.$  Thus $e_-=-u-v$ or $e_- =u+v.$
  Let $[C_k]=x_k u +y_k v$ and $e_+ =x_+ u +y_+ v.$  Note $c_1(F_+)=3u+2v.$ So $c_1(F_+)c_1(N_{F_+})=(3u+2v)(x_+ u +y_+ v)=2x_+ +y_+ $ and $c_1^2(N_{F_+})=2x_+ y_+ -y^2_+. $
$$\begin{array}{rcl}&&\frac{(1-t)t^2-t^2}{(1-t)^3}+\sum_{k=1}^a\frac{-t(1-t)\chi (C_k)-(\alpha^+_k -\alpha^-_k
t)t}{(1-t)^3}\\
&&+\frac{2(1-t)^2-t(1-t)(2x_+ +y_+)+t(1+t)
(2x_+ y_+ -y^2_+)}{2(1-t)^3}  =1\\
\end{array}$$
 we get
\begin{eqnarray}\hskip 0.7cm \left\{\begin{array}{rcl}
&&\!\!\!\!\!(2x_+ y_+ -y^2_+)  -(2x_+ +y_+) -2\sum_{k=1}^a(\alpha^+_k +\chi (C_k))=-2; \hskip0.5cm\hfill{(5_a)}\\
&&\!\!\!\!\!(2x_+ y_+ -y^2_+) +(2x_+ +y_+) +2\sum_{k=1}^a(\alpha^-_k +\chi (C_k)) =4.\hskip0.5cm\hfill{(5_b)}\\
\end{array}\right.\label{qbn}\end{eqnarray}
In (\ref{qbn}), the equation ${\rm (5_b)-(5_a)}$ is
\begin{eqnarray}\begin{array}{rcl}
  (2x_+ +y_+) +\sum_{k=1}^a(\alpha^+_k+\alpha^-_k +2\chi (C_k))=3.
   \end{array}\label{zw}\end{eqnarray}
 From the formula (\ref{norm}) we have
 \begin{eqnarray}2 x_k y_k-y^2_k =\alpha^+_i +  \alpha^-_i,~~~k=1,2,\cdots,a.\label{5aa}\end{eqnarray}
On the other hand, by the adjunction formula for curves $C_k$ we have
 \begin{eqnarray}2x_ky_k-y^2_k -2x_k -y_k+2\chi(C_k)=0,~~~k=1,2,\cdots,a.\label{5bb}\end{eqnarray}
 By (\ref{5aa}) and (\ref{5bb}) we have
\begin{eqnarray} 2x_+ +y_+ +\sum_{k=1}^a (2x_k+ y_k)=3.\label{5cc}\end{eqnarray}
Hence we must have $e_-=-u-v$ and $x_-=y_-=-1.$ Since $x_+=\sum x_k-1$ and  $y_+=\sum y_k-1,$ we have
\begin{eqnarray} \sum_{k=1}^a (2x_k+ y_k)=3.\label{5dd}\end{eqnarray}
Hence $1\leq a\leq 3.$

{\it Subcase 1.} $a=1.$ Then $x_1=y_1=1$ or $x_1=0, y_1=3.$ The genus of interior fixed curve $C_1$ is $g_1=0.$
The
Hodge numbers of $X$ are given by $h^{1,1}=2+a=3$ and $h^{1,2}=0.$
Integrate $
(c_1^{S^1}\!(T_X\!))^3\in H^2_{S^1}(X,\Z),$ via using Theorem \ref{ablocal} and the data  in Table 4 we get
$$\begin{array}{rcl}\quad\quad-K_X^3&=&12c_1(F_-)+4c_1
(N_{F_-})+c^2_1
(N_{F_+}))+3c_1(F_+)c_1
(N_{F_+})\\
&=&20+2x_+ y_+-y^2_++3(2x_++y_+).\\\end{array}$$
If  $x_1=y_1=1$  then $-K_X^3=20.$
  If  $x_1=0,y_1=3$  then $x_+=-1$ and $y_+=2,$ hence  $-K_X^3=12.$
By the classifications of Fano threefolds  with $h^{1,1}=3$ in \cite[Table \S 12.4]{ip}, this subcase does not exist.

{\it Subcase 2.} $a=2.$ Without loss generality, we assume $x_1\geq x_2.$ If $x_1=1$ then $x_2=0$ and $y_1+y_2=1.$ Hence $x_+=0$ and $y_+=1$
and we have
$$-K_X^3=20+2x_+ y_+-y^2_++3(2x_++y_+)=22.$$
The
 Picard number of $X$ are given by $h^{1,1}=2+a=4.$  By the classifications of Fano threefolds  with $h^{1,1}=4$ in \cite[Table \S 12.5]{ip} and \cite{mm2}, this subcase does not exist.

{\it Subcase 2.} $a=3.$ Then $x_1=x_2=x_3=0$ and $y_1=y_2=y_3=1.$ Hence $x_+=-1$ and $y_+=2$
and we have
$$-K_X^3=20+2x_+ y_+-y^2_++3(2x_++y_+)=12.$$
The Picard number of $X$ are given by $h^{1,1}=2+a=5.$  By the classifications of Fano threefolds  with $h^{1,1}=5$ in \cite[Table \S 12.6]{ip}, this subcase does not exist.

To sum up, the Case 5 is impossible.

\vskip 1cm

In the rest of this proof, we consider the cases where both $F_+$ and $F_-$ are Fano surfaces. Suppose $C_1,\cdots,C_m$ are all fixed curves.
By Lefschetz fixed point formula in Theorem \ref{lef}
$$\begin{array}{rcl}\sum_{i=1}^m\!\!\frac{-t(1-t)\!\chi(C_1\!)-(\alpha^+_1\!-\!\alpha^-_1 t\!)t}{(1-t)^3}+\frac{-2t(1-t)^2-t(1-t)c_1(F_+)c_1(N_{F_+})-t(1+t)c_1^2(N_{F_+}))}{2(1-t)^3}&&\\
+\frac{2(1-t)^3-t(1-t)c_1(F_-)c_1(N_{F_-})+t(1+t)
c_1^2(N_{F_-})}{2(1-t)^3}  &=&1\\
\end{array}$$
Comparing the coefficients of $t^k$ for $k=0,1,2,3$ we get
$$\left\{\begin{array}{rcl}
\!\!\!(\!c_1^2\!(N_{F_+}\!)\!-\!c_1^2\!(N_{F_-}\!)\! )\! -\!(c_1(F_+\!)c_1\!(N_{F_+}\!)\!+\!c_1\!(F_-\!)c_1\!(N_{F_-}\!)\! ) \!-\!2\sum_{i=1}^m\!(\alpha^+_1+\chi \!(C_1)\!)\!\!\!&=&\!\!\!0 ;\quad\hfill{(a)}\\
\!\!\!(\!c_1^2\!(N_{F_+}\!)\!-\!c_1^2\!(N_{F_-}\!)\! )\! +\!(c_1(F_+\!)c_1\!(N_{F_+}\!)\!+\!c_1\!(F_-\!)c_1\!(N_{F_-}\!)\! )\! +\!2\sum_{i=1}^m\!(\alpha^+_1+\chi \!(C_1)\!)\!\!\!&=&\!\!\!0.\quad\hfill{(b)}\\
\end{array}\right.$$
By (a)+(b) and (a)-(b), we have
\begin{eqnarray}\quad\left\{\begin{array}{rcl}
\!\!\!c_1^2(N_{F_+})-c_1^2(N_{F_-}) &=&\sum_i\!(\alpha^+_i\!-\!\alpha^-_i\!);\quad\hfill{(c)}\\
\!\!\!(c_1(F_+)c_1(N_{F_+})+c_1(F_-)c_1(N_{F_-}) )&=& -\sum_i\!(\!\alpha^+_i\!+\!\alpha^-_i\!+\!2\chi\! (C_1\!)\!).\quad\hfill{(d)}
\end{array}\right.\label{surff}
\end{eqnarray}
Integrate $(c_1^{S^1}\!(T_X\!))^3\in H^2_{S^1}(X,\Z),$ via using Theorem \ref{ablocal} together with Proposition \ref{rinterior} and \ref{rlocal} we get
\begin{equation}\begin{array}{rcl}
-K_X^3&=&3(c^2_1({F_{+}})+c^2_1({F_{-}}))+c^2_1(N_{F_{+}})+c^2_1(N_{F_{+}})\\
&&+3(c_1({F_{+}})c_1(N_{F_{+}})+c_1({F_{-}})c_1(N_{F_{-}}))\\
\end{array}\label{can3}\end{equation}

{\it Case 6.}
Both $F_+ $ and $F_-$ are $P^2.$ Note the Marsden-Weinstein reduced space $X_{red}=P^2.$
Assume $e_{\pm}=x_{\pm} u$ and $[C_k]=x_k u .
$   Note $x_k >0$ and $c_1(F_{\pm})=3u.$ So $c_1(F_{\pm})c_1(N_{F_{\pm}})=3x_{\pm}$ and $c^2_1(N_{F_{\pm}})=x^2_{\pm}.$
By the adjunction formula
\begin{eqnarray}(x_k -3)x_k=2g_k-2 ,~~~k=1,2,\cdots,a.\label{dr1}\end{eqnarray}
Since $h^{1,1}=2+a,$ if $a>3$ then $X=P^1\times S_d$ and it very easy to check it is impossible.
Hence it suffices to consider $a=1,2$ and $a=3.$

{\it Subcase 1.} $a=1.$ In this subcase we have $h^{1,1}=3$ and $h^{1,2}=g_1.$  By (d) of (\ref{surff}) we have
\begin{eqnarray}3x_+ +3x_-  = -(\alpha^+_1+\alpha^-_1+2\chi (C_1))=-x_1^2+2g_1-2,\label{dr2}\end{eqnarray}
hence $6x_+ -3x_1 =-x_1^2+2g_1-2.$  By (\ref{dr1}) we must have $x_+=0.$ By (c) of (\ref{surff}) we have
$\alpha^+_1-\alpha^-_1=-x_1^2,$ since $\alpha^+_1+\alpha^-_1=x_1^2$ we must have $\alpha^+_1=0,\alpha^-_1=x_1^2.$
By (\ref{can3}) we have
$$-K^3_X=54+x^2_++x^2_- +9(x_+ +x_-)=54+x^2_1-9x_1 $$
By the classifications of Fano threefolds \cite[Table \S 12.4]{ip}, the Fano threefolds with $h^{1,1}=3$ must have $g_1=h^{1,2}\leq 8,$ hence
$x^2_1-3x_1\leq 14$ and thus $1\leq x_1 \leq 6.$ Using
 the classifications in \cite[Table \S 12.4]{ip}, case by case check, it is easy to see only $x_1=1,2$ are possible and when $x_1=1$ we have $g_1=0$
and $-K^3_X=46,$ when $x_1=2$ we have $g_1=0$
and $-K^3_X=40.$

{\it Subcase 2.} $a=2.$ In this subcase we have $h^{1,1}=4$ and $h^{1,2}=g_1+g_2.$  By (d) of (\ref{surff}) we have
\begin{eqnarray}3x_+ +3x_-  = -\sum_{i=1}^2(\alpha^+_i+\alpha^-_i+2\chi (C_i))=-x_1^2-x^2_2+2g_1+2g_2-4,\label{dr2}\end{eqnarray}
hence $6x_+ -3x_1-3x_2 =-x_1^2-x_2^2+2g_1+2g_2-4.$   By (\ref{dr1}) we must have $x_+=0.$
Therefore (\ref{can3}) we have
$$-K^3_X=54+x^2_++x^2_- +9(x_+ +x_-)=54+(x_1+x_2)^2-9(x_1+x_2) $$
By the classifications of Fano threefolds \cite[Table \S 12.5]{ip}, the Fano threefolds with $h^{1,1}=4$ must have $h^{1,2}=0$ or $1.$

If $h^{1,2}=1,$ then $g_1+g_2=1.$ Without loss of generality we assume $g_1=0$ and $g_2=1.$ Then $x_1=1$ or $2$ and $x_2=3.$ Only
$(x_1,x_2)=(1,3)$ and $(2,3)$ are possible, and in these cases $-K^3_X=24.$  By the classifications in \cite[Table \S 12.5]{ip} and \cite{mm2}, $X$ is a divisor on $P^1\times P^1\times P^1\times P^1$ of multi-degree $(1,1,1,1).$ Hence we have the following two exact sequences:
 $$0\rw T_X\rw T_{(P^1)^4}\rw {\mathscr O}_{(P^1)^4}(1,1,1,1)\rw 0,\quad 0\rw T_{C_1}\rw T_{X}|_{C_1}\rw N_{C_1}X\rw 0$$
Since  $c_1(N_{C_1}X)= x^2_1,$ by the second exact sequence we have $c_1( T_{X}|_{C_1})=2+x^2_1>2.$ However by the first exact sequence we have $c_1(T_X|_{C_1})\leq 2$ since  $T_{(P^1)^4}= {\mathscr O}_{P^1}(2)\oplus {\mathscr O}_{P^1}(2)\oplus {\mathscr O}_{P^1}(2)\oplus{\mathscr O}_{P^1}(2),$
 it is a contradiction, hence this case is impossible.

If $h^{1,2}=0,$ then $g_1=g_2=0.$  Then $x_1$ and $x_2$ are equal $0$ or $3.$ In these cases we have $-K^3_X=36.$ By the classifications in \cite[Table \S 12.5]{ip} and \cite{mm2}, $X$ is the blow-up of $Proj(T_{P^2})$ along a disjoint union of two curves.

{\it Subcase 3.} $a=3.$ Just as in Subcase 2, we have $x_+=0$ and
$$-K^3_X=54+(x_1+x_2+x_3)^2-9(x_1+x_2+x_3). $$
Since $h^{1,1}= 5$ we have $h^{1,2}=0$ and thus $g_1=g_2=g_3=0.$ Hence $x_k=1$ or $2$  for $1\leq k\leq 3$ and
hence $-K^3_X=36$ or $24.$  By the classifications in \cite[Table \S 12.6]{ip}, the possible $X$ is the blow-up of $Y$ along two exceptional lines of the
blowing up $Y\rightarrow P^3$ along two lines. This is impossible, otherwise there is a semifree $\C^*$-action on $Y$ whose maximum and minimum component are $P^2$ and whose interior fixed points is a rational curve, it is impossible by analysis in Subcase 1.
Hence the Subcase 3 is impossible.

To sum up in the Case 6 we have only the following possibilities:
  \begin{enumerate}
\renewcommand{\labelenumi}{(\arabic{enumi})}
\item $-K_X^3=40,$ the Hodge number $h^{1,1}=3$ and $h^{1,2}=0.$ $X$ is the blow up of $P^1\times P^2$ along a conic on $t\times P^2.$
\item $-K_X^3=46,$ the Hodge number $h^{1,1}=3$ and $h^{1,2}=0.$ $X$ is the blow up of $P^3$ along a disjoint union of a line and a point.
\item $-K_X^3=36,$ the Hodge number $h^{1,1}=4$ and $h^{1,2}=0.$ $X$ is the blow-up of  $Proj(T_{P^2})$ along two curves.
\end{enumerate}

\begin{lemma}\label{qqr} There is no semifree $\C^*$ action on the smooth quadric $Q\subset P^4.$
\end{lemma}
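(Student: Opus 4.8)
The plan is to assume $Q$ carries an effective semifree $\C^*$-action and to rule out, one by one, every possible shape of its fixed-point set. I would start from the numerical data of the quadric threefold: it is Fano with $h^{1,1}=1$, $h^{1,2}=0$, $-K_Q^3=54$, and by Proposition~\ref{torfree} its integral homology is torsion free, so its Poincar\'e polynomial is $1+t^2+t^4+t^6$. First I would show that no connected component of $Q^{\C^*}$ can be a surface: by Proposition~\ref{maxmin} such a component would be a del Pezzo surface isomorphic to the Marsden--Weinstein reduced space, hence of Picard number $\geq1$, and then Proposition~\ref{hodgenumber} together with $h^{1,1}(Q)=1$ forces it to be $P^2$, with no interior fixed points and the remaining component a point or a rational curve; Theorem~\ref{two} and the theorem following it then identify $Q$ with $P^3$ or $P^1\times P^2$, both absurd. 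Thus $F_+$ and $F_-$ are each a point or a copy of $P^1$ (rational by Proposition~\ref{hodgenumber}(A)), and since $h^{1,2}(Q)=\sum_k g_k=0$ every interior fixed curve is a smooth rational curve.

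Next I would use that the moment map is a perfect Bott--Morse function and match $\sum_j t^{2\nu_j^-}P_t(F_j)$ with $1+t^2+t^4+t^6$; in the notation of Proposition~\ref{str} the only surviving configurations are: (i) $F_\pm$ both isolated points with one interior fixed rational curve ($a=1$, $b=c=0$); (ii) $F_\pm$ both isolated points with one interior fixed point of index~$2$ and one of index~$4$ ($a=0$, $b=c=1$); (iii) one of $F_\pm$ a point and the other $\cong P^1$, together with one interior isolated fixed point; (iv) $F_+=F_-=P^1$ with no interior fixed point. Case~(iv) is eliminated at once: $Q^{\C^*}$ then has exactly two components, both $\cong P^1$, so by the two-component analysis of Section~\ref{sect6} we would get $Q\cong P^3$, contradicting $-K_Q^3=54$. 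Cases~(ii) and~(iii) are eliminated by the holomorphic Lefschetz fixed point formula (Theorem~\ref{lef}): since $H^{>0}(Q,{\mathscr O}_Q)=0$, the sum of the fixed-point contributions from Table~1 and Proposition~\ref{lefinterior} must be the constant $1$; but in case~(ii) that sum equals $\frac{1-t+t^2-t^3}{(1-t)^3}=\frac{1+t^2}{(1-t)^2}$, and in case~(iii) the coefficient of $t^2$ comes out $1$ instead of $3$ (the two mirror versions of~(iii) being exchanged by $t\mapsto t^{-1}$) --- contradictions in either case.

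What is left, and what I expect to be the main obstacle, is case~(i). Here the Hodge and localization data pin $X$ down to the quadric $Q\subset P^4$ itself, with a single interior fixed rational curve $C$, and equation~(\ref{reqr}) with $a=1$ gives $\alpha_1^++\alpha_1^-=4$, so $c_1(N_{C/Q})=4$. I would then play the two exact sequences $0\to T_Q\to T_{P^4}|_Q\to{\mathscr O}_Q(2)\to0$ and $0\to T_C\to T_Q|_C\to N_{C/Q}\to0$ against one another: the second yields $c_1(T_Q|_C)=c_1(T_C)+c_1(N_{C/Q})=2+4=6$, while the first, combined with the Euler sequence $0\to{\mathscr O}_{P^4}\to{\mathscr O}_{P^4}(1)^{\oplus5}\to T_{P^4}\to0$, expresses $c_1(T_Q|_C)$ through the degree of $C$ in $P^4$; the contradiction then follows upon determining that degree from the fixed-point data and finding the two values incompatible. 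This last point --- understanding precisely how the interior fixed rational curve sits in the projective embedding of $Q$, i.e. computing its degree --- is where all the weight of the argument lies; the other three configurations fall to the short numerical computations above.
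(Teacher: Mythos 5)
Your reduction of the problem is essentially the paper's: you use $h^{1,1}(Q)=1$, $h^{1,2}(Q)=0$ together with Propositions \ref{maxmin} and \ref{hodgenumber} to rule out fixed surfaces and enumerate the four candidate configurations, then kill (iv) by the two-component classification, (ii) by the holomorphic Lefschetz formula (equivalently by Corollary \ref{p3}, since a semifree action with only isolated fixed points would need eight of them), and (iii) by comparing coefficients in the Lefschetz identity. The paper handles (iii) instead by localization, obtaining $-K_Q^3=50\neq 54$, but your $t^2$-coefficient argument is correct and equivalent. Up to that point the proposal is sound.

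The gap is case (i), which you explicitly leave open, and it is not a removable gap: the step you flag --- determining the degree of the interior fixed rational curve $C$ in $P^4$ --- cannot produce the contradiction you hope for. From $\alpha_1^+=\alpha_1^-=2$ one gets $c_1(N_{C/Q})=4$, hence $c_1(T_Q|_C)=6$; but $-K_Q={\mathscr O}_Q(3)$, so adjunction forces $3\deg C=6$, i.e. $C$ is a conic, and the two exact sequences are then perfectly consistent ($c_1(T_{P^4}|_C)=10$ minus $c_1({\mathscr O}_{P^4}(2)|_C)=4$ equals $6$). Worse, the configuration of case (i) is actually realized: write $Q=\{z_0z_4+z_1z_3+z_2^2=0\}$ and let $t$ act by ${\rm diag}(t,1,1,1,t^{-1})$. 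This preserves $Q$, is effective, and is free off the fixed locus (a point of $Q$ with $z_1=z_2=z_3=0$ satisfies $z_0z_4=0$, so no point of $Q$ is stabilized by $t=-1$); its fixed locus on $Q$ consists of the two points $[1:0:0:0:0]$, $[0:0:0:0:1]$ and the smooth conic $\{z_0=z_4=0,\ z_1z_3+z_2^2=0\}$, with isotropy weights $(-1,-1,-1)$, $(1,1,1)$ and $(0,1,-1)$ respectively. So the lemma as stated appears to be false. The paper's own proof inherits the same defect: it disposes of the two-isolated-points case by citing Case 1, Subcase 1 of Theorem \ref{noiso}, whose contradiction rests on the unjustified (and, by the above, incorrect) assertion $c_1(T_{P^4}|_{C_1})=5$, i.e. that $C_1$ is a line rather than a conic. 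Your instinct that all the weight of the argument sits on that degree computation was exactly right; what it reveals, unfortunately, is that the intended contradiction is not there.
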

\begin{proof}
Since the Picard number of $Q$ is $1,$ the connected component of the fixed point set where the moment map takes  maximum or minimum is a line or an isolated point by Proposition \ref{hodgenumber}. If both of them are isolated point then there is exactly one interior fixed rational curve, which is impossible by analysis in Case 1. If both of them are lines then there is no interior fixed points and it is neither possible by our classification in Table 5. If one of then is a line and the other is an isolated point, without loss of generality suppose $F_+$ is a rational curve and $F_-$ is a point.
Hence $b=1$ and $c=0.$ Integrate $1\in H^*_{S^1}(Q,\Z)$  we get $c_(N_{F_+})=0;$ integrate $(c_1^{S^1}\!(T_X\!))^3\in H^2_{S^1}(X,\Z)$ we have
 $$\begin{array}{rcl}
-K_Q^3=12c_1({F_{+}})+4c_1(N_{F_{+}})+27-b-c=50,
\end{array}$$
however it is well known that $-K_Q^3=54.$
\end{proof}

{\it Case 7.}
 Both $F_+ $ and $F_-$ are $P^1\times P^1.$  Note the Marsden-Weinstein reduced space $X_{red}=P^1\times P^1$ and $h^{1,1}=3+a.$
 If $a\geq 2,$ by the classifications of Fano threefolds in \cite[Table \S 12.6]{ip}, $-K^3_X=28$ or $36,$ or $X$ is of form $P^1\times S_k$ with $k\leq 6,$
where $S_k$ is the blow up of $P^2$ at $9-k$ distinct general points. The last case is clearly impossible since there is no semifree  $\C^*$ action on $P^2$ whose fixed point set consisting of more than two isolated fixed points and a fixed line. If $-K^3_X=28$ then  $X$ is the blow-up of $Y$ along the three exceptional lines of the blowing up $Y\rw Q,$ and $Y$ is the blow-up of the smooth quadric $Q$ along a conic. It is possible, otherwise we will obtain a semifree $\C^*$-action on the quadric $Q$ after blowing down, which will contradict to Lemma \ref{qqr}.
Hence the only possibility is  $-K^3_X=36$ if $a\geq 2$

In the following proof of Case 7,  we  assume $a=1.$

  Let $u,v\in H^2(P^1\times P^1,\Z)$ be a basis which
are dual classes of the fibres and the zero section $P^1$
respectively. Assume $e_{\pm}=x_{\pm} u+y_{\pm}v,$ and $[C_1]=x_1 u +y_1 v.$
  Note $c_1(F_{\pm})=2u+2v.$ So $c_1(F_{\pm})c_1(N_{F_{\pm}})=2x_{\pm}+2y_{\pm}$ and $c^2_1(N_{F_{\pm}})=2x_{\pm}y_{\pm}.$
 On the other hand, by the adjunction formula
\begin{eqnarray}x_1 y_1-x_1-y_1=-\chi (C_1).\label{wd5}\end{eqnarray}
By the classifications of Fano threefolds \cite[Table \S 12.5]{ip} and \cite{mm2}, the Fano threefolds with $h^{1,1}=4$ must have $24\leq -K^3_X\leq 46$ and $h^{1,2}=1$ or $h^{1,2}=0.$ Note $c_1^2(F_+)=c_1^2(F_-)=8,$ by (\ref{can3}) we have
\begin{eqnarray}-K_X^3=48 +2(x_+y_++x_-y_-)+6(x_+ +y_++x_- +y_- ).\label{dp1}\end{eqnarray}

{\it Subcase 1.} $h^{1,2}=1.$ Then by  (\ref{wd5}) we have $(x_1-1)(y_1-1)=1,$ since $x_1\geq 0,y_1\geq 0 $ and $x_1+y_1\geq 1$ we must have
$x_1=y_1=2.$ Since $\alpha^+_1+\alpha^-_1=2x_1 y_1,$  by (d) of (\ref{surff}) we have
\begin{eqnarray}2(x_+ +y_++x_- +y_-) = -(\alpha^+_1+\alpha^-_1+2\chi (C_1))=-8\label{dp2}\end{eqnarray}
hence $x_+ +y_+=0.$ by (\ref{dp1}) and (\ref{dp2}) we have
$$
-K_X^3=24-2x_+^2 -2(x_+-2)(x_++2)
=32-4 x_+^2$$
hence  $-K_X^3=24$ or $28.$

 If $-K_X^3=28,$ by the classifications of Fano threefolds \cite[Table \S 12.5]{ip} and \cite{mm2}, $X$ is the blow up of the cone over a smooth quadric $S\subset P^3$ along a disjoint union of the vertex and an elliptic curve on $S.$  Clearly both exceptional divisors are not point-wise fixed. Since there is no interior isolated fixed points  we may assume there are at least one fixed
curve  on each exceptional divisors which are not the blowing up center by Proposition \ref{blowu}. Hence the $\C^*$-action on $X$
has at least $2$ interior fixed curves, which is a contradiction.

{\it Subcase 2.} $h^{1,2}=0.$ Then by  (\ref{wd5}) we have $(x_1-1)(y_1-1)=0.$ Without loss of generality we assume $x_1=1.$ Then  $\alpha^+_1+\alpha^-_1=2 y_1\geq 0.$  By (d) of (\ref{surff}) we have
\begin{eqnarray}2(x_+ +y_++x_- +y_-) = -(\alpha^+_1+\alpha^-_1+2\chi (C_1))=-2(1+y_1)\label{dp3}\end{eqnarray}
hence
 \begin{eqnarray}x_+ +y_+=0.\label{dpp}\end{eqnarray}
 By (\ref{dp1}) and (\ref{dp2}) we have
$$\begin{array}{rcl} -K_X^3&=&48- 2x_+^2+2(x_+-1)(-x_+-y_1)-6(1+y_1 )\\
&=&42-4x^2_++2x_+(1-y_1)-4y_1\\
&=&-3x^2_+-[x_+-(1-y_1)]^2+ (y_1^2-6y_1+43).\\
\end{array}$$
By the classifications of Fano threefolds \cite[Table \S 12.5]{ip} and \cite{mm2}, if $h^{1,1}=4$ and $h^{1,2}=0$ then
$26\leq 42-4x^2_++2x_+(1-y_1)-4y_1\leq 46,$ equivalently
  \begin{eqnarray}(-4x^2_++2x_+)-4\leq y_1(2x_++4)\leq (-4x^2_++2x_+)+16,\label{dp9}\end{eqnarray}
  in particular $x_+\not=-2$ since if $x_+=-2$ we have $-K^3_X=22.$ In the following we will consider cases $x_+>-2$ and $x_+<-2$
respectively.

If $x_+<-2,$ since $\frac{-4x^2_++2x_+}{2x_++4}=-2x_++5-\frac{20}{2x_++4},$
 by (\ref{dp9}) we have
\begin{eqnarray}2x_++5-\frac{24}{2x_++4}\geq y_1\geq 2x_++5-\frac{4}{2x_++4},\label{dp10}\end{eqnarray}
Note if $x_+\leq -5$ then $2x_++5-\frac{24}{2x_++4}\leq 0,$ contradict to that $y_1\geq 0.$ Hence $x_+\geq -4.$
If $x_+=-3$ then $-K^3_X=2y_1$
If $x_+=-4$ then $-K^3_X=4y_1-30.$
Since $-K^3_X\geq 30,$ we must have $y_1\geq 15.$ Hence $c_1(N_{C_1}X)=2x_1y_1\geq 30.$ However, by the classifications in \cite[Table \S 12.5]{ip} and \cite{mm2},
each $X$ is obtained from several blowing-ups  from $P^1\times P^1\times P^1, P^1\times S, P^3$ or  obtained from several blowing-ups from divisors (with small bidegree $(1,1)$) in
$P^2\times P^2,$ hence $c_1(T_X|_{C_1})\leq 6.$ (We give an example to indicate it. For example, the blow up of $P^3$ along two disjoint lines, after the  blowing up the first line, the obtained threefold is denoted by $X_1.$ Let $E_1,E_2$ be the
exceptional divisors, then $0\rw T_{E_1}\rw T_{X_1}\rw N_{E_1}X_1\rw 0$ and  $0\rw T_{E_2}\rw T_X\rw N_{E_2}X_1\rw 0.$ Hence $c_1(T_X)\leq c_1(T_{E_2})\leq c_1(T_{X_{1}})\leq c_1(T_{E_1})\leq c_1(T_{P^3})=4$).
 But by the exact sequence $0\rw T_{C_1}\rw T_X|_{C_1}\rw N_{C_1}X\rw 0,$ we have $c_1(T_X|_{C_1})=2+c_1(N_{C_1}X)\geq 32,$ it is a contradiction. Hence the case $x_+<-2$ is impossible.

  In the following we assume $x_+>-2.$

  If $x_+=-1$ then $-K^3_X=36-2y_1.$ If $y_1\geq 3$ then by the classifications in \cite[Table \S 12.5]{ip} and \cite{mm2}, we may have $-K^3_X=26,30$
and $X$ is the blow-up of $P^1\times P^1\times P^1$ along a curve of tridegree of $(1,1,2)$ or $(1,1,3),$ neither of them are possible just for the similar reasoning as the proceeding paragraph:$c_1(T_X|_{C_1})=2+c_1(N_{C_1}X)=2+2x_1y_1= 8,$ but $c_1(T_X|_{C_1})\leq c_1(T_{P^1\times P^1\times P1}|_{C_1})\leq 6.$ Hence $-K^3_X=36,34,32.$

Now we assume $x_+>-1.$ Since $\frac{-4x^2_++2x_+}{2x_++4}=-2x_++5-\frac{20}{2x_++4},$
 by (\ref{dp9}) we have
\begin{eqnarray}2x_+-1\leq 2x_++5-\frac{24}{2x_++4}\leq y_1\leq 2x_++5-\frac{4}{2x_++4},\label{dp10}\end{eqnarray}
  since $y_1$ is an integer we have $2x_+-1\leq y_1\leq 2x_++4.$
  If $y_1=2x_+-1$ then $ -K_X^3=46-8x^2_+-4x_+$ since $x_+\geq 0$ we have  $ -K_X^3=46,34$ for $x_+=0,1.$
  If $y_1=2x_++k$ for $0\leq k\leq 4$ then $$\begin{array}{rcl} -K_X^3&=&42-4x^2_++2x_+(1-2x_+-k)-4(2x_+ +k)\\
  &\leq & 42-8x_+^2-6x_+\\
   \end{array}$$
 and it will be less than $42-32-12=-2$ if $x_+\geq 2,$ if $x_+=1$ then $-K^3_X=28-6k.$ Hence by the classifications in \cite[Table \S 12.5]{ip} and \cite{mm2}, we must have $x_+=0$ and $$-K_X^3=42-4x^2_++2x_+(1-2x_+-k)-4(2x_+ +k)=42-4k,$$
 hence $-K_X^3=42,38,34,30$ for $y_1=2x_+,2x_++1,2x_++2,2x_++3$ respectively.

 To sum up, if $h^{1,2}=0$ then the possible values of $-K_X^3$ are $ 36,34,32,46,42,38,30.$  We will exclude
  $-K_X^3=42$ and $46.$ If  $-K_X^3=32,$ it will have two possibilities, we will exclude one case of them.

If $-K^3_X=32,$ then $X$ has two possibilities by the classifications in \cite[Table \S 12.5]{ip}. In the first case $X$ is the blow-up of $Y$ along a proper transformation of a conic passing through two point $p,q$ and $Y$ is the blow-up of a quadric $Q$ at $p,q.$
In the second case, $X$ is the blow-up of $P^1\times P^2$ along two curves of bidegree of $(2,1)$ and $(1,0)$-respectively.  The first case is impossible since otherwise
we will get a semifree $\C^*$ action on the quadric $Q$ after some equivariant blow downs, which will contradict to Lemma \ref{qqr}.

If $-K_X^3=46,$  by the classifications of Fano threefolds \cite[Table \S 12.5]{ip}, $X$ is the blow-up of $Y$ along the two exceptional lines of the blowing up $Y\rw P^3,$ and $Y$ is the blow-up of $P^3$ along a line. We claim it is impossible by using Proposition \ref{blowu}.
If both exceptional divisors of the blow up $X\rw Y$ are point-wise fixed, then after blowing down we  get a semifree $\C^*$ action on $Y$ whose fixed point set consists of three rational curves which is impossible by Case 2.
If both exceptional divisors of the blow up $X\rw Y$ are not point-wise fixed,  then the $\C^*$ action on $X$ will have more than $2$  interior fixed curves or at least one interior isolated fixed point.
If one exceptional divisor of the blow up $X\rw Y$ is point-wise fixed and the other is not, then the $\C^*$ action on $X$ will also have more than $2$  interior fixed curves or at least one interior isolated fixed point.

If $-K_X^3=42,$  by the classifications of Fano threefolds \cite[Table \S 12.5]{ip}, $X=P^1\times S_7,$ where $S_7$ is the blow-up of $P^2$ at two distinct points. It is clear impossible since there is no semifree  $\C^*$ action on $S_7$ with an isolated fixed point and two fixed lines.

The cases $-K_X^3=30$ and $34$ are impossible. We take  $-K_X^3=30$ as an example and the case $-K_X^3=34$ is proved in the same way.
If  $-K_X^3=30$ then the possible $X$ is the blow-up of $(P^1)^3$
along a curve $C$ of tridegree $(1,1,2).$  The blow up center curve $C$ must be point-wise fixed. If $C$ is not point-wise fixed, then the restricted
 action of $C^*$ on $C$ have isolated fixed points, the fixed point set of the induced $\C^*$ action on $(P^1)^*$ has exactly two connected components and both of them are surfaces isomorphic to $P^1\times P^1,$ and $C$ is a curve connected these two surfaces (otherwise the $\C^*$-action on $X$ has interior isolated fixed points) since the moment map is strictly increasing along the $\C^*$-orbits. Since the $\C^*$-action on $X$ has no interior isolated fixed points, by Proposition \ref{blowu}, the $\C^*$-action on $X$ has at least two interior fixed curves, a contradiction.
 Hence $C$ is  point-wise fixed. Moreover after blow-down we get a semifree $\C^*$-action on   $(P^1)^3$ whose fixed points are exactly two surface both isomorphic to $P^1\times P^1$ and $C$ is a curve located in one of these fixed surfaces. However it will contradict to the fact that $C$ is curve of tridegree $(1,1,2).$

To sum up in the Case 7 we have only the following possibilities:
  \begin{enumerate}
\renewcommand{\labelenumi}{(\arabic{enumi})}
\item $-K_X^3=24,$ the Hodge number $h^{1,1}=4$ and $h^{1,2}=1.$ $X$ is a divisor of multidegree $(1,1,1,1)$ in $P^1\times P^1\times P^1\times P^1.$
\item $-K_X^3=38,$ the Hodge number $h^{1,1}=4$ and $h^{1,2}=0.$ $X$ is the blow-up of $(P^1)^3$
along a curve of tridegree $(0,1,1).$
\item $-K_X^3=32,$ the Hodge number $h^{1,1}=4$ and $h^{1,2}=0.$ $X$ is the blow-up of  $P^1\times P^2$ along two curves of bidegree $(2,1)$ ans $(1,0).$
\item $-K_X^3=36,$ the Hodge number $h^{1,1}=5$ and $h^{1,2}=0.$ $X$ is the blow-up of  $Proj(T_{P^2})$ along two lines.
\end{enumerate}

{\it Case 8.} Both $F_+ $ and $F_-$ are
 $\widetilde{P}^2.$  Note the Marsden-Weinstein reduced space $X_{red}=\widetilde{P}^2$ and  $h^{1,1}=3+a.$ Let $u,v\in H^2(\widetilde{P}^2,\Z)$ be a basis which
are dual classes of the fibres and the zero section $P^1$
respectively. Then
$\int_{\widetilde{P^2}}u^2=0,~~\int_{\widetilde{P^2}}v^2=\int_{P^1}
v=-1, $ and $\int_{\widetilde{P^2}}uv=1.$
Assume $[C_i]=x_i u +y_i v,$ where  $x_i\geq 0$ and $y_i\geq 0$ and $x_i+y_i>0.$
  Note $c_1(F_{\pm})=3u+2v.$ So $c_1(F_{\pm})c_1(N_{F_{\pm}})=2x_{\pm}+y_{\pm}$ and $c^2_1(N_{F_{\pm}})=2x_{\pm}y_{\pm}-y^2_{\pm}.$
By the adjunction formula for curve $C_i$ we have $(-c(F_+)+[C_i])[C_i]+2\chi(C_i)=0,$ i.e.,
\begin{eqnarray}2x_iy_i-y^2_i-2x_i -y_i+2\chi (C_i)=0.\label{dd5}\end{eqnarray}
 Note $c_1^2(F_+)=c_1^2(F_-)=8,$ by (\ref{can3}) we have
\begin{eqnarray}-K_X^3=48 +(2x_+y_+-y_+^2+2x_-y_--y_-^2)+3(2x_+ +y_++2x_- +y_- ).\label{dq4}\end{eqnarray}
By (d) of (\ref{surff}) we have
\begin{eqnarray}2x_+ +y_++2x_- +y_-=-\sum_{i=1}^a (\alpha^+_i+\alpha^-_i+2\chi(C_i)),\label{dq1}\end{eqnarray}
since $2x_+ +y_++2x_- +y_-=2(2x_+ +y_+)-\sum_{i=1}^a(2x_i+y_i)$ and $\alpha^+_i+\alpha^-_i=2x_iy_i-y^2_i,$ we have $2x_+ +y_+=0.$

{\it Subcase 1.} $a=1$ and $h^{1,2}=1.$ By the classifications of Fano threefolds \cite[Table \S 12.5]{ip}, the Fano threefolds with $h^{1,1}=4$ must have $-K^3_X\geq 24.$  Then by  (\ref{dd5}) we have $x_1=\frac{y_1(y_1+1)}{2(y_1-1)},$ since $x_1\geq 0,y_1\geq 0 $ and $x_1+y_1\geq 1$ we must have
$y_1\geq 2.$  If $y_1\geq 4$  then $x_1=1+\frac{y_1}{2}+\frac{1}{y_1-1}$ is not an integer. Hence $y_1=2$ or $y_1=3$ and $(x_1,y_1)=(3,2)$ or $(3,3).$
Hence $\alpha^+_1+\alpha^-_1=2x_1 y_1-y_1^2=8$ or $9.$ By (d) of (\ref{surff}) we have
\begin{eqnarray}2x_+ +y_++2x_- +y_-=-(\alpha^+_1+\alpha^-_1).\label{dq1}\end{eqnarray}
If $(x_1,y_1)=(3,2),$  use  (\ref{dq4}) we get
$$
-K_X^3=24+(-8x^2_+ +2(x_+-3)(y_+-2)-(y_+-2)^2)
=32-16 x^2_+ $$
hence  $x_+=0$ and $-K_X^3=32.$
If $(x_1,y_1)=(3,3),$ then
$$
-K_X^3=21+(-8x^2_+ +2(x_+-3)(-2x_+-3)-(-2x_+-3)^2)
=30-16 x^2_+-6x_+,
$$
therefore   $x_+=0$ or $\pm 1$ and $-K_X^3=30$ or $18.$

However, by the classifications of Fano threefolds \cite[Table \S 12.5]{ip}, the Fano threefolds $X$ with $h^{1,1}=4$ and $h^{1,2}=1$ must have $-K^3_X=24$ or $28,$ hence this subcase can't happen.

{\it Subcase 2.} $a=1$ and $h^{1,2}=0.$ Then by  (\ref{dd5}) we have $x_1=\frac{y_1(y_1+1)+2}{2(y_1-1)},$ since $x_1\geq 0,y_1\geq 0 $ and $x_1+y_1\geq 1$ we must have
$y_1\geq 2.$  If $y_1\geq 6$  then $x_1=1+\frac{y_1}{2}+\frac{2}{y_1-1}$ is not an integer. Hence $2\leq y_1\leq 5.$  In order that both $x_1$ and $y_1$ are integers, it is easy to check that $(x_1,y_1)=(4,2)$ and $(x_1,y_1)=(4,5)$ are only possibilities.
If $(x_1,y_1)=(4,2)$ then by (d) of (\ref{surff}) we have
\begin{eqnarray}2x_+ +y_++2x_- +y_- = -(\alpha^+_1+\alpha^-_1+2\chi (C_1))=-14.\label{dq6}\end{eqnarray}
By (\ref{dq4}) and (\ref{dq6}) we have
$$
-K_X^3=6-8x_+^2- 2(x_+-4)(2x_++2)-(2x_++2)^2
=18-16 x^2_+ +4x_+$$
Hence   $-K_X^3\leq 22.$
If $(x_1,y_1)=(4,5)$ then then By (d) of (\ref{surff}) we have
\begin{eqnarray}2x_+ +y_++2x_- +y_- = -(\alpha^+_1+\alpha^-_1+2\chi (C_1))=-17.\label{dq6}\end{eqnarray}
By (\ref{dq4}) and (\ref{dq6}) we have
$$\begin{array}{rcl}
-K_X^3&=&-3-8x^2_+ -2(x_+-4)(2x_++5)-(2x_++5)^2\\
&=&12-16 x^2_+ -14x_+,\end{array}$$
hence $-K_X^3\leq 12.$ However, by the classifications of Fano threefolds \cite[Table \S 12.5]{ip} and \cite{mm2}, the Fano threefolds $X$ with $h^{1,1}=4$ and $h^{1,2}=0$ must have $-K^3_X\geq 26,$ hence this subcase can't happen.

{\it Subcase 3.} $a=2.$ In this case we must have $h^{1,1}=5$ and $h^{1,2}=0.$ Just as Subcase 2, we must have $(x_i,y_i)=(4,2)$ or $(4,5).$
Assume there are $k$-pairs $(4,2)$ and $2-k$ pairs $(4,5).$ Then
\begin{eqnarray}2x_+ +y_++2x_- +y_- = -14k-17(2-k)=3k-34.\label{dq6}\end{eqnarray}
Note $x_-=x_+-8$   and $y_-=-2x_+-2k-(2-k)5=-2x_++3k-10.$ Hence
$$\begin{array}{rcl}
-K_X^3&=&48+3(3k-34)-8x^2_+ -2(x_+-8)(2x_+-3k+10)-(2x_+-3k+10)^2\\
&=&48+3(3k-34)-16(3k-10)-(3k-10)^2-16 x^2_+ +(18k-28)x_+\\
&=&6+21k-9k^2-16 x^2_+ +(18k-28)x_+.
\end{array}$$
 By the classifications of Fano threefolds with $h^{1,1}=5$ in \cite[Table \S 12.6]{ip},  we must have  $ -K^3_X=28, 36.$
However, if $k=0$  then $-K^3_X=6-16 x^2_+ -28x_+,$ hence we must have $x_+=-1$ and $-K^3_X=34.$
If $k=1$  then $-K^3_X=18-16 x^2_+ -10x_+,$ hence we must $-K^3_X\leq 12.$
If $k=2$  then $-K^3_X=12-16 x^2_+ +8x_+,$ hence we must  $-K^3_X\leq 12.$
Hence the Subcase 3 is impossible by the classifications of Fano threefolds \cite[Table \S 12.5]{ip} and \cite{mm2}.

{\it Subcase 4.} $a\geq 3.$ By the classifications of Fano threefolds in \cite[Table \S 12.6]{ip}, $X$ is of form $P^1\times S_k$ with $k\leq 5,$
where $S_k$ is the blow up of $P^2$ at $9-k$ distinct general points. Since the exceptional lines are invariant under $\C^*$-action, it means the fixed point set $X^{\C^*}$ have more than $9-5=4$ connected components, hence this subcase is also impossible.

To sum up, the case 8 is impossible.

\end{proof}

\noindent{\bf Acknowledgments}~~Part of this work was done
while the first named author's stay at Mathematical Department of Harvard University
 during 2014 and the second named author's visit at Mathematical Department of Sun Yat-Sen University during
2012. The authors are grateful to both departments for generous supports and hospitalities. They also would like to thank Chen, Dawei at Boston University and Li, Hui at Suzhou University for answering their questions.

 \end{document}